\documentclass[11pt]{amsart}

\usepackage{xcolor}
\usepackage{amsmath, amsthm, amssymb, amsfonts, mathtools, enumerate}
\usepackage{hyperref}
\usepackage{verbatim} 
\usepackage{graphicx,epsfig,color}
\usepackage{exscale}

\theoremstyle{plain}

\DeclareMathOperator{\Geo}{Geo}
\DeclareMathOperator{\TGeo}{TGeo}
\newtheorem{theorem}{Theorem}[section]
\newtheorem*{th:re}{Theorem \ref{th:re}}
\newtheorem*{1}{Theorem \ref{th:1}}
\newtheorem*{2}{Theorem \ref{th:2}}
\newtheorem*{3}{Theorem \ref{th:3}}
\newtheorem*{4}{Theorem \ref{th:4}}
\newtheorem*{5}{Theorem \ref{th:5}}

\newtheorem{lemma}[theorem]{Lemma}

\newtheorem{corollary}[theorem]{Corollary}
\newtheorem{conjecture}[theorem]{Conjecture}
\newtheorem{proposition}[theorem]{Proposition}

\theoremstyle{remark}

\theoremstyle{definition}

\newtheorem{definition}[theorem]{Definition}\newtheorem{notation}[theorem]{Notation}

\newtheorem{remark}[theorem]{Remark}

\newtheorem{example}[theorem]{Example}
\newcommand{\ninfty}{\sinfty}

\newtheorem*{jjj}{Remark}

\DeclareSymbolFont{AMSb}{U}{msb}{m}{n}
\DeclareMathSymbol{\N}{\mathalpha}{AMSb}{"4E}
\DeclareMathSymbol{\R}{\mathalpha}{AMSb}{"52}
\DeclareMathSymbol{\Z}{\mathalpha}{AMSb}{"5A}
\DeclareMathSymbol{\D}{\mathalpha}{AMSb}{"44}
\DeclareMathSymbol{\s}{\mathalpha}{AMSb}{"53}

\newcommand{\el}{{l}}

\renewcommand{\Im}{\mbox{Im}}

\newcommand{\sF}{{\scriptscriptstyle{F}}}

\newcommand{\sX}{{\scriptscriptstyle{X}}}
\newcommand{\sXi}{{\scriptscriptstyle{X_i}}}

\newcommand{\sN}{{\scriptscriptstyle N}}

\DeclareMathOperator{\vol}{vol}

\DeclareMathOperator{\spt}{spt}
\DeclareMathOperator{\supp}{supp}

\DeclareMathOperator{\de}{d}

\DeclareMathOperator{\mmm}{m}
\newcommand{\m}{{\mmm}}
\DeclareMathOperator{\ric}{ric}
\DeclareMathOperator{\diam}{diam}

\DeclareMathOperator{\Ent}{Ent}

\newcommand{\sinfty}{{\scriptscriptstyle \infty}}

\newcommand{\gcl}{{-I\times_f F}}

\newcommand{\smwp}{{\scriptscriptstyle{I\times_f M}}}
\newcommand{\smgc}{{\scriptscriptstyle I\times_f F}}
\newcommand{\slgc}{{\scriptscriptstyle -I\times_f F}}

\newcommand{\slwp}{\scriptscriptstyle{-I\times_f M}}

\newcommand{\RCD}{\mathsf{RCD}}
\newcommand{\CD}{\mathsf{CD}}
\newcommand{\TCD}{\mathsf{TCD}}
\newcommand{\MCP}{\mathsf{MCP}}
\newcommand{\TMCP}{\mathsf{TMCP}}

\DeclareMathOperator{\Ric}{ric}
\newcommand{\ol}[1]{{\overline #1}}
\title[Curvature for generalized cones and convergence]{Convergence of Lorentzian  spaces and curvature bounds for generalized cones}
\author{Christian Ketterer}
\thanks{Department of Mathematics \& Statistics, Logic House,
South Campus,
Maynooth University,
Ireland.\ \ {{\it Email address:} {\tt christian.ketterer@mu.ie}}}

\thanks{{\it 2020 Mathmatics Subject Classification.} Primary: 28A75, 51K10, 53C23, 53C50, 53B30, 53C80; \ \ Keywords: Optimal transport, Lorentzian geodesic spaces, generalized cones, curvature bounds}
\address{Department of Mathematics \& Statistics, Logic House,
South Campus,
Maynooth University,
Ireland}\email{christian.ketterer@mu.ie}
\begin{document}
\begin{abstract} The goal of this article is twofold.  We introduce  a notion of convergence for Lorentzian pre-length spaces, $\ell$-convergence, that extends previous convergence notions in this context. We show that timelike curvature and timelike curvature-dimension bounds are stable under (measured) $\ell$-convergence. 
Then, we show that $\ell$-convergence is well adapted for  generalized Lorentzian cones: a sequence of generalized cones $-I_i\times_{f_i}X_i$  converges in $\ell$ sense if the base $I_i$ and the fiber $X_i$ converge in GH sense and the functions $f_i$ converge uniformly. We use this to show sharp timelike curvature and timelike curvature-dimension bounds for such cones. Finally, we obtain a pre-compactness theorem for $\ell$-convergence in the class of smooth generalized  cones that have a uniform lower bound on the full Ricci (or Riemann) curvature tensor. 
\end{abstract}
\maketitle
\tableofcontents
\section{Introducton}
Warped product spaces play a fundamental role  in smooth and nonsmooth Riemannian and Lorentzian geometry.  They are both, model spaces for rigidity and almost rigidity theorems \cite{almostrigidity, coldingshape, beran2025, ketterer3}, and serve as a rich source of  examples and counter-examples to test geometric, analytic, stochastic, and topological properties of spaces statisfying  geometric constraints \cite{cheegercoldingI, coldingnaberII, hnw}. Such constraints  often come in the form of a lower or upper curvature bound and  it is known that curvature bounds are particularly well-behaved under warped product constructions \cite{oneillsemi, albi, albi0, albi3, ketterer2, kettererwp, kettererwp_survey, sou_cones}. 

Here we study  nonsmooth Lorentzian generalized cones. These are warped products  $-I\times_f X$  between  a  base inverval $I$,  a continuous warping function $f: I \rightarrow [0, \infty)$ and  a  fiber metric space $X$.
The minus sign  indicates the Lorentzian character of the space. For a smooth fiber space generalized cones  include the  Robertson-Walker and Friedman spacetimes that serve as cosmological models \cite{oneillsemi, albi_lorentz}.
Nonsmooth generalized cones were first defined and studied in detail  by Alexander, Graf, Kunzinger and S\"amann in \cite{agks}. They prove important  properties such as global hyperbolicity and show that generalized cones are Lorentzian pre-length spaces in the sense of \cite{kusa} (see also ) under quite general assumptions.  Moreover, they show that generalized cones have synthetic  timelike  sectional curvature bounded from below by $-K$ if  
$f$ is  smooth and $\mathcal FK$-concave,  i.e. it satisfies $f''+Kf\leq 0$, and 
$X$ is an Alexandrov space with curvature bounded from below by  $ -\inf (Kf^2 + |f'|^2)=k$. 
Calisti, S\"amann and the author   proved a  theorem for synthetic timelike Ricci curvature lower bounds \cite{cks}: If $X$ is a non-branching metric measure space that satisfies the curvature-dimension condition $\CD((N-1) k,N)$ then the generalized $N$-cone $-I\times^N_f X$ satisfies the measure contraction property $\TMCP(NK,N+1)$. 
The parameter $N$ in $N$-cone refers to reference measure $f^N \de t \otimes \de \m_\sX$. 

Synthetic timelike sectional or Ricci curvature bounds such as  timelike sectional curvature bounded below $TCBB(K)$ \cite{kusa, bkar, bkr}, the timelike entropic curvature-dimension condition $\CD^e_p(K,N)$ and the timelike measure contraction property for Lorentzian prelength space \cite{mccann_lorentz, mondinosuhr, camolorentz} (see also \cite{braun_renyi})  have  attracted a lot of interest in recent years.  These conditions in combination with the theory of Lorentzian length spaces provide 
  frameworks for  nonsmooth extensions of general relativity. Through Einstein's field equations the condition $\CD_p^e(0,N)$ for a smooth spacetime  is equivalent to the strong energy condition that  is crucial in numerous fundamental results about the causal behaviour  of spacetimes  such as the Penrose-Hawking singularity theorems and the Lorentzian splitting theorem. 
Curvature bounds for Lorentzian pre-length spaces also mirror the  theory of synthetic lower sectional and Ricci curvature bounds for metric and metric measure spaces \cite{bbi, bgp, stugeo1, stugeo2, lottvillani}. 

While the literature on Lorentzian length spaces satisfying such curvature conditions has grown substantially in recent years, examples beyond those arising from smooth manifolds remain scarce.
The theorems below  enlarge the current collection of examples by introducing new classes of genuinely nonsmooth Lorentzian spaces satisfying timelike curvature or curvature-dimension bounds.

\smallskip
\paragraph{\bf A. ${\boldsymbol \ell}$-convergence} 

An immediate question for the class of nonsmooth Lorentzian spaces is the identification of a natural analogue of the Gromov–Hausdorff topology for metric spaces that allows for a robust comparison of nonsmooth spacetimes, independent of smoothness assumptions and compatible with curvature bounds. Several proposals have appeared in recent years \cite{camolorentz, ms_gh, om, bs, ms,bms,burtscher_allen, ss, sormani_vega, csp}.  However, available results can  depend on  assumptions that must be checked case by 
case, and the stability of curvature bounds is often not the main focus. 

We will introduce a new  notion of convergence for the class of Lorentzian pre-length spaces (Section \ref{sec:ell}). The main idea roughly is to say that a sequence of signed time separation functions $\ell_i$ associated to Lorentzian pre-length spaces $Y_i$ converges if the $\ell_i$s converge {\it uniformly} as functions on the sequence of product spaces $Y_i \times Y_i$ (Definition \ref{def:uniform_convergence}). For this we require that distance functions $\de_i$  which induce the topology on $Y_i$ can be chosen such  that the sequence $(Y_i, \de_i)$ converges in a GH-type sense (see Definition \ref{def:coveredGH}).  
We call this {\bf ${\boldsymbol\ell}$-convergence} (see Definition \ref{def:ellconvergence}), and measured $\ell$-convergence if there are also weakly converging measures involved.  For the precise definition we assume the existence of  coverings and a condition we call uniformly non-totally imprisoning (Definition \ref{def:uni}). In Section \ref{sec:stability}
we show that timelike curvature and timelike curvature-dimension bounds are stable under (measured) $\ell$-convergence (Theorem \ref{th:stabilityTCBB}, Theorem \ref{th:stabilityTCD}, Theorem \ref{th:stabilityTMCP}).  The notion of $\ell$-convergence includes the convergence type that was used to show stability of the curvature-dimension condition in \cite{camolorentz} (see also \cite{braun_renyi}). Moreover, it  covers uniform convergence of a sequence of  Lorentzian spacetimes  w.r.t. a fixed Riemannian metric on the underlying manifold (Theorem \ref{th:uniform}). Most importantly, we show that a sequence of generalized cones converges in $\ell$-sense if the sequence of bases and the sequence of fibers converge in GH sense and the sequence of warping functions $f$ converges uniformly (Theorem \ref{th:convergencecones}). In fact, $\ell$-convergence is directly motivated by the desire to define a convergence for Lorentzian generalized cones that has this property. 
\smallskip
\paragraph{\bf B. Cuvature of generalized cones}
The second goal of this paper is to employ $\ell$-convergence to establish timelike curvature and curvature-dimension bounds for generalized Lorentzian cones (Section \ref{sec:curcon}).

First, we  extend the previously mentioned theorem by Alexander-Graf-Kunzinger-S\"amann 
 to the case of non-smooth $\mathcal FK$-concave functions.  

\begin{theorem}\label{th:1}
Let $I$ be an open interval,  let $f:  I\rightarrow (0, \infty)$ be continuous, and 
assume $f$ is $\mathcal FK$-concave.  Set $ - \inf_I \{Kf^2 + |\partial f|^2\}=K_f$ and
assume $X$ is an Alexandrov space with curvature bounded from below by $K_f$. Then $-I\times_f X$ has timelike curvature bounded from below $-K$. 
\end{theorem}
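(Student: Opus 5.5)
The plan is to approximate and then pass to the limit, using the stability results for $\ell$-convergence announced above. We reduce to the smooth case, which is the theorem of Alexander--Graf--Kunzinger--S\"amann \cite{agks}, and then apply Theorem \ref{th:convergencecones} together with Theorem \ref{th:stabilityTCBB}.

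\textbf{Step 1: reduce to compact subintervals.} Timelike curvature bounds are local statements, namely triangle comparison in comparison neighbourhoods. So it is enough to treat $-J\times_f X$ for a relatively compact open interval $J\Subset I$. The restriction $f|_J$ is still $\mathcal FK$-concave, and $\inf_J\{Kf^2+|\partial f|^2\}\ge \inf_I\{\cdots\}$. Hence $X$ still has curvature bounded below by the (possibly smaller) constant associated with $f|_J$.

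\textbf{Step 2: smooth $f$ while preserving $\mathcal FK$-concavity.} We want smooth $f_i:J\to(0,\infty)$ with the following properties:
\begin{itemize}
\item $f_i''+Kf_i\le 0$;
\item $f_i\to f$ uniformly on $J$;
\item $\inf_J\{Kf_i^2+|f_i'|^2\}\ge \inf_J\{Kf^2+|\partial f|^2\}-\epsilon_i$ with $\epsilon_i\to 0$.
\end{itemize}
For $K=0$, mollification $f_i=f*\rho_i$ preserves concavity, and $|f_i'|$ is controlled by the one-sided derivatives of $f$. For general $K$, the natural tool is the ODE comparison characterisation of $\mathcal FK$-concavity: locally $f=a\,\sin_K+b\,\cos_K$-type comparison functions dominate $f$. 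We mollify in a way adapted to this, for instance by writing $f$ locally in the form $\phi\cdot h$ with $h$ a positive solution of $h''+Kh=0$, which turns the condition into ordinary concavity of a transformed function, and then convolving. The quantity $Kf^2+|f'|^2$ is the "energy" of the comparison ODE, and it is monotone in the appropriate sense along $\mathcal FK$-concave functions. This lets us control its infimum after smoothing, up to the additive error $\epsilon_i$.

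\textbf{Step 3: pass to the limit.} Because of the $\epsilon_i$ loss, $X$ is only known to have curvature $\ge K_f \ge K_{f_i}+\epsilon_i$, not exactly $\ge K_{f_i}$. We absorb this by perturbing the curvature parameter, replacing $K$ by $K+\delta_i$ with $\delta_i\to0$, and slightly modifying $f_i$ so that its energy infimum meets $X$'s bound. Alternatively, we allow the fiber bound to be $K_{f_i}$ by lowering $K$ to $K-\delta_i$, using that an Alexandrov bound $k$ implies every bound $k'\le k$; the ordering of the inequalities must be checked. Once this is arranged, \cite{agks} gives that $-J\times_{f_i}X$ has timelike curvature bounded below by $-(K\pm\delta_i)$. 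Theorem \ref{th:convergencecones}, with fixed base $J$, fixed fiber $X$ and $f_i\to f$ uniformly, then gives $\ell$-convergence $-J\times_{f_i}X\to -J\times_f X$. Theorem \ref{th:stabilityTCBB} transfers the bound $-K$ to the limit. Finally, we globalise over $J\Subset I$.

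\textbf{Main obstacle.} The hard part is Step 2/3: approximating a merely continuous $\mathcal FK$-concave $f$ by smooth ones that keep both the concavity and the fiber-curvature compatibility $k\le -\inf(Kf_i^2+f_i'^2)$. This compatibility can be lost at points where $f$ has a corner, since $|\partial f|$ there should be read as the minimal one-sided derivative. My first attempt is the energy-monotonicity argument for the comparison ODE, with the small curvature perturbation $\delta_i$ to absorb the errors. A secondary check is that \cite{agks} applies to the Alexandrov fiber exactly as stated, without compactness issues, which is why we restrict to bounded $J$.
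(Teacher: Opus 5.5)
Your overall architecture is the paper's: smooth $\mathcal FK$-concave approximants, Theorem~\ref{th:agks}, then Theorems~\ref{th:convergencecones} and~\ref{th:stabilityTCBB} on relatively compact subintervals. But Step~3 has a genuine gap: you never produce approximating cones that satisfy the hypotheses of Theorem~\ref{th:agks}. Write $G_i=Kf_i^2+(f_i')^2$ and $K_{f_i}:=-\inf_J G_i$. Smoothing only gives $K_{f_i}\le K_f+\epsilon_i$, not $K_f\ge K_{f_i}+\epsilon_i$ as you wrote (that version would need no fix). So the fixed fiber $X$ falls short by $\epsilon_i$, and neither of your remedies closes this.

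\emph{Lowering the parameter to $K-\delta_i$} goes the wrong way. $f_i$ is then $\mathcal F(K-\delta_i)$-concave, but the required fiber bound becomes $-\inf_J\{G_i-\delta_i f_i^2\}\ge K_{f_i}$. The demand on $X$ only grows, so monotonicity of Alexandrov bounds is of no use.

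\emph{Raising it to $K+\delta_i$} would give slack $\delta_i\min_J f_i^2$. However, it needs $f_i''+(K+\delta_i)f_i\le0$, which mollification cannot produce wherever $f$ is $\mathcal FK$-affine. ``Slightly modifying $f_i$'' to force it is exactly the unproved step, since the modification also moves the energy.

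The missing idea is to create the slack on the fiber side. Theorem~\ref{th:convergencecones} allows this because the fibers may vary in pointed GH sense. The paper replaces $X$ by a rescaled copy $X_\epsilon$ with curvature bound to spare, applies Theorem~\ref{th:agks} to $-I^k\times_{f_i}X_\epsilon$, and passes to a diagonal sequence $-I\times_{f_{i_n}}X_{1/n}\to -I\times_f X$. Equivalently, keep $X$ and use $\lambda_i f_i$ with constants $\lambda_i\to1$. This preserves $f''+Kf\le0$ exactly and multiplies the energy by $\lambda_i^2$. Hence it absorbs an additive error whenever $K_f\neq0$: take $\lambda_i<1$ if $K_f>0$ and $\lambda_i>1$ if $K_f<0$.

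When $K_f=0$, scaling gives nothing and you need $\inf_J G_i\ge0$ directly:
\begin{itemize}
\item for $K>0$ this holds for large $i$, since $\inf_{\bar J}G\ge K\min_{\bar J}f^2>0$;
\item for $K=0$ it is trivial;
\item for $K<0$, $G\ge0$ excludes interior local extrema of $f$, because there $|\partial f|=0$ and $G=Kf^2<0$. So $f$ is strictly monotone, say increasing, with $f'\ge\sqrt{-K}\,f$ a.e., a linear inequality preserved by convolution.
\end{itemize}

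Step~2 is also easier than you expect. Since $K$ is constant, $(f*\rho)''+K(f*\rho)=(f''+Kf)*\rho\le0$, so plain mollification preserves $\mathcal FK$-concavity exactly. The estimate $\liminf_i\inf_{\bar J}G_i\ge\inf_{\bar J}G$ then follows, as in the paper's corollary, from lower semicontinuity of the local slope under uniform convergence of uniformly semiconcave functions \cite{petsem}. No ODE transformation $f=\phi h$ is needed.
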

\noindent
Here, $|\partial f|$ denotes the local slope of the function $f$.  A function $f: I\rightarrow (0,\infty)$ is said to be $\mathcal FK$-concave ($\mathcal FK$-convex) if  $f''+Kf\leq 0$ ($f''+ Kf \geq 0$) holds in the distributional sense on $I$ \cite{albi}.

The theorem is the optimal Lorentzian version of  the corresponding theorem for Alexandrov spaces with curvature bounded from below by Alexander and Bishop \cite{albi}.  The conditions  are sharp in the sense that if $-I\times_fX$ satisfies the timelike curvature bound and if $f$ is $\mathcal FK$-affine ($\mathcal FK$-concave and $\mathcal FK$-convex) then $X$ has curvature bounded from below by $K_f$ \cite{agks}. However, unlike in the metric case the assumptions are not implied by a timelike curvature bound for the cone in general (Remark \ref{rem:contra}). Convergence of generalized cones and null distances was  studied in \cite{kust_null}. 

If the fiber space $X$ is a Ricci limit space, we show the following theorem for the timelike curvature-dimension condition. 
\begin{theorem}\label{th:2}
Let $f:  I\rightarrow (0, \infty)$ be as before,  and $p\in (0,1)$. 
Assume $X$ is the Gromov-Hausdorff limit of Riemannian manifolds $M_i^n$ with $\ric^{M_i}\geq (n-1) K_f$. Then $-I\times_f X$ satisfies the timelike entropic curvature-dimension conditin $\TCD_p^e(nK, n+1)$. 
\end{theorem}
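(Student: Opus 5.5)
The strategy is to approximate the cone $-I\times_f X$ by smooth Lorentzian warped products and then pass to the limit using the stability of $\TCD^e_p$ under measured $\ell$-convergence (Theorem \ref{th:stabilityTCD}) together with the convergence of generalized cones (Theorem \ref{th:convergencecones}). The smooth building blocks are $-I\times_{f_j} M_i$, where $M_i^n$ are the given manifolds with $\ric^{M_i}\geq (n-1)K_f$ and $f_j$ are smooth approximations of $f$. For a smooth spacetime, $\TCD^e_p(nK,n+1)$ follows from the timelike Ricci bound $\ric(v,v)\geq nK\,g(v,v)$ for timelike $v$ (reading the sign convention as in \cite{mccann_lorentz, mondinosuhr}). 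Here the dimension is $n+1$ and the reference measure is the volume, i.e.\ $f^n\,\de t\otimes\de\vol_{M_i}$.

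\textbf{Step 1: smoothing the warping function.} On a compact subinterval $I'\Subset I$, I would mollify $f$ to obtain smooth $f_j>0$ with $f_j\to f$ uniformly on $I'$. I would arrange that $f_j$ is still $\mathcal F K_j$-concave with $K_j\to K$; standard convolution preserves $f''+Kf\le 0$ up to an error that vanishes as $j\to\infty$. I would also arrange $-\inf(K_jf_j^2+|f_j'|^2)\le K_f+\epsilon_j$ with $\epsilon_j\to 0$. This bound on $|f_j'|$ is the delicate point. Mollification makes $|f_j'|$ at most the supremum of $|f'|$ over a small neighbourhood, and the local slope $|\partial f|$ is upper semicontinuous in the appropriate sense for semiconcave $f$. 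Hence the infimum of $Kf^2+|\partial f|^2$ is asymptotically respected. If needed, I would also rescale the fibre slightly to absorb the $\epsilon_j$ error.

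\textbf{Step 2: the smooth curvature computation.} I would use O'Neill's warped product formulas for $-I\times_{h}M$ with $\dim M=n$. For the unit timelike direction $\partial_t$ one has $\ric(\partial_t,\partial_t)=-n h''/h$, so $h''+Kh\le0$ gives $\ric(\partial_t,\partial_t)\ge nK$. For a unit fibre vector $v$ one has
\[
\ric(v,v)=\frac{\ric^M(v,v)}{h^2}+\frac{h''}{h}+(n-1)\frac{h'^2}{h^2}.
\]
Using $\ric^M\ge (n-1)k$ with $k\ge -(Kh^2+h'^2)$ and $h''\le -Kh$, this yields $\ric(v,v)\ge -nK$. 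Since $g(v,v)=+1$, and the mixed terms vanish, this is the required bound $\ric\geq nK\,g$ in all timelike directions. Interpolating along a general timelike vector requires the usual combination. I expect the mixed case to reduce to these two inequalities, as in the Riemannian computation of \cite{kettererwp}. Hence each $-I'\times_{f_j}M_i$ satisfies $\TCD^e_p(nK_j-\delta_j,n+1)$ with $\delta_j\to 0$, using the smooth results of \cite{mccann_lorentz, mondinosuhr}. Some care is needed because $I'$ has boundary and the space is not globally hyperbolic up to the boundary. I would handle this by working with the causally convex interior, or by noting that generalized cones over compact intervals are globally hyperbolic in the sense of \cite{agks}.

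\textbf{Step 3: passing to the limit.} By Theorem \ref{th:convergencecones}, since $M_i\to X$ in GH sense (measured, with renormalized volumes converging to the limit measure $\m_X$) and $f_j\to f$ uniformly, a diagonal sequence $-I'\times_{f_j}M_{i(j)}$ converges in the measured $\ell$-sense to $-I'\times_f X$ with measure $f^n\de t\otimes\m_X$. Theorem \ref{th:stabilityTCD} then yields $\TCD^e_p(nK,n+1)$ for $-I'\times_fX$. Exhausting $I$ by compact subintervals, and using that the condition is local in the appropriate sense or is stable under increasing unions, gives the claim on $I$. I expect the main obstacle to be Step 1, namely producing smooth approximants that simultaneously control concavity and the slope bound defining $K_f$ uniformly. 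A secondary difficulty is verifying the technical hypotheses of $\ell$-convergence, such as uniform non-total imprisonment, for the diagonal sequence; these should follow from the hypotheses of Theorem \ref{th:convergencecones}.
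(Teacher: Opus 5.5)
Your overall strategy matches the paper's: smooth $\mathcal FK$-concave approximants of $f$, a Ricci bound for the smooth cones, and then Theorem \ref{th:convergencecones} together with Theorem \ref{th:stabilityTCD}. The gap is in Step 3, which overstates what the stability theorem delivers. Theorem \ref{th:stabilityTCD} turns $\TCD^e_p(K,N)$ along the sequence into only the \emph{weak} condition $w\TCD^e_p(K,N)$ in the limit. Its proof treats only strongly timelike $p$-dualisable pairs. It needs the $\ell_p$-cyclically monotone set $\Gamma\subset\{\ell_\sinfty>0\}$ to ensure that the limit of the approximating optimal couplings is again chronological, since a limit of chronological plans may charge $\{\ell_\sinfty=0\}$. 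As written, your argument therefore proves $w\TCD^e_p(nK,n+1)$, not the statement.

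The missing idea is nonbranching. $X$ is a GH limit of $n$-manifolds with a lower Ricci bound, so it is nonbranching \cite{coldingnaberI}. By fibre independence (Theorem \ref{th:fiber_ind}), maximal timelike curves have minimizing fibre components and base components determined by the fibre length, so $-I\times_f X$ is timelike nonbranching. On such spaces Braun's equivalence \cite{braun_renyi} upgrades $w\TCD^e_p$ to $\TCD^e_p$. This is where the Ricci-limit hypothesis is used beyond supplying smooth approximants.

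Step 1, which you flag as the delicate point, is argued in the wrong direction. To get $-\inf(Kf_j^2+|f_j'|^2)\le K_f+\epsilon_j$ you need a pointwise \emph{lower} bound on $Kf_j^2+|f_j'|^2$, hence on $|f_j'|$. The upper bound $|f_j'(t)|\le\sup_{|s-t|\le\delta}|f'(s)|$ gives nothing here. Moreover, the slope of a semiconcave function is lower, not upper, semicontinuous: $f(t)=1-|t|$ has $|\partial f|(0)=0$ but $|\partial f|=1$ nearby.

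The correct ingredient is the corollary of Petrunin's lemma in Section \ref{sec:curcon}: $\liminf_i\inf_{\ol I}G_i\ge\inf_{\ol I}G$ for uniformly converging $\lambda$-concave functions. Concretely, $f'$ is essentially nonincreasing, so the averaged derivative $f_j'(t)$ either dominates $|\partial f|$ at a nearby point, or $f'$ changes sign near $t$ at a point where $|\partial f|=0$. Concavity is essential: mollifying $1+|t|$ gives $f_j'(0)=0<1=|\partial f|(0)$. The residual error is then absorbed by rescaling the fibre, as in the proof of Theorem \ref{th:1}.

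Two minor points. In Step 2, fibre vectors are spacelike; what you actually derive is the full bound $\ric(w,w)\ge -nK\,g(w,w)$, i.e.\ the ``if'' part of Proposition \ref{prop:ricciwp}, which is fine. The exhaustion by compact $I'$ is unnecessary, since Theorem \ref{th:convergencecones} already works with the proper covers $I^k\times B_{2^k}(o)$ of the whole cone.
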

\noindent
This theorem is the Lorentzian version of the corresponding theorem in the context of spaces with synthetic lower Ricci curvature bounds \cite{kettererwp, ketterer2}. Here, the fiber space is a Ricci limit space. It is conjectured that a statement where the fiber space is a $\CD$ metric measure spaces will hold \cite{kettererwp_survey}.
A partial result in this direction for $f$ smooth was obtained in \cite{cks} and  has the following generalization for nonsmooth $f$.
\begin{theorem}\label{th:3}
Let $f:  I\rightarrow (0, \infty)$ be as before. 
Assume $X$ is a nonbranching $\CD((n-1)K_f, n)$ space. Then $-I\times_f X$ satisfies the timelike entropic measure contraction property $\MCP^e(nK, n+1)$. 
\end{theorem}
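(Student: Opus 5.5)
The plan is to reduce Theorem \ref{th:3} to the smooth-warping-function case treated in \cite{cks}, and then pass to the limit using measured $\ell$-convergence. By \cite{cks}, if $f$ is smooth and $\mathcal FK$-concave, and $X$ is a nonbranching $\CD((n-1)k,n)$ space with $k=-\inf(Kf^2+|f'|^2)$, then the generalized $n$-cone $-I\times^n_f X$ satisfies $\TMCP(nK,n+1)$.

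\textbf{Step 1: smooth approximation of $f$.} Given continuous $\mathcal FK$-concave $f$ on the open interval $I$, exhaust $I$ by compact subintervals $J_i\Subset I$. On each $J_i$, approximate $f$ uniformly by smooth functions $f_i>0$ that are $\mathcal F K_i$-concave, with $K_i\to K$. I would mollify and then correct: mollification preserves the distributional inequality $f''+Kf\le0$ up to an error that vanishes locally uniformly, and a small change in $K$ absorbs this error.

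The essential point is the curvature of the fiber. We need $k_i:=-\inf_{J_i}(K_if_i^2+|f_i'|^2)\le K_f+\varepsilon_i$ with $\varepsilon_i\to0$, so that $X$ is (approximately) $\CD((n-1)k_i,n)$. This requires $\limsup_i \sup_{J_i}\bigl(-K_if_i^2-|f_i'|^2\bigr)\le -\inf_I(Kf^2+|\partial f|^2)$, which amounts to lower semicontinuity of $Kf^2+|\partial f|^2$ along the mollification. I expect this to be the main obstacle. For $\mathcal FK$-concave $f$ the quantity $Kf^2+|f'|^2$ is monotone on the regions of monotonicity of $f$, since formally $(Kf^2+f'^2)'=2f'(f''+Kf)$. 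One-sided derivatives exist everywhere and $|\partial f|=\max(|f'_+|,|f'_-|)$. Mollified derivatives are averages of $f'$, so the monotonicity identity should give $\inf_{J_i}(K_if_i^2+|f_i'|^2)\ge\inf_I(Kf^2+|\partial f|^2)-o(1)$. I would first check this in the case $K=0$, where $f$ is concave and $|f_i'|$ is controlled by one-sided slopes at the endpoints of the mollification window.

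Since $\CD((n-1)K_f,n)$ implies $\CD((n-1)(K_f-\varepsilon),n)$, one can alternatively perturb $K$ slightly, replacing $K$ by $K+\delta$ and $f$ by a slightly modified function. This yields $\TMCP(n(K-\delta_i),n+1)$ for the smooth cones $-J_i\times^n_{f_i}X$ with $\delta_i\to0$.

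\textbf{Step 2: convergence and stability.} By Theorem \ref{th:convergencecones}, with fixed fiber $X$ and bases $J_i\to I$ (or, working locally, a fixed compact base), the uniform convergence $f_i\to f$ gives $\ell$-convergence of $-J_i\times_{f_i}X$ to $-I\times_fX$. The reference measures $f_i^n\,\de t\otimes\m_X$ converge weakly to $f^n\,\de t\otimes \m_X$, so this is measured $\ell$-convergence. Theorem \ref{th:stabilityTMCP} then transfers $\TMCP(n(K-\delta_i),n+1)$ to the limit, and continuity of the condition in the curvature parameter (via monotonicity in $K$) gives $\TMCP(nK,n+1)$ for $-I\times_fX$.

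One further point needs checking. $\TMCP$ is local in time, and the stability theorem may require compactness or a uniform non-totally-imprisoning condition. I would therefore restrict to diamonds $J(x,y)$ with $x,y$ inside $J_{i_0}\times X$ and argue on these, using global hyperbolicity of the cones from \cite{agks}.
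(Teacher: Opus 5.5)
Your architecture matches the paper's: smooth $\mathcal FK$-concave approximations $f_i\to f$, the smooth-warping result of \cite{cks}, then Theorem \ref{th:convergencecones} and Theorem \ref{th:stabilityTMCP}. Your monotonicity argument for $\liminf_i\inf_J G_i\ge\inf G$, with $G=Kf^2+|\partial f|^2$, is a reasonable substitute for the paper's use of Petrunin's lemma on slopes of semiconcave functions.

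\textbf{The gap is in the next step.} Lower semicontinuity only gives $k_i\le K_f+\varepsilon_i$. But \cite{cks} needs $X$ to be exactly $\CD((n-1)k_i,n)$, which may be a slightly \emph{stronger} bound than the given $\CD((n-1)K_f,n)$. The monotonicity you invoke, $\CD((n-1)K_f,n)\Rightarrow\CD((n-1)(K_f-\varepsilon),n)$, points the wrong way: it weakens the fiber bound, and you need to strengthen it.

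Perturbing $K$ does not fix this as stated. The map $K'\mapsto-\inf_J(K'f_i^2+|f_i'|^2)$ is nonincreasing. So weakening the cone constant to $K-\delta$, which is what your conclusion $\TMCP(n(K-\delta_i),n+1)$ reflects, makes the fiber requirement harder. The ``slightly modified function'' that would make $f$ $\mathcal F(K+\delta)$-concave is never specified. On the concavity side nothing needs perturbing anyway, since $(f*\rho)''+K(f*\rho)=(f''+Kf)*\rho\le0$.

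\textbf{The missing idea is to rescale the fiber.} The paper does this in the proof of Theorem \ref{th:1} and transfers that proof verbatim to this theorem.
\begin{itemize}
\item If $X$ is nonbranching $\CD((n-1)K_f,n)$, then $\lambda X=(X,\lambda\de_X,\m_X)$ is nonbranching $\CD((n-1)\lambda^{-2}K_f,n)$.
\item Choose $\lambda_i\to1$ from below if $K_f>0$ and from above if $K_f<0$, with $(\lambda_i^{-2}-1)K_f\ge\varepsilon_i$. Then $\lambda_i^{-2}K_f\ge k_i$, so \cite{cks} applies to $-J\times^n_{f_i}\lambda_iX$ with the unchanged constant $nK$.
\item Since $\lambda_iX\to X$ in pointed measured GH sense, Theorem \ref{th:convergencecones} still gives measured $\ell$-convergence to $-I\times^n_fX$. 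Equivalently, $-I\times_f\lambda X$ and $-I\times_{\lambda f}X$ carry the same $\ell$, and $\lambda_if_i\to f$ locally uniformly.
\end{itemize}

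If $K_f=0$, rescaling gives no slack, and you must show $\inf_JG_i\ge0$ exactly. This is trivial for $K\ge0$. For $K<0$ the condition forces $f$ to be monotone with $|f'|\ge\sqrt{-K}f$ a.e., and mollification preserves this inequality.

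Keeping $K$ fixed this way also removes your last step, where monotonicity in $K$ is again used in the wrong direction. Knowing $\TMCP(n(K-\delta_i),n+1)$ for all $i$ yields $\TMCP(nK,n+1)$ only by letting $\delta_i\to0$ inside the stability argument, using continuity of $\sigma^{(t)}_{K,N}$ in $K$.
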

\smallskip
\paragraph{\bf Pre-compactness}
In the category of smooth generalized cones, i.e. when $f$ is a smooth function and $X$ is a smooth, complete Riemannian manifold, such that there is lower bound on the {\it full Ricci tensor} of the spacetime, we show a pre-compactness theorem. 
\begin{theorem}\label{th:4}Let $\mathcal M$ be the class of smooth, complete Riemannian manifolds. We define
$$\mathcal Y_{K,N}^{\ninfty}= \{ Y= -I \times_f X: f\in C^2(I),   X\in \mathcal M,\ric^Y\geq K, \dim Y= N \}.$$
The class $\mathcal Y^{\ninfty}_{K,N}$ is precompact w.r.t. measured $\ell$-convergence.
Moreover, any limit space $Y\in \overline{\mathcal Y^{\ninfty}_{K,N}}^{\m\ell}$ satisfies the   condition $\TCD^e_p(K,N)$. 
\end{theorem}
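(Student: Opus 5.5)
The plan is to reduce precompactness to precompactness of the three ingredients: the base interval $I$, the warping function $f$, and the fiber $X$. Theorem \ref{th:convergencecones} says a sequence of generalized cones $\ell$-converges once the bases and fibers converge in GH sense and the warping functions converge uniformly. I would proceed in the following order.

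\emph{Curvature of the ingredients.} First I translate the bound $\ric^Y\geq K$ for $Y=-I\times_f X$, $\dim Y=N$, $\dim X=n=N-1$, into conditions on $f$ and $X$ via O'Neill's warped product formulas. Testing on the timelike unit vector $\partial_t$ gives $\ric(\partial_t,\partial_t)=-n f''/f$. With the sign conventions of the paper this yields $f''+\frac{K}{n}f\leq 0$, i.e. $f$ is $\mathcal F(K/n)$-concave. Testing on a unit horizontal vector $v$ gives
\[
\ric^Y(v,v)=\frac{\ric^X(v,v)}{f^2}+\frac{f''}{f}+(n-1)\frac{|f'|^2}{f^2}\geq K.
\]
Combining this with the concavity, as in \cite{albi, kettererwp}, gives $\ric^X\geq (n-1)K_f$ with $K_f=-\inf(\frac{K}{n}f^2+|f'|^2)$. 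The lower bound $\ric^{X}\ge(n-1)K_f$ needs care in the definition of $K_f$. Equivalently, I would simply extract the constant $\inf_t\big(f^2(K - f''/f) - (n-1)|f'|^2\big)$ and show it stays bounded along the sequence.

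\emph{Compactness of the ingredients.} Given a sequence $Y_i=-I_i\times_{f_i}X_i$, I pass to subsequences in three places.
\begin{itemize}
\item The intervals $I_i$ converge after normalising, using the pointed setting on compact subintervals.
\item The concave-type functions $f_i$ are locally uniformly Lipschitz on compact subintervals, because $\mathcal F K$-concavity bounds the slopes in the interior. Arzelà–Ascoli then gives a uniformly convergent subsequence, and the limit $f$ is still $\mathcal F(K/n)$-concave, since the distributional inequality is closed under uniform convergence.
\item The fibers $(X_i,\mathrm{vol})$ have uniform Ricci lower bounds once the constants $K_{f_i}$ are bounded. The bound on $K_{f_i}$ follows from a uniform local bound on $|f_i'|$ and on $f_i$, after rescaling the fiber metric together with $f_i$ to normalise, say, $f_i(t_0)=1$; note that $-I\times_{f}X \cong -I\times_{\lambda f}(\lambda^{-1}X)$. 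By Gromov precompactness, the normalised pointed measured manifolds then subconverge in pmGH sense to a Ricci limit space.
\end{itemize}
Applying Theorem \ref{th:convergencecones}, with the measured version obtained from weak convergence of $f_i^n\de t\otimes \mathrm{vol}_{X_i}$, produces a measured $\ell$-convergent subsequence.

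\emph{Curvature of the limit.} The limit is $-I\times_f X$ with $X$ a Ricci limit space with $\ric\geq (n-1)K_f$ and $f$ being $\mathcal F(K/n)$-concave. So Theorem \ref{th:2}, with $K$ replaced by $K/n$, gives $\TCD^e_p(K,N)$. Alternatively, the stability result Theorem \ref{th:stabilityTCD} applies directly, since each smooth $Y_i$ satisfies $\TCD^e_p(K,N)$ by the smooth characterisation. This also covers limits in the closure $\overline{\mathcal Y^{\ninfty}_{K,N}}^{\m\ell}$ that are not themselves obtained by the first route.

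\emph{Expected obstacle.} The main difficulty will be the non-compactness and degeneracy issues. Intervals may shrink or be unbounded, $f_i$ may blow up or tend to zero at the endpoints, and $\inf_I|f_i'|^2$-type quantities may be unbounded near the endpoints. These make the fiber curvature bound non-uniform. I would first work on compact subintervals $J\Subset I$ and use the pointed/local form of $\ell$-convergence. I would use the rescaling freedom above to normalise, and use concavity to bound $|f_i'|$ on $J$ in terms of the values of $f_i$ on a slightly larger interval. I would also need to check that the definition of $\ell$-convergence, including uniform non-total imprisonment and coverings, is compatible with this exhaustion. A diagonal argument then finishes the proof.
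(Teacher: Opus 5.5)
Your precompactness argument matches the paper's. You translate $\ric^Y\ge K$ through O'Neill's formulas (Proposition~\ref{prop:ricciwp}) into $\mathcal F$-concavity of $f$ plus a fiber Ricci bound, and normalise $f_i(\bar t_i)=1$ via $-I\times_f X\simeq -I\times_{\lambda f}\lambda^{-1}X$. Concavity gives local Lipschitz bounds on $f_i$; the paper gets these from the Riccati inequality for $\log f_i$. Arzel\`a--Ascoli then applies, the same bounds give a uniform $\ric^{X_i}\ge -(n-1)C'(k)$, Gromov compactness handles the fibers, and Theorem~\ref{th:convergencecones} concludes. One small slip: in the paper's convention $\ric^Y\ge K$ means $\ric^Y(v,v)\ge -K$ for unit spacelike $v$. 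Only with that sign does your horizontal formula combine with $f''+\frac Kn f\le 0$ to give $\ric^X\ge (n-1)K_f$.

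\textbf{Gap in the last step.} The statement claims the full condition $\TCD^e_p(K,N)$ for limits. Theorem~\ref{th:stabilityTCD} only yields $w\TCD^e_p(K,N)$, because its proof handles only strongly timelike $p$-dualisable pairs. So your ``alternative'' route stops short.

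The paper closes this with an argument you do not mention:
\begin{itemize}
\item the limit fiber $X_\infty$ is a Ricci limit of $n$-manifolds, hence non-branching \cite{coldingnaberI};
\item by fiber independence (Theorem~\ref{th:fiber_ind}) the limit cone is timelike non-branching;
\item under non-branching, Braun's result \cite{braun_renyi} identifies $w\TCD^e_p$ with $\TCD^e_p$.
\end{itemize}

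Your primary route via Theorem~\ref{th:2} would inherit this upgrade from that theorem's proof. But it needs the hypothesis with the sharp constant of the limit warping function: $X_\infty$ must be a limit of $n$-manifolds with $\ric\ge (n-1)K_{f_\infty}$, where $K_{f_\infty}=-\inf\{\frac Kn f_\infty^2+|\partial f_\infty|^2\}$. You assert this without proof. It amounts to $\liminf_i K_{f_i}\ge K_{f_\infty}$, which is the reverse of the semicontinuity the paper proves. The paper's corollary gives $\liminf_i \inf G_i\ge \inf G$, i.e.\ $\limsup_i K_{f_i}\le K_{f_\infty}$.

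The direction you need can be obtained from the intermediate value theorem for the continuous $f_i'$ near a point where $G_\infty$ is almost minimal. Even then you only get $\ric^{X_i}\ge (n-1)(K_{f_\infty}-\epsilon_i)$, which does not literally match Theorem~\ref{th:2}; for $K_{f_\infty}=0$ no rescaling absorbs the defect.

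The simplest repair is the paper's. Each smooth $Y_i$ already satisfies $\TCD^e_p(K,N)$, so apply stability to the sequence itself. Then use the non-sharp uniform fiber bound only for Gromov compactness and for non-branching of the limit.
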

Here, a generalized cone is equipped with the reference measure $f^{N-1}\de t\otimes \vol_X$. We call this an $(N-1)$-cone.

 This theorem is of interest since it only requires a curvature assumption for the full Ricci tensor of the  spacetime, but not a curvature bound for the fiber space (see Subsection \ref{subsec:curv_smooth} for the definition of $\ric^M\geq K$ and also $R^M\geq K$; the latter was introduced \cite{albi_lorentz}). A  pre-compactness theorem for more general smooth spacetimes was also obtained in \cite{ms_gh} but with different  curvature assumptions. 

The theorem can be used to define  tangent cones in a point $p\in Y$ for $Y\in \overline{\mathcal Y^{\ninfty}_{K,N}}^{\m\ell}$ (Subsection \ref{subsec:tangentcone}), and it can be used to show an almost splitting theorem in the class of cones with (full) Ricci curvature bounded from below (Subsection \ref{subsec:almost}).

Unlike for timelike lower Ricci curvature bounds a lower bound on the full Ricci tensor of a Lorentzian spacetime is not implied by a lower bound on the  Riemann tensor $R$ (Subsection \ref{subsec:curv_smooth}). But we still  have the following theorem. 

\begin{theorem}\label{th:5} The class
$$\{ Y= -I \times_f X: f\in C^2(I),   X\in \mathcal M, R^Y\geq K, \dim Y\leq N \}$$is precompact w.r.t. $\ell$-convergence.
Moreover, any limit space satisfies $TCBB(-K)$. 
\end{theorem}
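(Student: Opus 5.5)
The plan is to reduce everything to the convergence result for generalized cones (Theorem \ref{th:convergencecones}) together with the stability of timelike curvature bounds (Theorem \ref{th:stabilityTCBB}) and Theorem \ref{th:1}.

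\textbf{Step 1: translate $R^Y\geq K$ into conditions on $f$ and $X$.} Let $Y=-I\times_f X$ be smooth with $R^Y\geq K$ in the sense of \cite{albi_lorentz}, as recalled in Subsection \ref{subsec:curv_smooth}. Evaluating the curvature tensor of the warped product on the standard planes gives the two facts we need. On mixed planes spanned by $\partial_t$ and a fiber vector one gets $f''+Kf\leq 0$, so $f$ is $\mathcal FK$-concave. On planes tangent to the fiber, the Gauss-type formula gives $\sec^X\geq -\inf_I (Kf^2+|f'|^2)=K_f$ (the sign conventions have to be chosen to match \cite{albi_lorentz}). In particular each $Y$ in the class satisfies the hypotheses of Theorem \ref{th:1}, and each fiber $X$ is an Alexandrov space with curvature $\geq K_f$ and $\dim X\leq N-1$.

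\textbf{Step 2: obtain uniform bounds that give compactness of the ingredients.} I would first normalize the data. Rescaling $f$ by a constant while rescaling the metric of $X$ inversely leaves the cone unchanged, and translating $t$ does as well. Together with the covering and exhaustion conventions in the definition of $\ell$-convergence, this lets us work on compact pieces: a compact subinterval $J\subset I$ and balls $B_R(x_0)\subset X$.

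On a compact $J$, $\mathcal FK$-concavity implies that $f$ is locally uniformly Lipschitz, with a bound depending on $K$ and on $\sup_J f$. Concavity-type comparison also gives a two-sided bound on $f$ once it is normalized at one point. One must make sure $f$ stays positive on the relevant region or handle degenerate endpoints; if needed one restricts to subintervals, where the convention of Definition \ref{def:ellconvergence} handles exhaustion.

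By Arzel\`a--Ascoli, $f_i\to f$ locally uniformly along a subsequence, and $f$ is again $\mathcal FK$-concave, since the distributional inequality passes to the limit. Moreover $K_{f_i}$ stays bounded below, so by Gromov's compactness theorem (curvature $\geq \kappa$, dimension $\leq N-1$) the pointed fibers $X_i$ subconverge in pointed GH sense to an Alexandrov space $X$. The bases $I_i$ subconverge as intervals.

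\textbf{Step 3: conclude.} Theorem \ref{th:convergencecones} gives $-I_i\times_{f_i}X_i\to -I\times_f X$ in $\ell$-sense, which proves precompactness. For the curvature statement, each $Y_i$ satisfies $TCBB(-K)$ by Theorem \ref{th:1} (or by \cite{agks}, since $f_i$ is smooth), so $Y$ does too by Theorem \ref{th:stabilityTCBB}.

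Alternatively, semicontinuity of the local slope gives $K_f\leq\liminf K_{f_i}$. Then $X$ has curvature $\geq \limsup K_{f_i}\geq K_f$, and Theorem \ref{th:1} applies directly to the limit.

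\textbf{Main obstacle.} The hard part is Step 2. We need uniform control of $f_i$ (positivity, upper bounds, Lipschitz bounds) that is independent of $i$ after normalization, and we must verify the uniform non-total-imprisonment hypothesis of Theorem \ref{th:convergencecones} for the sequence.

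I would try to obtain this from the uniform $\mathcal FK$-concavity alone. Comparison with the solution of $u''+Ku=0$ bounds $|f'|$ in terms of $\sup f$ on slightly larger intervals. The time function $t$, being uniformly Lipschitz with respect to the chosen product-type metrics, should then give the needed uniform bounds on lengths of causal curves.
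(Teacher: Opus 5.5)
Your proposal is correct and follows essentially the paper's route: the paper reruns the proof of Theorem \ref{th:4}, with Proposition \ref{prop:albi} (your Step 1) in place of the Ricci formula. It normalizes $f_i(\bar t_i)=1$, gets uniform Lipschitz bounds on $\log f_i$ over each $I^k_i$ from the Riccati inequality $u''+(u')^2+K\le 0$ (this replaces your comparison with $u''+Ku=0$), deduces $K_{f_i}\ge -C(k)$, applies Gromov compactness to the fibers, and concludes with Theorem \ref{th:convergencecones} and Theorem \ref{th:stabilityTCBB}. One caution on your optional ``alternatively'' route: lower semicontinuity of the slope gives $\liminf_i \inf G_i\ge \inf G$, i.e.\ $\limsup_i K_{f_i}\le K_f$, which is the wrong direction. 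So $K_f\le\liminf_i K_{f_i}$ needs a separate argument, for instance the mean value and intermediate value theorems applied to $f_i'$, showing $\limsup_i \inf_{I_i} G_i\le G(x)$ for every $x\in I$; your main line does not need this.
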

The lower bound on the full curvature  seems to be important for the precompactness results (Remark \ref{rem:thelast}). We formulate the following conjecture.
\begin{conjecture} Let $(M, h)$ be an $n$-dimensional Riemannian manifold.
The class
\begin{center}$\left\{(M,h,g): (M, g) \mbox{ is a globally hyperbolic spacetime, } \ric^{(M,g)}\geq K\right\}$\end{center}
is pre-compact w.r.t. $\ell$-convergence. 
\end{conjecture}

\begin{remark}
The limit spaces that appear in the two previous compactness theorems are spaces that have timelike lower curvature bounds or satisfy a timelike curvature-dimension condition, respectively. But, it is clear that they belong to a more restrictive class of spaces that have curvature bounded from below also in spacelike directions in a sense that generalizes $\ric^M\geq K$ (or $R^M\geq K$). 
%
\end{remark}
\subsection{Overview}
In Section \ref{sec:prel} we will recall basic information about metric and metric measure spaces, Gromov-Hausdorff convergence, Lorentzian pre-length and geodesic spaces, optimal transport on Lorentzian pre-lengthspaces, and curvature and curvature-dimension conditions.

In Section \ref{sec:genc} we will give a brief introduction to generalized cones as metric and Lorentzian spaces, in smooth and non-smooth context. 

In Section \ref{sec:ell} we will introduce $\ell$ and measured $\ell$-convergence, and prove some of it properties. 

In Section \ref{sec:stability} we show that timelike curvature and curvature-dimension bounds are stable w.r.t. $\ell$-convergence. 

In Section \ref{sec:concon} we study $\ell$-convergence of generalized cones. 

In Section \ref{sec:curcon} we use the previous findings to prove timelike curvature and curvature-dimension bounds for generalized cones. 

In Section \ref{sec:preco} we show the pre-compactness w.r.t. $\ell$-convergence of generalized cones that admit a lower bound on the full ricci tensor. 
\subsection*{Declaration on the Use of AI} All mathematical ideas, arguments, and results presented in this
manuscript were developed by the author. After completion of the initial draft, AI tools were used
only to improve some of the language. The author takes full responsibility for the content of the manuscript

\subsection*{Acknowledgements} 
This work was started during a research visit of the author at  the  Universit\'e de Haute-Alsace in Mulhouse in 2025 
funded by the  Institut National des Sciences Math\'ematiques of the CNRS.
I  want to thank  my local host Nicolas Juillet for many inspiring dicussions about topics connected to this work.

%
\section{Preliminaries}\label{sec:prel}
\subsection{Metric and metric measure spaces} We recall some basics about metric and metric measure spaces and their convergence \cite{bbi, agsgradient, gmsstability}.

We consider a  metric space  $(X, \de_\sX)$ that is  separable (locally compact). For $x,y\in X$ we also write $\de_\sX(x,y)= |x,y|$. A pointed metric space $(X, \bar x)$ consists of a complete metric space $X$ and a point $\bar x$. 
%

The length of a continuous curve $\gamma:[a,b]\rightarrow X$ w.r.t. $\de_\sX$ is denoted $L^\sX(\gamma)$. 
The metric space $X$ is said to be a geodesic  space if for all $x, y\in X$ there exists a continous curve $\gamma: [a,b]\rightarrow X$ such that $\gamma(a)=x$, $\gamma(b)=y$ and $L^\sX(\gamma)= |x,y|$.  
\subsubsection{Gromov-Hausdorff convergence {\cite{bbi}}}
A sequence of compact metric spaces $(X_i)_{i\in \N}$ converges in Gromov-Hausdorff (GH) sense to a metric space $X$, we write $X_i \overset{\scriptscriptstyle GH}{\longrightarrow} X$ if $\forall \epsilon>0$ there exists a  map $\phi: X_i \rightarrow X$ such that 
\begin{enumerate}
\item $\left| \de_{\sX}(\phi(x), f(y)) - \de_\sXi(x, y)\right|< \epsilon$,
\item $B_\epsilon(\phi(X_i))= X$. 
\end{enumerate}
The map $\phi: X_i \rightarrow X$ is  called an $\epsilon$-GH isometry. One can choose $\phi$ to be Borel measurable. 
Equivalently, there is a compact metric space $Z$ and isometric embeddings $\iota_{i}: X_i \rightarrow Z$, $\iota: X\rightarrow Z$ such that 
$\iota_i\left(X_i\right)$ converges in the Hausdorff sense to  $\iota\left(X\right).$

 A sequence $(X_i, \bar x_i)_{i\in \N}$ of pointed meric spaces  converges in the pointed  Gromov-Hausdorff (pGH) sense to a pointed metric space $(X, \bar x)$, we write $(X_i, \bar x_i)\overset{\scriptscriptstyle pGH}{\longrightarrow} (X, \bar x)$, if 
for every $R>0$ and $\epsilon>0$ there exists $i(R, \epsilon)\in \N$ such that for every $i\geq i(\epsilon, R)$ there is a   map $\phi: B_R(\bar x_i) \rightarrow X$ such that  $\phi(\bar x_i)= \bar x$ and
\begin{enumerate}
\item $\left| d_\sX(\phi(x), \phi(y))- \de_{\sXi}(x, y)\right|< \epsilon $  $\forall x, y\in B_R(\bar x_i)$; 
\item $B_{R-\epsilon}(\bar x)\subset B_\epsilon( \phi(B_R(\bar x_i)))$.
\end{enumerate}

If $X_i$ are geodesic spaces, then $X$ is a geodesic space. 

If $X$ is a geodesic space, then the  conditions in the definition of pGH convergence can be replace with the following conditions:  

For every $R>0$ and $\epsilon>0$ there exists $i(R, \epsilon)$ such that for every $i\geq i(\epsilon, R)$ there is a map $\phi : B_R(\bar x_i) \rightarrow B_R(\bar x)$ such that  $\phi(\bar x_i)= \bar x$ and
\begin{enumerate}
\item $\left| d_\sX(\phi(x), \phi(y))- \de_{\sXi}(x, y)\right|< \epsilon $  $\forall x, y\in B_R(\bar x_i)$; 
\item $B_{R}(\bar x)\subset B_\epsilon( \phi(B_R(\bar x_i)))$.
\end{enumerate}
In particular,  $\overline{B_R(\bar x_i)}$ converges in GH sense to $\overline{B_R(\bar x)}$.

Moreover, we find a complete separable metric space $Z$ and isometric embeddings $\iota_{i}: X_i \rightarrow Z$, $\iota: X\rightarrow Z$ such that \begin{center}
$\iota_i\left(\overline{B_R(\bar x_i)}\right)$ converges in Hausdorff sense to  $\iota\left(\overline{B_R(\bar x)}\right).$
\end{center} We say that  pGH 	of $X_i$ to $X$ convergence is realized in $Z$.
\subsubsection{Measures and weak convergence}
Let $X$ be a separable metric space. Let $C_b(X)$ and $C_{bs}(X)$ be the space of  bounded continuous function on $X$ and the spaces of continuous functions with bounded support, respectively. 
A sequence of Borel measures $(\m_i)_{i\in \N}$  in $X$ converges weakly to a Borel measure $\m$ in $X$ if 
\begin{align}\label{eq:weakconvergence}\lim_{i\rightarrow \infty} \int f \de \m_i= \int f \de \m \ \ \forall f\in C_{bs}(X).
\end{align}

Let $\mathcal P(X)$ be the set of Borel probability measures. A sequence $(\mu_i)_{i\in \N}\subset \mathcal P(X)$ converges weakly (we also say converges narrowly) to $\mu\in \mathcal P(X)$ if \eqref{eq:weakconvergence} holds with $C_b(X)$ in place of $C_{bs}(X)$. 

Relative arrow compactess in $\mathcal P(X)$ is characterized by Prokhorov's Theorem, i.e.  $\mathcal K \subset \mathcal P(X)$ is precompact w.r.t. to narrow convergence if and only if $\mathcal K$ is tight. A subset $\mathcal K$ is said to be tight if for every $\epsilon>0$ there exists a compact set $K\subset X$ such that $\mu(X\backslash K)\leq \epsilon$ for every $\mu\in \mathcal K$. 

Moreover, a subset $\mathcal K\subset \mathcal P(X\times X)$ is tight if and only if the sets of marginal distribuitions are tight in $X$. For more details we refer to \cite{viltot, gmsstability}.

\subsubsection{Convergence of metric measure spaces} 

A metric measure space  is a triple $(X, \de_\sX, \m_\sX)$ that consists of a separable metric space $(X, \de_\sX)$ equipped with a nonnegative, locally finite (Radon) measure $\m_\sX$. 

A sequence of compact metric measure spaces $(X_i, \de_{\sX_i}, \m_{\sX_i})_{i\in \N}$ converges in measured Gromov Hausdorff (mGH) to a metric measure space $(X, \de_\sX, \m_\sX)$ if $X_i$ converges in GH sense to $X$, and for any sequence $(\epsilon_i)$ with $\epsilon_i\downarrow 0$, and for measurable $\epsilon_i$-GH isometries $\phi_i$ we have that $(\phi_i)_\sharp \m_i \rightarrow \m_i$ weakly in $X$.  Equivalently, if $Z$ is a complete metric space such that $X_i, X$ embed distance preservingly into $Z$ and the sequence $X_i$ converges in Hausdorff sense to $X$, then $\m_i$ converges weakly to $\m$ in $Z$ \cite{GROMOV, stugeo1, gmsstability}.

\subsection{Lorentzian pre-length and geodesic spaces}
Lorentzian pre-length spaces were introduced by Kunzinger and S\"amann in \cite{kusa}. Here, we follow the presentation given in \cite{camolorentz} (see also McCann \cite{camolorentzreview} and Minguzzi \cite{minguzzi} for slightly modified setups).
\subsubsection{Lorentzian pre-length spaces}
A \emph{causal set} is a triple $(Y,\ll, \leq)$ where $Y$ is a set, $\leq$ is a pre-order, i.e. a reflexive and transitive relation, 
and $\ll$ is a transitive relation contained in $\leq$. 
We write $x<y$ when $x\leq y$ and $x\neq y$. We say $x$ and $y$ are \emph{timelike} (resp.\ \emph{causally}) \emph{related} if $x\ll y$ (resp.\ $x\leq y$). 
For  $A\subseteq Y$ we define the \emph{chronological} and \emph{causal future} of $A$ as
\begin{align*}
I^+(A):=\{y'\in Y\colon\exists y\in A, y\ll y'\},  \ \ \ J^+(A):=\{y'\in Y\colon\exists y\in A, y\leq y'\}
\end{align*}
respectively. Analogously we define the chronological past $I^-(A)$ and the causal past $J^-(A)$ of $A$. In the case $A=\{y\}$ for $y\in Y$ we will write $I^+(x):= I^+(\{x\})$ and $J^+(x)=J^+(\{x\})$. 
The \emph{chronological} and \emph{causal emeralds}  of $A,B\subseteq Y$ are defined as
\begin{align}
&I(A,B):=I^+(A)\cap I^-(B), \ \ \ \  J(A,B):=J^+(A)\cap J^-(B)\,.
\end{align}
The \emph{chronological} and \emph{causal diamond} for points $x,y\in Y$ are
 $I(x,y):= I(\{x\}, \{y\})$ and $J(x,y):= J(\{x\}, \{y\})$. 

\begin{definition}
A \emph{Lorentzian pre-length space} $(Y, \de, \ll, \leq, \tau)$ is a causal space $(Y, \ll, \leq)$  with a 
separable metric $\de$ and a 
lower semi-continuous function $\tau: Y\times Y \rightarrow [0,\infty]$ called \emph{time separation function} that satisfies 
\begin{itemize}
    \item[(i)] $\tau(x,y)=0$ if $x\nleq y$, 
    \item[(ii)] $\tau(x,y)>0$ if and only if $x\ll y$, 
    \item[(iii)] $\tau(x,z)\geq \tau(x,y)+ \tau(y,z)$ if $x\leq y\leq z$.
\end{itemize}
\end{definition}
\begin{remark}
The set $Y$ is endowed with the metric topology induced by $\de$. The lower semi-continuity of $\tau$ implies that $I^{\pm}(x)$ is open for all $x\in Y$. We also refer to the function $\tau(x,\cdot)$ \textnormal{(}and $\tau(\cdot,y)$\textnormal{)} as
\emph{Lorentzian distance} from a fixed point $x\in Y$ \textnormal{(}to a fixed point $y\in Y$\textnormal{)}.
\end{remark}
For the signed time separation function $\ell: Y\!\times\! Y\rightarrow \{-\infty\} \cup [0, \infty)$ we set 
$$
\ell(x,y)= \begin{cases} -\infty & \mbox{ if } x\nleq y, \\
\tau(x,y) & \mbox{ otherwise}. 
\end{cases}
$$
The subsets of timelike and causal pairs are then given by
$$
Y^2_{\ll} = \ell^{-1}((0, \infty))= \{\ell>0\}, \ \ \ Y^2_{\leq}= \ell^{-1}([0, \infty))= \{\ell\geq 0\}.
$$
Moreover $\tau= \max\{ 0, \ell\}$.

\begin{notation} The signed time separation function $\ell$ encodes the causal structure of $(Y, \de, \leq , \ll, \tau)$, and we will use  $(Y, \de, \ell)$ as a shorter notation for $(Y, \de, \leq , \ll , \tau)$. 
\end{notation}
\subsubsection{Lorentzian geodesic spaces}
If $I\subseteq\R$ is an interval, a curve $\gamma:I\rightarrow Y$ is called future-directed timelike (resp.\ causal) if $\gamma$ is locally Lipschitz continuous (w.r.t.\ $\de$) and if for all $s\leq t\in I$, it holds $\gamma(s)\ll \gamma(t)$ (resp.\ $\gamma(s)\leq \gamma(t)$). We say that $\gamma$ is a null
curve if, in addition to being causal, no two points on $\gamma(I)$ are timelike related.

The $\tau$-length $L_\tau(\gamma)$ of a future directed causal curve $\gamma\colon[a,b]\rightarrow Y$ is defined via the time separation function, in analogy to arclength in  the theory of  metric spaces \cite{kusa}, i.e.,
\begin{equation*}
    L_\tau(\gamma):=\inf \left\{\sum_{i=0}^{N-1} \tau(\gamma(t_i),\gamma(t_{i+1})): N\in\N,\, a=t_0 < t_1 < \ldots < t_N=b\right\}\,.
\end{equation*}

A future-directed causal curve $\gamma:[a,b]\rightarrow Y$ is called \emph{maximal} (or a \emph{maximizer}) if its length realizes the time separation function between $\gamma(a)$ and $\gamma(b)$, i.e., $L_\tau(\gamma)= \tau(\gamma(a), \gamma(b))$. 

If the time separation function is continuous with $\tau(x,x)=0$ for every $x\in Y$
(as it will be, if we assume that $Y$ is a globally hyperbolic geodesic Lorentzian space), 
then any maximal timelike curve  $\gamma$ with finite $\tau$-length has a (continuous, monotonically strictly increasing) reparametrization $\varphi$ by $\tau$-arclength, i.e. $\tau(\gamma\circ \varphi(s), \gamma\circ \varphi(t))= t-s$ for all $s\leq t$.

A curve $\gamma:[0,1]\rightarrow Y$ will be called
a (causal) \emph{geodesic} if it is maximal and continuous when 
parametrized proportional by $\tau$-arc-length, i.e.   the set of (causal) geodesics is
$$\Geo(Y)= \{\gamma\in {C}([0,1], Y): \tau(\gamma(s), \gamma(t))= (t-s)\tau(\gamma(0), \gamma(1)), \forall s< t\}.$$
Its subset of timelike geodesics is defined as 
$$\TGeo(Y)= \{\gamma\in \Geo(Y): \tau(\gamma(0), \gamma(1))>0\}.$$

   A subset $G\subseteq\TGeo(Y)$  is called \emph{forward timelike nonbranching} if for any $\gamma^1,\gamma^2\in G$ the following holds:
   \begin{align*}
       \exists t\in(0,1)\colon\gamma^1_s=\gamma^2_s\quad\forall s\in[0,t]\quad\Longrightarrow\quad\gamma_s^1=\gamma^2_s\quad\forall s\in[0,1].
   \end{align*}
If $\TGeo(Y)$ is forward timelike nonbranching, then we say the Lorentzian pre-length space $(Y,\de, \ell)$ is forward timelike nonbranching. Similarly, one defines \emph{backward timelike nonbranching}, and we  call the Lorentzian pre-length space \emph{timelike nonbranching} if it is forward and backward timelike nonbranching.

\begin{definition}[Lorentzian geodesic space] A Lorentzian pre-length space $(Y,\de, \ll, \leq, \tau)$ is termed \emph{Lorentzian geodesic space} if additionally it is: 
\begin{itemize}
\item \emph{$\de$-compatible:} every $x\in Y$ admits a neighbourhood $U$ and a constant $C$ such that $L_{\de}(\gamma)\leq C$ for every future or past directed causal curve $\gamma$ contained in $U$;
\item  \emph{geodesic:} for all $x,y\in Y$ with $x<y$ there is a {\it maximal} future-directed causal curve $\gamma$ from $x$ to $y$, i.e.,  $\tau(x,y)= L_{\tau}(\gamma)$.
\end{itemize}
In particular, a Lorentzian geodesic space is \emph{intrinsic}, i.e.,
\begin{equation*}
    \tau(x,y) = \sup\{L_\tau(\gamma): \gamma \text{ future directed causal curve from } x \text{ to } y\}\,,
\end{equation*}
and it is a Lorentzian length space if it is additionally locally causally closed, $I^\pm(x)\neq\emptyset$ for all $x\in Y$ and timelike path connected, see  \cite[Def.\ 3.22]{kusa}. 
\end{definition}
{\color{black} 
A Lorentzian geodesic space is in particular a Lorentzian length space, see
\cite[Def.~3.22]{kusa}.

Hence from \cite[Corollary 3.8]{minguzzi_global} we can consider the following version of global
hyperbolicity that is consistent with the previous literature. 
\begin{definition} A Lorentzian geodesic
space $(X, d, \ll, \leq, \tau)$ is called
\begin{itemize}
    \item {\it Causal}: if $\leq$ is also antisymmetric, i.e.\ $\leq$ is an order;
    \item {\it Globally hyperbolic}: if it is causal and for every $x, y \in X$ the causal
    diamond $J^+(x) \cap J^-(y)$ is compact in $X$.
\end{itemize}
\end{definition}
From \cite[Theorm 3.7]{minguzzi_global} this definition of global hyperbolicity for a Lorentzian geodesic (length) space is equivalent
with the one adopted in \cite{camolorentz}, i.e.  in addition to compactness of causal diamonds $Y$ is non-totally imprisoning. 

\begin{definition} A Lorentzian pre-length space  $Y$ is called {\it non-totally imprisoning} if  for every compact set $K\subset Y$ there exists $C>0$ such that the $\de$-arclength of every causal curve contained in $K$ is bounded by $C$.
\end{definition}

Global hyperbolicity
implies that the relation $\leq$ is a closed subset of $X \times X$, i.e. $X$ is causally closed. It was proved in \cite[Thm.~3.28]{kusa}
that for a globally hyperbolic Lorentzian geodesic space $(X, d, \ll, \leq, \tau)$,
the time-separation function $\tau$ is finite and continuous:
 in particular
the previous remark on the existence of constant $\tau$-speed parametrizations
for maximal causal curves applies, thus any two distinct causally related
points are joined by a causal geodesic.}
\begin{definition} A measured Lorentzian pre-length space is a  Lorentzian pre-length space $(Y, \de, \ell)$ equipped with a non-negative Radon measure $\m$ such that $\spt \m= Y$.  
\end{definition}
Because of (1) the following notion of isomorphism between Lorentzian geodesic spaces is stronger than what is usually required. 
\begin{definition}
We say two  Lorentzian geodesic spaces $(Y_1, \de_1, \ell_1)$ and $(Y_2, \de_2, \ell_2)$ are isomorphic if there exists $\Phi: Y_1 \rightarrow Y_2$ that is bijective and  such that the following properties hold: 
\begin{enumerate}
\item $\Phi$ is distance preserving, 
\item $\Phi$ is $\ell$-preserving:  $\ell_1(x,y)= \ell_2(\Phi(x), \Phi(y))$ $\forall x, y \in Y_1$. 
\end{enumerate}
We say two measured Lorentzian geodesic spaces are isomorphic if in addition the following property holds: 
\begin{enumerate}
\item[(3)] $\Phi_\sharp \m_1= \m_2$. 
\end{enumerate} 

Given two isomorphic Lorentzian geodesic spaces (or measurable Lorentzian geodesic spaces) $Y_1 =(Y_1, \de_1, \ell_1)$ and $Y_2=(Y_2, \de_2, \ell_2)$ (or $Y_i=(Y_i, \de_i, \ell_i, \m_i)$, $i=1,2$)  we write $Y_1\simeq Y_2$.
\end{definition}
\subsection{Optimal transport in Lorentzian length spaces}
We review the theory of optimal transport  developped for Lorentzian pre-length spaces by Eckstein and Miller  in \cite{eckmil},  by McCann in smooth context \cite{mccann_lorentz} and by Cavalletti and Mondino in \cite{camolorentz}.

\subsubsection{The $\ell_p$-optimal transport problem}
 Though the following can be considered in the more general context of Lorentzian pre-length spaces, here we assume  a globally hyperbolic Lorentzian geodesic space $X$.

Let $\mu_0, \mu_1\in \mathcal P(X)$. Let $P_1: X\times X \rightarrow X$ and $P_2:X\times  X \rightarrow X$ the projection on the first and second factor respectively of $X\times X$. We denote 
\begin{align*}
\Pi(\mu_0, \mu_1)&:= \left\{ \pi \in \mathcal P(X\times X): (P_0)_\sharp \pi= \mu_0, \  (P_1)_\sharp \pi=\mu_1\right\}\\
\Pi_{\leq}(\mu_0, \mu_1)&:= \left\{ \pi \in \Pi(\mu_0, \mu_1): \pi(\{ \ell\geq 0\})=1 \right\}
\end{align*}
Clearly, if $\pi \in \Pi_{\leq}(\mu_0, \mu_1)$, then $\pi$-a.e.\ one has $\tau(x,y) = \ell(x,y)$. Moreover,
using the convention that $\infty - \infty = -\infty$, if $\pi \in \Pi(\mu_0, \mu_1)$ satisfies
\[
\int_{X \times X} \ell(x,y)\,\pi(d x, d y) > -\infty,
\]
then $\pi \in \Pi_{\leq}(\mu_0, \mu_1)$. The $\ell_p$-Wasserstein distance between $\mu, \nu\in \mathcal P(X)$ is defined as
\begin{equation}\label{def:lorentz_wasserstein}
\ell_p(\mu, \nu)= \sup_{\pi \in \Pi(\mu,\nu)}
\left( \int_{X \times X} \ell(x,y)^p \,\pi(d x d y) \right)^{1/p}.
\tag{2.9}
\end{equation}

If $X$ globally hyperbolic and geodesic (so that $\tau$ is continuous), 
the cost $\ell^p$ is upper semi-continuous
on $X \times X$. Therefore, one can  invoke standard optimal transport techniques
(e.g.\ \cite{vilot, viltot}) to ensure the existence of a solution of the Monge--Kantorovich
problem (2.9) \cite[Section 2.1]{camolorentz}.

\begin{remark}\label{rem:lorentz_wasserstein} Using standard optimal transport methods one can show that in a globally hyperbolic Lorentzian geodesic spaces, the  supremum in \eqref{def:lorentz_wasserstein} is attained and finite, provided $\Pi_{\leq}(\mu_0, \mu_0)\neq \emptyset$ and $\mu_0$ and $\mu_1$ are compactly supported (Proposition 2.5 in \cite{camolorentz}).
\end{remark}
\noindent
\subsubsection{Cyclical monotonicity}
\begin{definition}[$\ell_p$-ciclical monotonicity]
Fix $p \in (0,1]$ and let $(X,\de, \ell)$ be a Lorentzian pre-length space. A subset $\Gamma \subset X_{\le}$ is said to be $\ell_p$-cyclically monotone if, for any $N \in \mathbb{N}$ and any family $(x_1,y_1), \dots, (x_N,y_N)$ of points in $\Gamma$, the following inequality holds:
$$
\sum_{i=1}^N \ell(x_i,y_i)^p \ge \sum_{i=1}^N \ell(x_{i+1},y_i)^p
$$
with the convention $x_{N+1} = x_1$.

A coupling is said to be  $\ell_p$-cyclically monotone if it is concentrated on a  $\ell^p$-cyclically monotone set.
\end{definition}
The following definition appears in \cite{camolorentz}.
\begin{definition}[(Strongly) Timelike $p$-dualisable measures]
Let $(X, d, \ll, \leq, \tau)$ be a Lorentzian pre-length space and let $p \in (0,1]$. A pair $(\mu, \nu) \in \mathcal{P}(X)^2$ is \emph{timelike $p$-dualisable} (by $\pi \in \Pi_{\ll}(\mu, \nu)$) if
\begin{enumerate}
    \item $\ell_p(\mu, \nu) \in (0,\infty)$;
    \item $\pi \in \Pi^{p\text{-opt}}_{\leq}(\mu, \nu)$ and $\pi(X_{\ll}^2) = 1$;
    \item there exist measurable functions $a,b : X \to \mathbb{R}$, with $a \oplus b \in L^1(\mu \otimes \nu)$ such that
   $
        \ell_p \leq a \oplus b \quad \text{on } \operatorname{supp} \mu \times \operatorname{supp} \nu.
   $
\end{enumerate}
The pair  $(\mu, \nu) \in \mathcal{P}(X)^2$ is called \emph{strongly timelike $p$-dualisable} if, in addition, it satisfies:
\begin{enumerate}
    \setcounter{enumi}{3}
    \item there exists a measurable $\ell_p$-cyclically monotone set $\Gamma \subset X_{\ll}^2 \cap (\operatorname{supp} \mu \times \operatorname{supp} \nu)$ such that a coupling $\pi \in \Pi_{\leq}(\mu, \nu)$ is $\ell_p$-optimal if and only if $\pi(\Gamma) = 1$.
\end{enumerate}
\end{definition}
\subsubsection{Geodesics of probability measures in Lorentzian geodesic spaces}
The evaluation map is defined by
\begin{equation}
\label{eq:eval_map}
e_t : C([0,1], X) \to X, \quad \gamma \mapsto e_t(\gamma) := \gamma_t,
\quad \forall t \in [0,1].
\end{equation}

Let $(X, d, \ll, \leq, \tau)$ be a Lorentzian pre-length space and let $p \in (0,1]$. We say that $\eta \in \mathcal{P}(\mathrm{Geo}(X))$ is an $\ell_p$-optimal dynamical plan between $\mu_0 \in \mathcal{P}(X)$ and $\mu_1 \in \mathcal{P}(X)$ if 
$
(e_0)_\# \eta = \mu_0, \  (e_1)_\# \eta = \mu_1
$
and
\begin{equation}
\label{eq:optimal_coupling}
(e_0, e_1)_\# \eta \in \Pi^{p\text{-opt}}_{\leq}\big(\mu_0, \mu_1\big).
\end{equation}
The set of $\ell_p$-optimal dynamical plans from $\mu_0$ to $\mu_1$ is denoted by
\[
\mathrm{OptGeo}_{\ell_p}(\mu_0, \mu_1).
\]

We say that a curve $[0,1] \ni t \mapsto \mu_t \in \mathcal{P}(X)$ is an $\ell_p$-geodesic if there exists an $\ell_p$-optimal dynamical plan $\eta$ between $\mu_0$ and $\mu_1$ such that
\[
\mu_t = (e_t)_\# \eta, \quad \forall t \in [0,1].
\]
Notice that if $\eta \in \mathrm{OptGeo}_{\ell_p}(\mu_0, \mu_1)$, then the  corresponding $\ell_p$-geodesic
$
(\mu_t)_{t\in [0,1]}
$
is continuous  w.r.t.  weak convergence and satisfies
\[
\ell_p(\mu_s, \mu_t) = (t - s)\,\ell_p(\mu_0, \mu_1),
\quad \forall s,t \in [0,1].
\]
\cite[Theorem 2.32]{camolorentz}
\begin{remark} Let $X$ be a globally hyperbolic Lorentian geodesic space. 
If $\mu_0, \mu_1 \in \mathcal{P}_c(X)$ have compact support such that $\Pi_{\leq}^{p-opt}(\mu_0, \mu_1)\neq \emptyset$, then there always exists an $\ell_p$-optimal dynamical plan $\eta \in \mathrm{OptGeo}_{\ell_p}(\mu_0, \mu_1)$ (and thus an $\ell_p$-geodesic) from $\mu_0$ to $\mu_1$, see~\cite[Prop.~2.33]{camolorentz} for the proof and further properties of $\ell_p$-optimal dynamical plans.
\end{remark}
\subsection{Curvature-dimension conditions}
For $\kappa\in \mathbb{R}$ let $\sin_{\kappa}:[0,\infty)\rightarrow \mathbb{R}$ be the solution of 
$
v''+\kappa v=0, \ v(0)=0 \ \ \& \ \ v'(0)=1.
$
We set \begin{center} $\pi_\kappa=\begin{cases}  \frac{ \pi}{\sqrt \kappa}& \mbox{ if } \kappa>0, \\
\infty &\mbox{ if } \kappa\leq 0.\end{cases}$
\end{center}

For $K\in \mathbb{R}$, $N\in (0,\infty)$ and $\theta> 0$ we define the \textit{distortion coefficient} as
\begin{eqnarray}
t\in [0,1]\mapsto \sigma_{K,N}^{(t)}(\theta)=\begin{cases}
                                             \frac{\sin_{K/N}(t\theta)}{\sin_{K/N}(\theta)}\ &\mbox{ if } \theta\in (0,\pi_{K/N}),\\
                                             \infty\ & \ \mbox{otherwise}.
                                             \end{cases}
\end{eqnarray}
One sets $\sigma_{K,N}^{(t)}(0)=t$.
Moreover, for $K\in \mathbb{R}$, $N\in [1,\infty)$ and $\theta\geq 0$ the \textit{modified distortion coefficient} \cite{stugeo2} is defined as
\begin{eqnarray*}
t\in [0,1]\mapsto \tau_{K,N}^{(t)}(\theta)=
                                            t^{\frac{1}{N}}\left[\sigma_{K,N-1}^{(t)}(\theta)\right]^{1-\frac{1}{N}}  
                                     \end{eqnarray*}
                                           and $\tau_{K,N}^{(t)}(\theta)=\theta\cdot\infty  \mbox{ if }K>0\mbox{ and }N=1$.

Let $(X, \de, \m)$ be a metric measure space. For $N\in [1, \infty)$ the \textit{$N$-Renyi entropy} w.r.t. $\m$ is defined by
$$
S_N(\cdot|\m):\mathcal{P}^2_b(X)\rightarrow (-\infty,0],\ \ S_N(\mu|\m)=\begin{cases}-\int_X \rho^{1-\frac{1}{N}}\de\m& \mbox{if $\mu=\rho\m$,  }\smallskip\\
0& \mbox{otherwise}.
\end{cases}
$$
By Jensen's inequality we have $S_N(\mu|\m)\geq - \m(\supp \mu)^{\frac{1}{N}}$ for every $\mu \in \mathcal P_c(X)$.  The $N$-Renyi entropy is lower semi-continuous w.r.t. weak convergence in $\mu$ and in $\m$. 

The Shanon-Boltzmann entropy w.r.t. $\m$ is define by 
\begin{align*}\Ent_{\m}(\mu)= \begin{cases} \int_X \rho \log \rho \de\m & \mbox{if } \mu= \rho \m \mbox{ and } (\rho \log \rho)^+ \in L^1(\m), \\
\infty & \mbox{otherwise}
\end{cases}\end{align*}
and let $D(\Ent_{\m})$ be the domain of finiteness of $\Ent_\m$. Also $\Ent_\m(\mu)$ is  lower semi-continuous w.r.t. weak convergence in $\mu$ and in $\m$. 

We recall the following definition from \cite[Def. 3.2, Prop. 3.3]{camolorentz}. The functional $U_N: \mathcal P_c(M)\rightarrow [0, \infty)$, $N\in (0, \infty)$, is defined by 
$$U_N(\mu)= e^{-\Ent_\m(\mu)/N}.$$
\begin{definition}[{\cite{camolorentz}}] Let $K\in\R$ and $N\in (0,\infty)$. 
 A measured Lorentzian pre-lenght space $Y$ satisfies the {entropic timelike curvature-dimension condition} ${\TCD_p^e(K,N)}$ if for every timelike $p$-dualizable pair $\mu_0,\mu_1 \in D(\Ent_\m)\cap \mathcal P_c(Y)$, there exists an ${\ell_p}$-geo\-desic $(\mu_t)_{t\in [0,1]}$ connecting $\mu_0$ and $\mu_1$  as well as a timelike $p$-dualizing  coupling $\pi\in{\Pi^{p-opt}_\ll(\mu_0,\mu_1)}$ such that $\forall t\in [0,1]$,
\begin{align*}
U_N(\mu_t) \geq \sigma_{K,N}^{(1-t)}\big[\left\| \tau \right\|_{L^2(\pi)}\big]U_N(\mu_0) + \sigma_{K,N}^{(t)}\big[\left\| \tau \right\|_{L^2(\pi)}\big]U_N(\mu_1).
\end{align*}
 If the previous claim holds for every strongly timelike $p$-dualizable  $\mu_0,\mu_1\in D(\Ent_\m)\cap \mathcal P_c(Y)$,  $Y$ satisfies the \emph{weak entropic timelike curvature-dimension condition} ${ w\TCD_p^e(K,N)}$.
\end{definition}
\begin{definition}[{\cite{braun_renyi}}]
Let $p \in (0,1)$, $K \in \mathbb{R}$, and $N \in [1,\infty)$.
We say that $X$ obeys the timelike curvature-dimension condition $\mathrm{TCD}_p(K,N)$ if for every timelike $p$-dualizable pair $\mu_0,\mu_1\in \mathcal{P}^{\mathrm{ac}}_c(X,m)$, there exists an $\ell^p$-geodesic $(\mu_t)_{t \in [0,1]}$ connecting $\mu_0=\rho_0 \m$ to $\mu_1= \rho_1 \m$, as well as a timelike $p$-dualizing coupling $\pi \in \Pi_{\ll}(\mu_0,\mu_1)$ such that for every $t \in [0,1]$ and every $N' \ge N$,
\begin{align*}
S_{N'}(\mu_t)\le - \int\left[\tau^{(1-t)}_{K,N'}\big(\theta\big)\,\rho_0(x_0)^{-\frac 1 {N'}}  + \tau^{(t)}_{K,N'}\big(\theta\big) \rho_1(x_1)^{-\frac 1 {N'}} \right]\de\pi(x_0,x_1).
\end{align*}
where $\theta= \tau(x_0, x_1)$. 
If the previous statement holds only for every strongly timelike $p$-dualizable pair $\mu_0,\mu_1\in \mathcal{P}^{\mathrm{ac}}_c(M^2,m)$, we say that $X$ obeys the weak timelike curvature-dimension condition $\mathrm{wTCD}_p(K,N)$.

If one replaces the distortion coefficients $\tau_{N',K}^{(t)}(\theta)$ with $\sigma_{N', K}^{(t)}(\theta)$ then we say $X$ obeys the reduced (weak) timelike curvature-dimension condition $\TCD^*_p(K,N)$ ($w\TCD^*_p(K,N)$). 
\end{definition}
\begin{remark}
Assuming that the Lorentzian pre-length space $X$ is geodesic and $p$-essentially nonbranching, Braun showed in \cite{braun_renyi}  that the conditions $\TCD^*_p$, $\TCD^e_p$, $w\TCD^*_p$ and $w\TCD^e_p$ are all equivalent.  Being $p$-essentially nonbranching is a measure theoretic condition that is implied in particular if the space $X$ is a nonbranching Lorentzian geodesic space. 
\end{remark}
Weaker variants of curvature-dimension conditions in the context of metric measure spaces are obtained by considering only $\ell_p$-geodesics where $\mu_1$ is the Dirac delta measure in a point $x_1$.  Such conditions go under the name of Measure Contraction Property ($\MCP$) and were developped by Sturm and Ohta \cite{stugeo2, ohtmea}. The following entropic timelike measure contraction property for Lorentzian length spaces was introduced in \cite{camolorentz}. 
\begin{definition}
We fix $p\in (0, 1)$, $K\in \R$ and $N\in (0, \infty)$. A measured Lorentzian pre-length space $(X, \de, \leq, \ll, \ell, \m)$ satisfies the entropic timelike measure contraction property $\TMCP^e(K,N)$ if for any $\mu_0 \in \mathcal P_c(X) \cap D(\Ent_\m)$ and for any $x_1\in X$ such that $\mu_1(I^-(x_1))=1$, there exists an $\ell_p$-geodesic $(\mu_t)_{t\in [0,1]}$ from $\mu_0$ to $\mu_1=\delta_{x_1}$ such that 
$$U_N(\mu_t|\m)\geq \sigma_{K,N}^{(1-t)}\left( \left\| \tau(\cdot, x_1)\right\|_{L^2(\mu_0)}\right) U_N(\mu_0|\m).$$
\end{definition}
\begin{remark} 
\begin{enumerate}
\item The validity of $\TMCP^e$ is independent of the choice of $p\in (0,1)$. 
This follows from the fact that the only (and therefore optimal) coupling between $\mu_0\in \mathcal P_c(X)$ and $\delta_{x_1}$ is $\pi= \mu_0\otimes \delta_{x_1}$. 
\item The weak timelike entropic curvature-dimension condition $w\TCD^e(K,N)$ implies $\TMCP^e(K,N)$ \cite[Proposition 3.12]{camolorentz}.
\item Under the assumption of timelike nonbranching the condition $\TMCP^e$ implies sharp Bonnet-Myers and Bishop-Gromo-type estimates \cite{camolorentz}
\item As for the curvature-dimension conditions, there are other variants of timelike measure contraction properties, such as $\TMCP$ and $\TMCP^*$, introduced in \cite{braun_renyi}.
\end{enumerate}
\end{remark}

\subsection{Timelike sectional curvature bounds}
We recall the synthetic definition of sectional curvature bounded from below for Lorentzian pre-length spaces via $4$-point configurations \cite{bkr}. For Lorentzian geodesic spaces this is equivalent to the definition via timelike triangle comparison proposed by Kunzinger and S\"amann \cite{kusa}. The latter was motivated by previous work of Alexander, Bishop and Harris \cite{albi_lorentz, harris}.

The 2D Lorentzian model spaces of constant curvature $K \in \mathbb{R}$ are 
\begin{center}
$
\mathbb L^2(K) :=
\begin{cases}
\widetilde{\mathbb S}^2_1\!\left(\frac{1}{\sqrt{K}}\right), & K > 0, \\
\mathbb{R}^{2,1}, & K = 0, \\
\widetilde{\mathbb H}^2_1\!\left(\frac{1}{\sqrt{-K}}\right), & K < 0.
\end{cases}
$
\end{center}
Here $\widetilde{\mathbb S}^2_1(r)$ is the  universal cover of the $2$-dimensional Lorentzian pseudosphere of radius $r > 0$, $\mathbb{R}^{2,1}$ is $2$-dimensional Minkowski spacetime, and $\widetilde{\mathbb H}^2_1(r)$ is the universal cover of the $2$-dimensional Lorentzian pseudohyperbolic space.

\begin{definition}[4-point configurations]
Let $(X,\de, \ell)$ be a Lorentzian pre-length space.
\begin{enumerate}
\item[(i)] A timelike  future endpoint-causal $4$-point configuration is a quadruple $(y,x,z_1,z_2)\in X^4$ such that $y \ll x \ll z_1\leq z_2$.
\item[(ii)] A timelike past endpoint-causal $4$-point configuration is a quadruple $(z_2,z_1,x,y)\in X^4$ such that $ z_2 \leq z_1 \ll x \ll y$.
\item[(iii)] Given a 4-point configuration $(y,x,z_1,z_2)$, that is timelike future endpoint-causal,  and  $K \in \mathbb{R}$,  a 4-point comparison configuration in $\mathbb L^2(K)$ is a quadruple $(\bar{y},\bar{x},\bar{z}_1,\bar{z}_2)\in \mathbb L^2(K)^4$ such that
\begin{enumerate}
\item[(a)] $\tau(y,x) = \bar{\tau}(\bar{y},\bar{x})$,
\item[(b)] $\tau(y,z_i) = \bar{\tau}(\bar{y},\bar{z}_i)$, $i=1,2$,
\item[(c)] $\tau(x,z_i) = \bar{\tau}(\bar{x},\bar{z}_i)$, $i=1,2$, and 
\item[(d)] $\bar{z}_1,\bar{z}_2$ lie on opposite sides of the line through $\bar{y},\bar{x}$.
\smallskip
\end{enumerate}
\item[(iv)] Similarly, one defines a 4-point comparison configuration for a timelike past endpoint-causal  4-point configuration.
\end{enumerate}
\end{definition}
\begin{definition}[4-point condition]\label{def:4pointcondition}
Let $(X, \de,\ell)$ be a Lorentzian pre-length space and $K \in \mathbb{R}$. A $\ge K$-comparison neighborhood is an open set $U \subseteq X$ such that
\begin{enumerate}
\item[(i)] the time separation function $\tau$ is continuous on the open set $(U \times U)\cap \tau^{-1}([0,\pi_{-K}))$, and
\item[(ii)] for every 4-point configuration $(y,x,z_1,z_2)$ in $U$, that is  timelike future endpoint-causal,  with $\tau(y,z_2) < \pi_{-K}$ and its comparison configuration $(\bar{y},\bar{x},\bar{z}_1,\bar{z}_2)$ in $\mathbb L^2(K)$ one has
$$
\tau(z_1,z_2) \ge \bar{\tau}(\bar{z}_1,\bar{z}_2).
$$
Moreover, for every timelike past endpoint-causal 4-point configuration $(z_2,z_1,x,y)$ in $U$ with $\tau(z_2,y) < \pi_{-K}$ and its comparison configuration $(\bar{z}_2,\bar{z}_1,\bar{x},\bar{y})$ in $L^2(K)$ one has
\[
\tau(z_2,z_1) \ge \bar{\tau}(\bar{z}_2,\bar{z}_1) .
\]
\end{enumerate}
Finally, we say that $(X,\ell)$ has timelike sectional curvature bounded below  by $K$, {\color{black} i.e. $(X, \ell)$ satisfies $TCBB(K)$,} if $X$ can be covered by $\ge K$-comparison neighborhoods, and we say that $(X,\ell)$ has global timelike sectional curvature bounded below by $K$ if $X$ is a $\ge K$-comparison neighborhood.

\end{definition}
The 4-point condition is equivalent to  other synthetic timelike sectional curvature bounds for large classes of Lorentzian pre-length spaces, including Lorentzian geodesic spaces, and hence to smooth timelike sectional curvature bounds, see \cite[Thm.~5.1]{bkr} and \cite{bkar}.

{\color{black}
In analogy to Alexandrov spaces with curvature bounded below it was shown in \cite{bhnr} that under quite general assumptions timelike sectional curvature bounded from below by $K$ implies the corresponding global condition. More precisely, this holds if the Lorentzian prelength space is connected, globally hyperbolic, regular and admits a time function.}
\subsection{Curvature bounds in smooth context}\label{subsec:curv_smooth}
Let $(M, g)$ be a spacetime that has signature $(-, +, \dots, +)$, and let $-M=(M, -g)$ be the spacetime with signature $(+, -, \dots, -)$. We also write $g=g_M=\langle \cdot, \cdot \rangle$. Let $R^M= R^{-M}=R$ be  the Riemann curvature tensor. 
\subsubsection{Sectional curvature}
Let $v, w\in T_pM$ such that $\{v, w\}= P$ is a non-degenerate $2$-plane in $T_pM$.  $\{v, w\}$ denotes the linear span of $v$ and $w$.  The sectional curvature of $P$ is 
$$K(P)= \frac{ \langle R(w, v)v, w\rangle}{\langle v, v\rangle \langle w, w\rangle - \langle v, w\rangle ^2}.$$
We set $Q(v,w)= \langle v, v\rangle \langle w, w\rangle - \langle v, w\rangle ^2$. We call a  $2$-plane $P$ that is spanned by two orthogonal vectors $v,w$ timelike if $Q(v,w)=\langle v, v\rangle \langle w, w\rangle <0$. If $Q(v,w)>0$, we call $P$ spacelike. 

A spacetime $(M, g)$ has sectional curvature bounded from below $K$ \cite{albi_lorentz}, we write $R\geq K$, if
$$ \langle R(v, w)w, v\rangle \geq K Q(v,w).$$
Equivalently, for every timelike plane $P$, we have $K(P)\leq K$, and for every spacelike plane $P$, we have $K(P)\geq K$. 

\begin{remark}\label{rem:RK}
\begin{enumerate}
\item If we consider $-M$, then  $R= R^M= R^{-M}$, but $K=K^M= - K^{-M}$. 
Consequently, $K^M\leq  K$ $\Leftrightarrow$ $K^{-M}\geq  -K$.

\item In general, sectional curvature bounded from below by $K$ does not imply sectional curvature bounded from below by $K'$ for $K\geq K'$. 
\item If we consider $\lambda M$, i.e. $M$ with $\lambda^2 g_M$, then $R^M\geq K$, implies $R^{\lambda M}\geq \frac 1 {\lambda^2} K$.
\end{enumerate}

It is now standard convention to say that $(M,g)$ has \emph{timelike sectional curvature bounded from below by $K$} if $K(P)\leq K$ for every timelike plane.  From \cite{bkar} we know that this is consistent with Definition \ref{def:4pointcondition} and the  definition proposed in \cite{kusa} for general Lorentzian pre-length spaces. 
\end{remark}
\subsubsection{Ricci curvature}
Let $e_1, \dots, e_n$ be an orthonormal basis of $T_pM$ w.r.t. $g_M$.
The Ricci tensor of $M$ is define by 
\begin{center}$v, w\in T_pM\mapsto \ric^M(v,w)= \sum_{i=1}^{n}\varepsilon_i \langle R(e_i, v)w, e_i\rangle$\end{center}
where $\varepsilon= g_M(e_i, e_i)$, $i=1, \dots, n$ \cite{oneillsemi}
Consequently, for the Ricci tensor of $-M$ we have
$\ric^{-M}(v,w)
=\ric^M(v,w).$
\begin{definition}[{\color{black} Lower bounds for the  Ricci tensor}]
We say that $\ric^M$ is bounded from below by $K$, we write $\ric^M\geq K$, if 
$$\ric^M(v,v)\geq -K g_M(v,v) \ \ \forall v\in TM.$$
It holds $\ric_M\geq K$ if and only if 
\begin{enumerate}
\item $\ric^M(v,v)\geq - K g_M(v,v)= K | v|^2$ for every timelike vector $v$, and 
\item $\ric^M(v,v)\geq - K g_M(v,v)= -K | v|^2$ for every spacelike vector $v$.
\end{enumerate}

We  say $M$ has \emph{timelike Ricci curvature bounded from below by $K$} if only the first condition (1) is required. 
\end{definition}
\begin{remark}
A spacetime $M$ has timelike Ricci curvature bounded from below by $K$ and dimension bounded from above by $N$  if and only if  a timelike curvature-dimension condition holds, such as $\TCD(K,N)$, $\TCD^*(K,N)$ or $\TCD^e(K,N)$ \cite{mccann_lorentz, mondinosuhr} (see also \cite{braun_renyi}). 
\end{remark}

\begin{remark}
Let us assume that $M$ has timelike sectional curvature bounded from below $K$.

Let $v\in T_pM$ be a timelike vector and let $e_1=v, \dots, e_n$ be an orthonormal basis of $T_pM$ w.r.t. $g_M$.  It follows that $P=\{ v, e_i\}$, $i=2, \dots, n$, are nondegenerated, timelike $2$-planes, $\varepsilon_i = g_M(e_i, e_i)=1$, $i=2, \dots, n$,  and 
\begin{align*}\ric^M(v,v)&= \sum_{2=1}^n \varepsilon_i \langle R(e_i, v)v, e_i\rangle = \sum_{i=2}^n \langle R(e_i, v)v, e_i\rangle \\
&\geq \sum_{i=2}^n K Q(v, e_i)= K\sum_{i=2}^n g_M(v,v)= (n-1)K g_M(v,v).\end{align*}
Hence, $M$ has timelike Ricci curvature bounded from by $-(n-1)K$.
\end{remark}
\section{Generalized cones}\label{sec:genc}
Let $I\subset \R$ be an open interval and let $f: I\rightarrow (0,\infty)$ be continuous. Let $F$ be a geodesic metric space.
We set $\overline I = I \cup \partial I$. 
\subsection{Generalized cones as metric spaces}\hspace{0mm}
We recall the theory of metric warped products between metric spaces developped by Alexander and Bishop in \cite{albi, albi0}.

Let  $\gamma:[a,b] \rightarrow I\times F$ be  a curve such that 
 $P_1\circ \gamma=:\alpha$ and $P_2\circ \gamma=:\beta $ are Lipschitz continuous curves. In the following we call such  $\gamma$ admissible. A length structure $L^\smgc$ on the class of admissible curves is given by
$$L^\smgc(\gamma)= \int_a^b \sqrt{(\alpha')^2 + (f\circ \alpha)^2 |\beta'|^2}.$$ 
Here  $\alpha'$ is the derivative of $\alpha$ and $|\beta'|$ is the metric speed of $\beta$ that are both defined $\mathcal L^1$-a.e. in $I$. 

The warped product {\it semi-metric} $\de_{\scriptscriptstyle I\times_f F}$ on $I\times F$ is  the intrinsic distance associated to the length structure $L^\smgc$, i.e. for 2 points $(s,x)$ and $(t,y)$ we define 
$\de_{\smgc}((s,x), (t,y)):= \inf L^\smgc(\gamma)$
where the infimum is  w.r.t. rectifiable curves $\gamma$ that connect the points $(s,x)$ and $(t,y)$ in $I\times F$.  
$\de_{\smgc}$ is symmetric and satisfies the $\triangle$-inequality.

\begin{definition} The warped product metric space $B\times_f F$ between $\overline I = B$, $F$ and $f$ is given by 
$$( B\times F/\sim, \de_{\smwp}) \  \mbox{where } (p,x)\sim (q,y)  \Longleftrightarrow  \de_{\smwp}((p,x), (q,y))=0.$$
The metric space $B\times_f F$ is  intrinsic. 
\end{definition}\begin{theorem}[Alexander-Bishop, {\color{black} \cite[Theorem 3.1]{albi0}}]\label{th:albi0}
Let $\gamma=(\alpha, \beta)$ be a minimizer w.r.t. $L^{\scriptscriptstyle I \times_f F}$ in $B\times_f F$ parametrized proportional to arclength. Assume {\color{black} $f$ is continuous and }$f>0$. Then 
\begin{enumerate}
\item[(a)] $\beta$ is a minimizer in $F$; 
\item[(b)] (Fiber independence) $\alpha$ is independent of $F$, except for the total height, i.e. the length $L^\sF(\beta)$ of $\beta$. More precisely, if $\hat F$ is another strictly intrinsic metric space and $\hat \beta$ is a minimizing geodesic in $\bar F$ with the same length and speed as $\beta$, then $(\alpha, \hat \beta)$ is a minimizer in $B\times_f\hat F$. 
\item[(c)] (Energy equation, version 1) $\beta$ has speed $\frac{c_\gamma}{f^2\circ \alpha}$ for a constant $c_\gamma$;
\item[(d)] (Energy equation, version 2) 
$\alpha$ satisfies $\frac{1}{2} |\alpha'|^2 + \frac{1}{2f^2\circ \alpha}= E$ a.e. where $E$ is the proprotionality constant of the parametrization of $\gamma$. 
\end{enumerate}
\end{theorem}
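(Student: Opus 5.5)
The approach is to reduce everything to a two-dimensional problem. Fix a minimizer $\gamma=(\alpha,\beta)$, parametrized proportionally to arclength on $[a,b]$. Let $\sigma(t)=L^\sF(\beta|_{[a,t]})$ be the arclength function of $\beta$, and set $L=\sigma(b)$. Then $|\beta'|=\sigma'$ a.e., so
$$L^\smgc(\gamma)=\int_a^b\sqrt{(\alpha')^2+(f\circ\alpha)^2(\sigma')^2}=L^{\scriptscriptstyle B\times_f\R}\big((\alpha,\sigma)\big).$$
Thus $\gamma$ has the same length as the planar curve $(\alpha,\sigma)$ in the warped product $B\times_f\R$.

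\textbf{Step 1 (distance formula).} I will show that
$$\de_{\smwp}\big((s,x),(t,y)\big)=D\big(s,t,|x,y|\big),$$
where $D(s,t,r)$ is the warped distance in $B\times_f\R$ between $(s,0)$ and $(t,r)$, and that $D$ is nondecreasing in $r$.

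For "$\geq$": any admissible curve $(\alpha_1,\beta_1)$ in $B\times_f F$ projects to the planar curve $(\alpha_1,\sigma_1)$, with $\sigma_1$ the arclength of $\beta_1$. This planar curve has the same length and fiber displacement $\sigma_1(b)\ge|x,y|$. Monotonicity of $D$ in $r$ then gives the inequality. Monotonicity itself holds because reflecting about any level $\sigma=\text{const}$ is an isometry of $B\times_f\R$; a planar curve reaching height $r'>r$ can therefore be folded to end at height $r$ without increasing its length.

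For "$\le$": take a planar curve $(\alpha_1,\sigma_1)$ with $\sigma_1$ monotone, and lift it as $(\alpha_1,c\circ\sigma_1)$, where $c$ is a unit-speed minimizing geodesic in $F$ from $x$ to $y$. This is where strict intrinsicness of $F$ is used. The lift has the same length. Restricting to monotone $\sigma_1$ is harmless, again by the folding argument.

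\textbf{Step 2 (parts (a) and (b)).} By Step 1 and minimality of $\gamma$,
$$L^\smgc(\gamma)=D\big(\alpha(a),\alpha(b),|\beta(a),\beta(b)|\big)\le D\big(\alpha(a),\alpha(b),L\big)\le L\big((\alpha,\sigma)\big)=L^\smgc(\gamma),$$
so equality holds throughout. Since $f>0$, $f\circ\alpha$ is bounded below by a positive constant on the compact range of $\alpha$. Hence $D$ is strictly increasing in $r$: a strict fold saves a definite amount of length. So $L=|\beta(a),\beta(b)|$, which is (a).

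The same equality shows that $(\alpha,\sigma)$ minimizes in $B\times_f\R$. It also shows that, for any other strictly intrinsic $\hat F$ and a minimizing $\hat\beta$ with the same length and speed, $(\alpha,\hat\beta)$ has length $D(\alpha(a),\alpha(b),L)$, which equals the distance of its endpoints by Step 1. That is (b).

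\textbf{Step 3 (parts (c) and (d)).} These are the Noether/Clairaut identity for the translation symmetry in $\sigma$. Since $f$ is only continuous, I will not use geodesic equations. Instead, I vary only the fiber coordinate: $\sigma_\varepsilon=\sigma+\varepsilon\varphi$ with $\varphi\in C_c^\infty((a,b))$, keeping $\alpha$ fixed. The length functional
$$\varepsilon\mapsto\int_a^b\sqrt{(\alpha')^2+(f\circ\alpha)^2(\sigma'+\varepsilon\varphi')^2}$$
is differentiable at $0$ by dominated convergence. The speed is constant, equal to $v>0$, and $f\circ\alpha$ is bounded. Minimality gives
$$\int_a^b \frac{(f\circ\alpha)^2\sigma'}{v}\,\varphi'=0\quad\text{for all such }\varphi.$$
By the du Bois-Reymond lemma, $(f\circ\alpha)^2\sigma'=c_\gamma$ a.e., which is (c).

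Substituting into $(\alpha')^2+(f\circ\alpha)^2(\sigma')^2=v^2$ gives
$$\tfrac12|\alpha'|^2+\frac{c_\gamma^2}{2(f\circ\alpha)^2}=E,\qquad E=\tfrac{v^2}{2}.$$
This is (d), which I read with the constant $c_\gamma^2$ in the numerator, presumably normalized to $1$ in the statement.

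\textbf{Main obstacle.} The step I expect to need the most care is Step 1: getting the distance formula and the strict monotonicity of $D$ rigorous for merely continuous $f$. This requires the folding argument and the lift via minimizing geodesics. Curves passing near $\partial I$, where $f$ may degenerate, should be excluded by working on the compact range of $\alpha$ under the standing assumption $f>0$.
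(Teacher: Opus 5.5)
The paper gives no proof of this theorem. It is quoted from Alexander--Bishop \cite{albi0} and used as a black box, so there is no in-paper argument to compare with. Judged on its own, your argument is a correct, self-contained proof. You pass to the planar curve $(\alpha,\sigma)$ in $B\times_f\R$, show that the warped distance equals $D(s,t,|x,y|)$, and then run a first variation in the fiber only. This has two advantages: strict intrinsicness of $F$ is used only in the lifting step, and no differentiability of $f$ is needed. The variation $\sigma+\varepsilon\varphi$ is legitimate precisely because it lives in $\R$, where non-monotone and negative fiber components are admissible; varying $\beta$ directly in a general $F$ would not be.

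Your reading of (d) is right. The identity is $\tfrac12|\alpha'|^2+\tfrac{c_\gamma^2}{2f^2\circ\alpha}=E$. The printed version only makes sense under the normalization $c_\gamma=1$: for a vertical minimizer ($\beta$ constant, $c_\gamma=0$, $|\alpha'|$ constant) it would force $f\circ\alpha$ to be constant.

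Two steps need to be phrased differently.

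\emph{The lifting step.} The reduction to monotone $\sigma_1$ in the ``$\le$'' direction does not follow from reflection-folding, which need not produce a monotone function. You do not need monotonicity at all. Lift $(\alpha_1,\sigma_1)$ as $(\alpha_1,c\circ\pi\circ\sigma_1)$, where $\pi:\R\to[0,r]$ is the nearest-point retraction. Since $\pi$ is $1$-Lipschitz, the metric speed of the lift is at most $|\sigma_1'|$ a.e., so the lift is no longer than the planar curve.

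\emph{Strictness in Step 2.} Reflection preserves length, so a ``fold'' by itself saves nothing, and you do not need $D$ to be strictly increasing in general. Apply truncation to the actual minimizer instead. Suppose $r:=|\beta(a),\beta(b)|<L$. Since $\sigma$ is nondecreasing, $\{\sigma>r\}=(u_0,b]$ with $\int_{u_0}^b\sigma'=L-r>0$. Consider the curve $(\alpha,\min\{\sigma,r\})$, which ends at $(\alpha(b),r)$. On $(u_0,b]$ its integrand drops from $\sqrt{(\alpha')^2+(f\circ\alpha)^2(\sigma')^2}$ to $|\alpha'|$. This is a strict decrease on the positive-measure set where $\sigma'>0$, because $f\circ\alpha>0$. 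Hence $D(\alpha(a),\alpha(b),r)<L((\alpha,\sigma))=D(\alpha(a),\alpha(b),L)$, which contradicts your chain of equalities and gives (a).

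With these two adjustments, parts (a)--(d) follow exactly as you describe.
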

\subsection{Generalized cones a Lorentzian length spaces}\hspace{0mm}
 In the following  we recall  the theory of Lorentzian generalized cones developped by Alexander, Graf, Kunzinger and S\"amann in \cite{agks}.

Let  $\gamma: [a,b]\rightarrow I\times F$ be again  admissible, i.e. $\alpha$ and $\beta$ are Lipschitz continuous.
We call $\gamma$ 
\begin{align*}
\begin{Bmatrix} \mbox{timelike}\\
\mbox{null} \\
\mbox{causal} 
\end{Bmatrix} \mbox{ if } \  -(\alpha')^2 + (f\circ \alpha)^2 |{\beta}'|^2\ \  \begin{Bmatrix} <0 \\
=0\\
\leq 0
\end{Bmatrix} \mbox{ $\mathcal L^1$-a.e. in }  [a,b].
\end{align*} 
The path $ \gamma$ is called future/past directed if $ \alpha$ is strictly monotonically increasing/decreasing, i.e. $ \alpha'>0$ or $ \alpha'<0$ a.e.  
\begin{remark}
Points $(s,x)$ and $(t,y)$ in $I\times F$ are chronologically related, denoted by $(s,x)\ll (t,y)$, if there exists a future directed timelike curve from $(s,x)$ to $(t,y)$. Moreover, points $(s,x)$ and $(t,y)$ are causally related, denoted with $(s,x)\leq (t,y)$, if there exists a future directed causal curve from $(s,x)$ to $(t,y)$, or $(s,x)= (t,y)$. 
\end{remark}

Let $\gamma:[a,b]\rightarrow I\times F$ be a future directed causal curve.  Then a Lorentzian  length structure is defined via
$$L^\slgc(\gamma)= \int_a^b \sqrt{ (\alpha')^2 - (f\circ \alpha)^2 |\beta'|^2} \de t. $$
\begin{proposition}[{\cite[Proposition 3.15]{agks}}]\label{prop:limsup}
Let $\gamma_i: [a,b] \rightarrow I\times F$, $i\in \N\cup\{\infty\}$, be causal curves such that $\gamma_i\rightarrow \gamma_\sinfty$ pointwise. 
Then
$$\limsup_{i\rightarrow \infty} L^\slgc(\gamma_i)\leq L^\slgc(\gamma).$$
\end{proposition}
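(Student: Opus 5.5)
The plan is to reduce the statement to upper semicontinuity of the integrand, using the variational description of $L^\slgc$. Write $\gamma_i=(\alpha_i,\beta_i)$ and $\gamma_\sinfty=(\alpha,\beta)$. First I would localize. Since all curves are causal and future directed, $\alpha_i$ is monotone, and pointwise convergence on $[a,b]$ gives $\alpha_i(a)\to\alpha(a)$ and $\alpha_i(b)\to\alpha(b)$. The images of the $\alpha_i$ therefore eventually lie in a compact subinterval $J\subset I$. On $J$ the function $f$ is uniformly continuous and bounded below by some $c>0$.

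The key tool is a partition characterization of the length. For a causal curve I would prove
\[
L^\slgc(\gamma)=\inf_{\mathcal P}\sum_k \sqrt{(\Delta_k\alpha)^2-(\underline f_k)^2\,\de_\sF(\beta(t_k),\beta(t_{k+1}))^2}\,,
\]
or at least the inequality ``$\le$'' up to an error $o(1)$ as the mesh tends to zero. Here $\underline f_k$ is the minimum of $f$ on $\alpha([t_k,t_{k+1}])$, and the infimum runs over partitions $\mathcal P=\{t_k\}$. The inequality $L(\gamma|_{[t_k,t_{k+1}]})\le\sqrt{(\Delta\alpha)^2-\underline f^2(\Delta\de)^2}$ follows from the reverse Cauchy--Schwarz inequality. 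The function $(x,y)\mapsto\sqrt{x^2-y^2}$ is concave on the cone $x\ge y\ge 0$, so the reverse triangle (Jensen) inequality gives
\[
\int\sqrt{\alpha'^2-\underline f^2|\beta'|^2}\le\sqrt{\Big(\int\alpha'\Big)^2-\underline f^2\Big(\int|\beta'|\Big)^2},
\]
and we also have $\int|\beta'|\ge\de_\sF$. Conversely, equality in the limit of fine partitions holds by Lebesgue differentiation, because $\alpha'$ and $|\beta'|$ are a.e. limits of difference quotients and $f\circ\alpha$ is continuous. I would verify this converse via the standard fact that for absolutely continuous data the sums converge to the integral.

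With this characterization the proof proceeds as follows. Given $\epsilon>0$, choose a partition $\mathcal P$ of mesh so small that the sum for $\gamma_\sinfty$ is less than $L^\slgc(\gamma_\sinfty)+\epsilon$, and such that the oscillation of $f$ over each $\alpha([t_k,t_{k+1}])$ is at most $\epsilon$. For $\gamma_i$ I apply the ``$\le$'' inequality on each subinterval, using the minimum $\underline f_{k,i}$ of $f$ over $\alpha_i([t_k,t_{k+1}])$. Pointwise convergence together with monotonicity gives $\alpha_i([t_k,t_{k+1}])\to\alpha([t_k,t_{k+1}])$ as intervals, so $\liminf_i\underline f_{k,i}\ge\underline f_k$. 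Moreover $\Delta_k\alpha_i\to\Delta_k\alpha$ and $\de_\sF(\beta_i(t_k),\beta_i(t_{k+1}))\to\de_\sF(\beta(t_k),\beta(t_{k+1}))$. Since the map $(x,y,z)\mapsto\sqrt{x^2-y^2z^2}$ is continuous on the causal region and decreasing in $y$, each summand has $\limsup$ at most the corresponding summand for $\gamma_\sinfty$. Summing the finitely many terms gives $\limsup_i L^\slgc(\gamma_i)\le L^\slgc(\gamma_\sinfty)+\epsilon$.

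The main obstacle is the converse approximation step, namely showing that the partition sums of the limit curve actually approach $L^\slgc(\gamma_\sinfty)$. This requires care where $\alpha'^2-f^2|\beta'|^2$ is near $0$, since there the square root is not Lipschitz. I would handle that region by approximating uniformly with $\sqrt{\max(x^2-y^2,\delta)}$ and then letting $\delta\to0$.
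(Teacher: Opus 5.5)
Your proof is correct. The paper gives no argument of its own for this statement; it quotes \cite[Proposition 3.15]{agks}. So there is nothing internal to compare against, and your route is a clean self-contained proof. It runs as follows: set $S_{\mathcal P}(\gamma)=\sum_k\sqrt{(\Delta_k\alpha)^2-\underline f_k^2 d_k^2}$ with $d_k=\de_\sF(\beta(t_k),\beta(t_{k+1}))$, show $L^\slgc(\gamma)\le S_{\mathcal P}(\gamma)$ for every partition and every future directed causal $\gamma$, and observe that each $S_{\mathcal P}$ is continuous under pointwise convergence once the $\alpha_i$ are confined to a compact $J\subset I$. Then $L^\slgc$ is an infimum of continuous functionals, hence upper semicontinuous. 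The easy inequality works as you say: use $f\circ\alpha\ge\underline f_k$, Jensen for the concave $1$-homogeneous function $\sqrt{x^2-y^2}$ on the causal cone, monotonicity in $y$, and $\int|\beta'|\ge d_k$. The oscillation condition on $f$ in your choice of partition is not needed.

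The one step that needs a better argument than your sketch is $\inf_{\mathcal P}S_{\mathcal P}(\gamma_\sinfty)\le L^\slgc(\gamma_\sinfty)$. The delicate point there is the fiber term, not the square root near the null cone. On $[t_k,t_{k+1})$ the quotient $d_k/(t_{k+1}-t_k)$ straddles $t$. Existence of the metric derivative at $t$ only bounds it from above, via the triangle inequality. For example, $\beta(s)=|s|$ in $\R$ has $|\beta'|(0)=1$ but $|\beta(h)-\beta(-h)|/(2h)=0$. So ``Lebesgue differentiation'' does not apply directly.

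An easy fix is to argue in $L^1$. Let $g_{\mathcal P}$ and $\bar g_{\mathcal P}$ be the step functions equal on $[t_k,t_{k+1})$ to $d_k/(t_{k+1}-t_k)$ and to the mean of $|\beta'|$ there, respectively.
\begin{itemize}
\item Then $0\le g_{\mathcal P}\le\bar g_{\mathcal P}$ and $\int_a^b(\bar g_{\mathcal P}-g_{\mathcal P})\,\de t=L^\sF(\beta)-\sum_k d_k\to0$ as the mesh tends to $0$, because partition sums of a continuous rectifiable curve converge to its length.
\item Also $\bar g_{\mathcal P}\to|\beta'|$ in $L^1$. Hence $g_{\mathcal P}\to|\beta'|$ in $L^1$.
\item Likewise the partition averages of $\alpha'$ converge to $\alpha'$ in $L^1$, and $\underline f_{\mathcal P}\to f\circ\alpha$ uniformly.
\end{itemize}
The map $(x,y,z)\mapsto\sqrt{\max\{x^2-y^2z^2,0\}}$ is continuous, and the integrands are bounded by $\mathrm{Lip}(\alpha)$. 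Convergence in measure plus dominated convergence therefore give $S_{\mathcal P}(\gamma_\sinfty)\to L^\slgc(\gamma_\sinfty)$. In particular your $\delta$-regularization is unnecessary: only continuity of the square root on the closed cone is used, not a Lipschitz bound.
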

For two points $(p,x)$ and $(q,y)$ with $(p,x)\leq (q, y)$  the time separation function is defined via
$$
\tau_{\slgc}((p,x), (q,y)):= \sup L^\slgc(\gamma)$$
where the sup is w.r.t. all future directed causal curves $\gamma$ that connect  $(p,x)$ and $(q,y)$.  If there is no such curve, one sets $\tau_\slgc((p,x), (q,y))=0$.

 The time separation function $\tau_{\slgc}$  satisfies the reverse $\triangle$-inequality 
$$\tau_{\slgc}((p,x), (q,y)) \geq \tau_{\slgc}((p,x), (r,z))+ \tau_{\slgc}((r,z), (q,y))$$
for all $(p,x), (q,y), (r,z)\in I\times F$. 
Similarly, the signed time separation function $\ell_{\slgc}$ is defined as before.
\begin{definition}[\cite{agks}]
The generalized  cone $\gcl$ between $I$, $f$ and $F$ is  $$(I\times F, \ll, \leq, \tau_{\slwp})$$
where $I\times F$ is the metric  product of $I$ and $F$ equipped with the standard $L^2$-product metric.
\end{definition}
\begin{theorem}[Limit curve theorem, {\cite[Theorem 3.16]{agks}}]\label{th:lim_cur}
Let  $\gamma_i=(\alpha_i, \beta_i):[a,b] \rightarrow \gcl$, $i\in \N\cup \{\infty\}$, be absolutely continuous curves such that  $\gamma_i$ for every $i\in \N$ is future/past directed casual, $\alpha'\neq 0$ $\mathcal L^1$-a.e. and $\gamma_i\rightarrow \gamma_\sinfty$ pointwise. Then $\gamma_\sinfty$ is causal. 
\end{theorem}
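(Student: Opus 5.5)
The statement is the limit curve theorem: if $\gamma_i=(\alpha_i,\beta_i)$ are future directed causal with $\alpha_i'\neq 0$ a.e. and $\gamma_i\to\gamma_\sinfty=(\alpha_\sinfty,\beta_\sinfty)$ pointwise, then $\gamma_\sinfty$ is causal. I would reduce this to a statement about lengths, namely that the time coordinate dominates the warped spatial length, and pass to the limit using lower semicontinuity of the metric length of the fiber component.

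\textbf{Step 1: a characterization of causality by the base coordinate.} Since $\gamma_i$ is future directed causal, $\alpha_i$ is nondecreasing and $(f\circ\alpha_i)|\beta_i'|\le \alpha_i'$ a.e. Integrating over $[s,t]$ gives
\[
\int_s^t f(\alpha_i(r))\,|\beta_i'|(r)\,\de r\;\le\;\alpha_i(t)-\alpha_i(s).
\]
The pointwise limit $\alpha_\sinfty$ is then nondecreasing. We work on a compact set $[\alpha_\sinfty(a)-\delta,\alpha_\sinfty(b)+\delta]\subset I$, where $f\ge c>0$. This gives a uniform Lipschitz-type control on $\beta_i$: $\de_\sF(\beta_i(s),\beta_i(t))\le c^{-1}(\alpha_i(t)-\alpha_i(s))$. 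The $\alpha_i$ need not be equi-Lipschitz. However, they are monotone with a uniformly bounded range, so by Helly and the pointwise convergence, $\alpha_\sinfty$ is monotone. The bound passes to the limit: $\de_\sF(\beta_\sinfty(s),\beta_\sinfty(t))\le c^{-1}(\alpha_\sinfty(t)-\alpha_\sinfty(s))$.

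\textbf{Step 2: passing to the limit in the weighted inequality.} To get the sharp inequality with the weight $f$, subdivide $[s,t]$ into small subintervals $[t_{k},t_{k+1}]$. On each piece $f\circ\alpha_i\ge \min_{J_k} f - o(1)$, where $J_k$ is a small interval containing $\alpha_\sinfty([t_k,t_{k+1}])$. For large $i$ this uses pointwise convergence at the endpoints together with monotonicity, which controls $\alpha_i$ on the whole subinterval. Hence
\[
\Big(\min_{J_k}f\Big)\,\de_\sF(\beta_i(t_k),\beta_i(t_{k+1}))\le \alpha_i(t_{k+1})-\alpha_i(t_k)+o(1).
\]
Letting $i\to\infty$, summing over $k$, and refining the partition, uniform continuity of $f$ on compacts gives
\[
\int f(\alpha_\sinfty)\,\de L^\sF(\beta_\sinfty)\le \alpha_\sinfty(t)-\alpha_\sinfty(s).
\]
The left side is a Riemann–Stieltjes-type integral against the length measure of $\beta_\sinfty$.

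\textbf{Step 3: the main obstacle, absolute continuity and the pointwise inequality.} The hypothesis that $\gamma_\sinfty$ is absolutely continuous (as assumed in the statement) gives $\alpha_\sinfty$ and $\beta_\sinfty$ absolutely continuous. The length measure is then $|\beta_\sinfty'|\,\de r$, so differentiating the integrated inequality (Lebesgue points) yields $f(\alpha_\sinfty)|\beta_\sinfty'|\le \alpha_\sinfty'$ a.e., which is exactly causality. The delicate point is the direction condition. Since $\alpha_\sinfty'\ge0$ a.e., $\gamma_\sinfty$ is future directed causal in the weak sense. Where $\alpha_\sinfty'=0$ we also have $|\beta_\sinfty'|=0$, so the curve is constant there. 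If the definition requires strict monotonicity, I would reparametrize to collapse these constancy intervals, or treat the degenerate constant case separately. Verifying that this reparametrization is compatible with the paper's notion of causal curve, in the sense of \cite{agks}, is the step I expect to need the most care. The past directed case is symmetric.
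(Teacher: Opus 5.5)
The paper gives no proof of this statement; it quotes it from \cite{agks}, so there is no internal argument to compare against. Your proof is correct for the stated conclusion.

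\textbf{Steps 1 and 2.} The key point is monotonicity of the $\alpha_i$. With pointwise convergence at the finitely many partition points, it confines $\alpha_i([t_k,t_{k+1}])$ to a small neighbourhood of $[\alpha_\infty(t_k),\alpha_\infty(t_{k+1})]$. The passage from the partition sums to $\int_s^t f(\alpha_\infty)|\beta_\infty'|\,\de r$ is legitimate for two reasons:
\begin{itemize}
\item $\alpha_\infty$ is continuous, by the assumed absolute continuity of $\gamma_\infty$, so $\min_{J_k} f \to f\circ\alpha_\infty$ uniformly as the mesh tends to zero.
\item $\beta_\infty$ has finite length on $[s,t]$ by your Step 1 bound, so weighted inscribed sums converge to the integral against the length measure.
\end{itemize}
Lebesgue differentiation then gives $f(\alpha_\infty)|\beta_\infty'|\le\alpha_\infty'$ a.e. That is exactly causality as defined in the paper.

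\textbf{The direction issue in Step 3.} Your worry here is unnecessary, and the remedy you propose is not admissible. You cannot reparametrize $\gamma_\infty$, because the claim concerns that given curve. The hypothesis $\alpha'\neq 0$ a.e. handles this. Read it as a condition on the limit; for the $\gamma_i$ it is automatic from future-directedness. Combined with $\alpha_\infty'\ge 0$ a.e., it gives $\alpha_\infty'>0$ a.e. Hence $\alpha_\infty$ is strictly increasing and $\gamma_\infty$ is future directed, with nothing to collapse. For the bare conclusion ``causal'' you only need the a.e.\ inequality, which you already have.

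\textbf{A shorter route for Step 2.} Let $F$ be a primitive of $1/f$ on the relevant compact range. Causality of $\gamma_i$ and the chain rule for the absolutely continuous monotone $\alpha_i$ give
\[
\de_\sF(\beta_i(s),\beta_i(t))\le\int_s^t \frac{\alpha_i'}{f\circ\alpha_i}\,\de r = F(\alpha_i(t))-F(\alpha_i(s)).
\]
This passes directly to the pointwise limit. Now differentiate at points where $\alpha_\infty$ is differentiable and $\beta_\infty$ has a metric derivative; such points are a.e.\ for absolutely continuous curves. This yields $|\beta_\infty'|\le \alpha_\infty'/f(\alpha_\infty)$ a.e., with no partitions, Stieltjes integrals, or separate Step 1 estimate.
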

\begin{theorem}[Fiber independence, {\cite[Theorem 3.29]{agks}}]\label{th:fiber_ind}
 Let $\gamma=(\alpha,\beta): [0,b] \rightarrow \gcl$  be future directed causal and maximal. 
 \begin{enumerate}
  \item The fiber component $\beta$ is a length minimizer in $X$.
  \item\label{thm-structure-of-geod-fib-ind} Fiber independence holds. The base component $\alpha$ depends only on the length of $\beta$. More precisely, let $(X',\de')$ be another geodesic length space, $\beta'$ minimizing in $X'$ with 
$L^{\de'}(\beta')=L^{\de}(\beta)$ and the same speed as $\beta$, i.e, $v_\beta=v_{\beta'}$. Then $\gamma':=(\alpha,\beta')$ is a 
future directed maximal causal curve in $Y':=I\times_f X'$, which is timelike if $\gamma$ is timelike in $Y$.
 \item If $\gamma$ is timelike and parametrized with respect to arclength, then $v_\beta$ is proportional to 
$\frac{1}{(f\circ\alpha)^2}$. 
\item If $\gamma$ is timelike, then it has an (absolutely continuous) parametrization with respect to arclength, i.e., 
$-\dot\alpha^2 + (f\circ\alpha)^2 v_\beta^2 = -1$ almost everywhere. 

 \end{enumerate}
\end{theorem}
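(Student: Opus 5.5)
The plan is to reduce everything to a one–dimensional variational problem on the base. The key observation is that the Lorentzian length $L^{\slgc}(\gamma)=\int_a^b\sqrt{(\alpha')^2-(f\circ\alpha)^2|\beta'|^2}$ of an admissible causal curve depends only on $\alpha$ and on the speed function $u:=|\beta'|$ of the fiber component, not on the fiber curve itself.

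\emph{Step 1 (fiber component minimizing).} Suppose $\beta$ is not a minimizer. Then $\de(\beta(0),\beta(b))=\lambda L^{\de}(\beta)$ for some $\lambda<1$. Choose a minimizer $\hat\beta$ in $X$ between the same endpoints, and reparametrize it so that its arclength function is $\lambda s_\beta(t)$, where $s_\beta(t)=\int_0^t|\beta'|$. Its speed is then $\lambda|\beta'|$. The curve $(\alpha,\hat\beta)$ is still future directed causal, and its integrand is pointwise at least that of $\gamma$. The inequality is strict wherever $\alpha'>0$ and $|\beta'|>0$. Since $\alpha$ is strictly increasing, $\alpha'>0$ on a set of positive measure. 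Because $\beta$ is nonconstant (it has positive length), $|\beta'|>0$ on a set of positive measure as well.
\begin{itemize}
\item If $\gamma$ is not null, these sets overlap in positive measure and we get a strictly longer causal curve, contradicting maximality.
\item If $\gamma$ is null (so $\tau=0$ between its endpoints), the new curve has positive length, which again contradicts $\tau=L^{\slgc}(\gamma)=0$.
\end{itemize}

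\emph{Step 2 (fiber independence).} Let $\tilde\gamma=(\tilde\alpha,\tilde\beta)$ be any future directed causal curve in $Y'$ between the endpoints of $\gamma'$. Its fiber length satisfies $\tilde L\ge L^{\de'}(\beta')=L^{\de}(\beta)=:L$. Let $\bar\beta$ denote $\beta$ parametrized by arclength, and set $\hat s(t)=\min\{s_{\tilde\beta}(t),L\}$. This function is monotone and Lipschitz, with $\hat s'\le|\tilde\beta'|$ a.e. The curve $(\tilde\alpha,\bar\beta\circ\hat s)$ in $Y$ connects the endpoints of $\gamma$ and is causal. Its length is at least $L^{\slgc}(\tilde\gamma)$. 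Hence $\tau_{Y'}\le\tau_Y$ for these endpoints.

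Since $\gamma$ and $\gamma'$ have identical integrands (same $\alpha$, same speed), we get $L(\gamma')=L(\gamma)=\tau_Y\ge\tau_{Y'}$. So $\gamma'$ is maximal. It is timelike exactly when $\gamma$ is, because the pointwise causal character is the same.

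\emph{Steps 3–4 (energy equation and arclength parametrization).} Fix $\alpha$ and consider the problem of maximizing $\Phi(u)=\int\sqrt{(\alpha')^2-f(\alpha)^2u^2}$ over speed profiles $u\ge0$ with $\int u=L$ and $f(\alpha)u\le|\alpha'|$. By Step 2, any admissible $u$ can be realized along $\bar\beta$, so maximality of $\gamma$ forces $u=|\beta'|$ to be a maximizer.

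The integrand is strictly concave in $u$. A mass-exchange argument applies: move $\delta$ units of fiber length from a small set $A$ to a small set $B$, both Lebesgue-density sets. The first variation must vanish, which forces
\[
\frac{f(\alpha)^2u}{\sqrt{(\alpha')^2-f(\alpha)^2u^2}}
\]
to be a.e.\ constant on the set where the denominator is positive.

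For (4), let $s(t)=L^{\slgc}(\gamma|_{[0,t]})$. This is absolutely continuous with derivative equal to the integrand. Maximality together with the exchange argument above excludes a positive-measure set where a timelike maximal curve has zero integrand: there one could transfer fiber length elsewhere and gain length. Hence $s$ is strictly increasing, and reparametrizing by $s^{-1}$ yields $-\dot\alpha^2+(f\circ\alpha)^2v_\beta^2=-1$ a.e. In this parametrization the constant from the first variation becomes exactly $f^2 v_\beta=c$, i.e.\ $v_\beta\propto 1/(f\circ\alpha)^2$, which is (3).

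The main obstacle I expect is this last part. With $f$ merely continuous and $\alpha'$ only $L^1$, the Euler–Lagrange step must be done by hand via measure-theoretic exchanges rather than by smooth calculus. The degenerate set where the integrand vanishes must also be handled carefully, which is what makes the absolutely continuous arclength parametrization possible.
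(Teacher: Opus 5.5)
The paper gives no proof of this statement; it quotes it from Alexander--Graf--Kunzinger--S\"amann and later uses it mainly in the form $\ell_{-I\times_f X}(s,x,t,y)=\ell_{-I\times_f[0,L]}(s,0,t,\de_X(x,y))$, in the proof of Theorem \ref{th:convergencecones}. Your argument is sound, and Steps 1--2 are correct.

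In fact your folding construction in Step 2 does not use maximality. Run in both directions with only fiber minimizers, it proves exactly that identity: $\tau$ depends only on the base coordinates and the fiber distance.

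In Step 1 the case split is unnecessary. For a causal curve $(f\circ\alpha)|\beta'|\le\alpha'$ a.e., so $\{|\beta'|>0\}\subset\{\alpha'>0\}$. The integrand therefore increases strictly on $\{|\beta'|>0\}$, which has positive measure whenever $\beta$ is nonconstant.

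In Steps 3--4 two points need tightening.

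\emph{The variation in Step 3.} The exchange argument gives only one-sided variations. Where $|\beta'|=0$ you may add but not remove fiber length; at null points you may only remove it. So you get inequalities, not a vanishing first variation, although constancy of $h=(f\circ\alpha)^2|\beta'|/\sqrt{(\alpha')^2-(f\circ\alpha)^2|\beta'|^2}$ on $\{\alpha'>0\}$ still follows. You can avoid density points and dominated convergence entirely by using the strict concavity you noted. Put $a=\alpha'$, $F=f\circ\alpha$.
\begin{itemize}
\item The pointwise maximizer of $\sqrt{a^2-F^2u^2}+\kappa u$ is $u_\kappa=\kappa a/(F\sqrt{F^2+\kappa^2})$.
\item The map $\kappa\mapsto\int u_\kappa$ increases continuously from $0$ to $\int a/F$. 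This limit exceeds $L=\int|\beta'|$ strictly because $\gamma$ is timelike, so some $\kappa$ has $\int u_\kappa=L$.
\item Integrating the pointwise inequality gives $\Phi(u)\le\Phi(u_\kappa)$ for every admissible $u$, with equality only if $u=u_\kappa$ a.e.
\item Since $u_\kappa$ is realizable along $\bar\beta$, maximality forces $|\beta'|=u_\kappa$, i.e.\ $h\equiv\kappa$.
\end{itemize}
Because $h$ is homogeneous of degree $0$ in $(\alpha',|\beta'|)$, this constancy transfers to any reparametrization. That is the clean route to (3). It is better than redoing the variation in the arclength parametrization, which may only be absolutely continuous and hence not admissible.

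\emph{The argument for (4).} With the paper's definition, timelike already means $(\alpha')^2-(f\circ\alpha)^2|\beta'|^2>0$ a.e. So $s'>0$ a.e. holds by definition, and maximality plays no role. What you should state is that $s'>0$ a.e. is what makes $s^{-1}$ absolutely continuous; mere strict monotonicity of $s$ would not suffice. That is what gives an absolutely continuous arclength parametrization and justifies the chain rule.

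If you intended a weaker notion of timelike, your exclusion claim is false. On $\{\alpha'=0\}$ one also has $|\beta'|=0$, and this set can have positive measure for a strictly increasing absolutely continuous $\alpha$. There is no fiber length to transfer there: the set is a parametrization artifact that maximality cannot rule out. In that setting, first reparametrize by $\alpha$. This is legitimate because $\de(\beta(t_1),\beta(t_2))\le(\alpha(t_2)-\alpha(t_1))/\min_{\alpha([0,b])}f$, so $\beta\circ\alpha^{-1}$ is Lipschitz.
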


\begin{theorem}[Properties of generalized cones]\label{Pr: various properties}
    Let $Y=-\!I\times_fF$ be a generalized cone as described above and $(F,\de)$ a metric space.
    \begin{itemize}
        \item[(i)] If $F$ is a geodesic length space 
        every maximizing causal curve $\gamma = (\alpha,\beta): [-b,b]\to Y$ has a causal character,
        i.e., $\gamma$ is either timelike or null \textnormal{\cite[Cor.\ 3.30]{agks}}.
        \item[(ii)] If $F$ is locally compact  the length $L^{\slgc}$ coincides with the $\tau$-length $L^{\tau}$ on future-directed causal curves  \textnormal{\cite[Prop.\ 4.7]{agks}}.
        \item[(iii)] If $F$ is a geodesic and locally compact length space then $Y$ is a strongly causal and regular Lorentzian length space \textnormal{\cite[Cor.\ 4.9]{agks}}.
        \item[(iv)] If $F$ is a locally compact, complete length space then $Y$ is globally hyperbolic \textnormal{\cite[Cor.\ 4.11]{agks}}.
        \item[(v)] If $X$ is geodesic, then $Y$ is geodesic too. Furthermore, any two timelike related points can be connected by a timelike geodesic \textnormal{\cite[Cor.\ 4.11]{agks}}.
        In particular, we have that the time separation function $\tau_{\slgc}$ is continuous under these assumptions. 
    \end{itemize}
\end{theorem}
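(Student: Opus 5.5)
The theorem collects results from \cite{agks}, so one could simply cite them. Below is how I would reprove them from the tools already recalled: the limit curve theorem (Theorem \ref{th:lim_cur}), upper semicontinuity of $L^{\slgc}$ (Proposition \ref{prop:limsup}) and fiber independence (Theorem \ref{th:fiber_ind}). Throughout I use one elementary fact. For a future directed causal $\gamma=(\alpha,\beta)$ we have $|\beta'|\le \alpha'/(f\circ\alpha)$ a.e. Hence on a compact base interval $[s,t]\subset I$, where $f\ge c>0$, the curve satisfies $L^F(\beta)\le (t-s)/c$, and its $\de$-length in the product metric is at most $(1+1/c)(t-s)$.

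\textbf{Items (iv) and (ii).} I would start with (iv). The projection $T(p,x)=p$ strictly increases along future directed causal curves, so $\leq$ is antisymmetric. By the elementary fact, every causal curve in a compact set $K$ has $\de$-length at most a constant depending only on the $T$-range of $K$; this gives non-totally imprisoning. For $(p,x)\le(q,y)$ the diamond $J((p,x),(q,y))$ lies in $[p,q]\times \overline{B}_R(x)$ with $R=\int_p^q 1/f$. That set is compact by Hopf--Rinow, since $F$ is complete, locally compact and a length space. It remains to show the diamond is closed. Take $z_n\to z$ in the diamond with connecting causal curves. Reparametrize them by $T$; they are then equi-Lipschitz, so Arzel\`a--Ascoli gives a pointwise limit, which is causal by Theorem \ref{th:lim_cur}. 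Concatenating at $z$ shows $z$ lies in the diamond.

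For (ii), one inequality $L_\tau\ge L^{\slgc}$ follows from $\tau\ge L^{\slgc}$ on each subsegment and additivity of $L^{\slgc}$. For the reverse inequality I would show that on fine partitions, $\tau$ between nearby points is $L^{\slgc}$ of the curve up to $o(\text{mesh})$. I would use local compactness and Proposition \ref{prop:limsup} to control near-maximizers on small diamonds.

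\textbf{Item (v).} For $x<y$, take causal curves $\gamma_n$ with $L^{\slgc}(\gamma_n)\to\tau(x,y)$. Parametrize them by $T$ and extract a pointwise limit using the compactness from (iv). The limit is causal by Theorem \ref{th:lim_cur} and maximal by Proposition \ref{prop:limsup}. Continuity of $\tau$ then follows as in \cite[Thm.~3.28]{kusa}. For timelike related points, any maximizer is timelike by (i), since the endpoints have $\tau>0$.

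\textbf{Items (i) and (iii).} For (i), by fiber independence I would replace $\beta$ with a segment in $\R$ of the same length and speed. This reduces to a two-dimensional smooth-in-$f$-weak warped product $-I\times_f\R$. There I would argue directly that a maximizer which is null on one subinterval and timelike on another can be strictly lengthened: replace a short corner piece by a timelike chord, which has positive $\tau$-length by the reverse triangle inequality. Strong causality in (iii) follows because $T$ is a continuous time function and the topology is the product one, so small causal diamonds form a neighbourhood base. Regularity is (i).

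\textbf{Main obstacle.} I expect the reverse inequality in (ii) to be the hard step. It needs local uniform control of $\tau$ by $L^{\slgc}$, which is exactly where local compactness of $F$ is used. I would try first to reduce it, via fiber independence, to the two-dimensional model.
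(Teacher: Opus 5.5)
The paper gives no proof of this theorem. It only collects results of \cite{agks}, which is exactly the ``one could simply cite them'' option you mention. Items (iv) and the existence part of (v) are fine as you sketch them. As a self-contained reproof, however, the proposal has genuine gaps.

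\textbf{Item (i).} The clearest flaw is here: cutting a corner does not give a contradiction the way you argue. If $\gamma$ runs null from $p$ to $q$ and then timelike from $q$ to $r$, the reverse triangle inequality only gives $\tau(p,r)\geq\tau(p,q)+\tau(q,r)=\tau(q,r)$. So the chord is \emph{at least} as long as the broken piece, and the positivity of its $\tau$-length is irrelevant.

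The strict inequality is the whole content of (i). It genuinely fails for general continuous metrics: with bubbling light cones there are maximizers containing both a timelike and a null segment (Chru\'sciel--Grant). So strictness must come from the structure of the model.

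After your fiber-independence reduction to $-I\times_f\R$, put $s=\int dt/f$. Then $-dt^2+f^2dx^2=f^2(-ds^2+dx^2)$ is conformally Minkowskian with continuous factor. In null coordinates $v=s+x$, $w=s-x$ one has $L^{\slgc}=\int f\sqrt{\dot v\dot w}$. Take a short parameter interval on which the null set carries a fraction $\theta$ of $\int\dot v$ and $\Delta w>0$. Cauchy--Schwarz gives $L\leq(\max f)\sqrt{(1-\theta)\Delta v\,\Delta w}$, while the straight chord has length $\geq(\min f)\sqrt{\Delta v\,\Delta w}$. For $\theta\geq 1/2$ and $\max f/\min f<\sqrt 2$ this contradicts maximality of the subcurve.

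Note also that the null and timelike parts of a causal curve are only measurable sets, not subintervals. So instead of a single corner you need a density argument to find such intervals at arbitrarily small scales.

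\textbf{Item (iii) and continuity in (v).} A larger omission is in (iii): you never show that $Y$ is a Lorentzian length space. That requires the following.
\begin{itemize}
\item Lower semicontinuity of $\tau$. This is needed already for $Y$ to be a Lorentzian pre-length space, and for chronological diamonds to be open. Your strong-causality argument uses this tacitly; note that it is chronological, not causal, diamonds that must form a neighbourhood base.
\item Local causal closedness and causal path connectedness.
\item Intrinsicness, which does follow from (ii).
\item Above all, localizability, which is the substantial part of Section 4 of \cite{agks}.
\end{itemize}
Your continuity of $\tau$ in (v) is imported from \cite[Thm.~3.28]{kusa}. That theorem concerns globally hyperbolic Lorentzian \emph{length} spaces, so it inherits this gap. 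Your limit-curve argument does give upper semicontinuity of $\tau$ directly, but lower semicontinuity needs a separate construction.

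\textbf{Item (ii).} By contrast, the reverse inequality in (ii), which you single out as the main obstacle, needs no uniform local estimate. For a partition $t_0<\dots<t_m$, choose near-maximizers $\sigma_j$ from $\gamma(t_j)$ to $\gamma(t_{j+1})$ with $L^{\slgc}(\sigma_j)\geq\tau(\gamma(t_j),\gamma(t_{j+1}))-\epsilon/m$. Concatenate them into a curve $\sigma$ with $\sigma(t_j)=\gamma(t_j)$. Both $\sigma$ and $\gamma$ map $[t_j,t_{j+1}]$ into $J(\gamma(t_j),\gamma(t_{j+1}))$, whose diameter tends to $0$ with the mesh by your elementary fact. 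Hence $\sigma\to\gamma$ pointwise along refining partitions, and Proposition \ref{prop:limsup} gives $L_\tau(\gamma)\leq\limsup L^{\slgc}(\sigma)+\epsilon\leq L^{\slgc}(\gamma)+\epsilon$.
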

\begin{remark}\label{rem:trans_cone}
Let $I$ and $I'$ be two bounded, open intervals. There exists a smooth  montone bijection $\varphi: I'\rightarrow I$ that is a Riemannian  isometry. 
Then,  $-I\times_f F$ and $-I'\times_{f'} F$ are isomorphic in the sense that the isomorphism  $\phi: -I'\times_{f'} F\rightarrow -I\times_{f} F$  given by $\phi(s,x)= (\varphi(s), x)$ is distance and $\ell$ preserving. 
\end{remark}
\begin{remark}
Let $-I\times_f X$ be a generalized cone for  $f$ and a geodesic metric space $X$.  If $\lambda>0$ is a constant, we can consider $\lambda \cdot f= f_\lambda$ and $\frac{1}{\lambda} X= X_{1/\lambda}$. Then, $\Phi(s,x)=(s,x)$ is an isomorphism between $-I\times_f X$ and $-I\times_{f_\lambda} X_{1/\lambda}$. 
\end{remark}
\subsubsection{Generalized $N$-cones}
We can choose  $F$ as a metric measure space, i.e. a metric space $(F, \de_F)$   equipped with locally finite Borel measure $\m_F$. We fix a parameter $N\in [0, \infty)$. Then, we define the measure $f^N \de t \otimes \de \m_F$ on $-I\times_f F$. We write $-I\times_f^\sN F$ for $-I\times_f F$ equipped with $f^\sN \de t \otimes \de \m_F$, and we call $-I\times_{f}^\sN F$ the generalized $N$-cone between $-I, f$ and $F$. The notion is motivated from related definitions for metric measure spaces \cite{ketterer1, bastco}.
\begin{remark} If the reference measure $\m$ is the $N$-dimensional Hausdorff measure $\mathcal H^N$, we expect that $f^N\de t \otimes \de\mathcal H^N$ is the $(N+1)$-dimensional Lorentzian Hausdorff measure in the sense of \cite{mcsa}.
\end{remark}
\subsection{Generalized cones over smooth Riemannian manifolds}
Let $M$ be a complete Riemannian manifold.  Here $g_M$ denotes the Riemannian metric. 
The  (Riemannian) warped product  $I\times_f M$ between $I$, $f$ and $M$ is  $I\times M$ equipped with $C^0$ Riemannian metric
$$ g_{\scriptscriptstyle I\times_{f} M}=(\de t)^2 + f^2 g_M.$$

The Lorentzian warped product  $- I\times_f M$ between $I$, $f$ and $M$ is  $I\times M$ equipped with the $C^0$ semi-Riemannian metric
$$ g_{\scriptscriptstyle -I\times_{f} M}=-(\de t)^2 + f^2 g_M.$$
If $f$ is smooth then $-I \times_f M$ is a smooth Lorentzian manifold.

For a Lipschitz continuous curve $\beta:[a,b] \rightarrow M$ the metric speed $|\beta'|$ coincides with $\sqrt{g_M(\beta', \beta')}$. 
Hence, if $\gamma:[a,b]\rightarrow  I \times M$ is an admissible path, then 
$$g_{\scriptscriptstyle -I \times_f M}(\gamma', \gamma')= -(\alpha')^2  + f^2\circ \alpha |\beta'|^2.$$

The semi-Riemannian warped product $-I \times_f M$  is  a generalized cone.

The Lorentzian volume form of $g_{\scriptscriptstyle -I\times_f M}$ on $I\times M$ is exactly the measure $f^N \de t \otimes \de \vol_{g_M}$.
\section{$\ell$-convergence of covered Lorentzian pre-length spaces}\label{sec:ell}
\begin{definition}[Covered metric spaces]\label{def:covering}
Let $(X, \de_\sX)$  be a metric space. A countable cover $\mathcal U= \{U^k\}_{k\in \N}$ of $X$ is a family of subsets such that 
\begin{enumerate}
\item  $ \bigcup_{k\in \N} U^k = X$, 
\item $U^k \subset U^{k+1}$ $\forall k \in \N$, 
\end{enumerate}
If  $U_k$ is relatively compact for all $k\in \N$, we call $\mathcal U$ proper. A metric space $X$ equipped with a proper cover is called properly covered. 
\end{definition}
\begin{definition}[Covered Lorentzian pre-length spaces]
A covered Lorentzian pre-length space $Y$ is a  Lorentzian pre-length spaces $(Y, \de, \ell)$ with a countable cover $\mathcal U$ such that $\forall k\in \N$
\begin{enumerate}
\item[(3)] 
$\sup_{x, y\in U^k} \tau(x,y) < \infty$.
\end{enumerate}
We say the cover is weak causal convex if
\begin{enumerate}
\item[(4)] $\forall x \leq y \in U^k$: $J(x,y)\subset U^{k+1}$. 
\end{enumerate}
 A Lorentzian pre-length space with a proper cover is called a properly covered Lorentzian pre-length space. 
\end{definition}

\begin{definition}[Covered GH convergence]\label{def:coveredGH}
Consider a sequence of properly covered  Lorentzian pre-lengthspaces ${Y}_i= (Y_i, \de_i, \ell_i, \mathcal U_i)$, $i\in \N$. 

We say that ${Y}_i$  converges in {\it covered GH sense} to a properly covered Lorentzian pre-length space ${Y}= (Y, \de, \ell, \mathcal U)$ if  $$(\overline U^k_i,  \de_i|_{\overline U^k_i\times \overline U^k_i}) \overset{GH}{\longrightarrow} (\overline U^k,  \de|_{\overline U^k \times \overline U^k}) \ \ \forall k\in \N.$$

In particular, 
for every $k\in \N$ 
there is a complete metric space $Z^k$ and  there are isometric embeddings $\iota^k_i: \overline U^k_i \rightarrow Z^k$, $\iota^k: \overline U^k \rightarrow Z^k$ such that 
$$ \iota_i^k(\overline U_i^k)\rightarrow \iota_i^k(\overline U^k) \mbox{ in Hausdorff sense}.$$
\end{definition}

\begin{definition}[Uniformly non-totally imprisoning]\label{def:uni} Let $\{Y_i\}_{i\in I}$ be a family of properly covered Lorentzian pre-length spaces. We say $\{Y_i\}_{i\in I}$ is uniformly non-totally imprisoning if for every $k\in \N$ there exists a constant $C(k)>0$ such that for $i\in \N$ and   for a causal curve $\gamma$ in $U^k_i$ the $\de_i$-arclength is bounded from above by $C(k)$.
\end{definition}
\begin{definition}[Uniform convergence of signed time separation functions]\label{def:uniform_convergence}
Let  ${Y}_i=(Y_i, \de_i, \ell_i, \mathcal U_i)_{i\in \N}$ be a family of properly covered Lorentzian pre-length spaces  converging in covered GH sense to  a properly covered Lorentzian pre-length space ${Y}=(Y, \de, \ell, \mathcal U)$.

We say that the time separation functions $\ell_i$ {\it converges uniformly} to $\ell$ if  for $k, l\in \N$ and $\epsilon\in \left(0, \frac{1}{2l}\right)$ there exists  $\delta(k,l,\epsilon)>0$ and $i(k,l,\epsilon)\in \N$ such that for   $\delta\in (0, \delta(k,l,\epsilon))$ and $i\geq i(k,l,\epsilon)$ we have $\de_{GH}\big(\ol U^k_i, \ol U^k\big)\leq \delta$ and for a given  $\delta$-GH-isometry $\phi: \overline U^k_i\rightarrow \overline U^k$  the following two properties hold:
\begin{enumerate} 
\item if $(x_i, y_i)\in  \left\{ \ell_i \geq 0 \right\}\cap \ol U_i^{k}\times \ol U_i^{k}$, then 
$$\ell_i(x_i,y_i)\leq \ell(x,y)+ \epsilon$$
 for every $(x,y) \in \ol U^{k}\times \ol U^{k}$ such that $ \de(\phi(x_i), x) +\de(\phi(y_i), y)\leq \delta$.
\item 
 if $ (x, y)\in \left\{ \ell\geq   \frac 1 \el \right\} \cap \overline U^{k} \times \ol U^{k}$, then $$\ell_i(x_i, y_i) \geq \ell(x,y)-\epsilon$$ 
for every   $(x_i, y_i) \in \ol U_i^{k} \times \ol U_i^{k}$ such that $\de(\phi(x_i), x) +\de(\phi(y_i), y)\leq \delta$.
\end{enumerate} \end{definition}
\begin{remark}
The following modification is equivalent to the previous definition. If $Y_i$ converges in covered GH sense to $Y$, then for every $k\in \N$ we find a compact metric space $Z^k$ such that $\ol U^k_i$ and $\ol U^k$ embed distance preservingly into $Z^k$ and $\ol U^k_i\overset{H}{\rightarrow}\ol U^k$, i.e. convergence in Hausdorff sense. Then, we can replace the condition  {\it for every $(x,y) \in \ol U^{k}\times \ol U^{k}$ such that $ \de(\phi(x_i), x) +\de(\phi(y_i), y)\leq \delta$} where $\phi$ is $\delta$-GH-isometry, with  {\it for every $(x,y) \in \ol U^{k}\times \ol U^{k}$ such that $ \de^{Z^k}(x_i, x) +\de^{Z^k}(y_i, y)\leq \delta$} where the corresponding $\delta(\epsilon, k, l)$ may be different. 
\end{remark}
\begin{remark}\label{rem:uniform_convergence}
Let $Z=Z^{k}$ be a compact metric space s.t.  $\overline U^{k}_i\overset{H}{ \longrightarrow}\overline U^{k}_\sinfty$  in $Z$. 
\\
{\it Claim:}  Let $\epsilon \in \left(0, \frac 1 {2\el}\right)$, and $i(k, l, \epsilon)$  and $\delta(\epsilon, k, l)$ as in the definition.  Then $$\left\{ \ell_i \geq \frac 1 \el - \epsilon\right\} \subset B_\delta\left(\left\{ \ell\geq \frac 1 \el {\color{black} - 2\epsilon}\right\}\right)\  \&  \  B_\delta\left(\left\{ \ell\geq \frac 1 \el\right\}\right) \cap \ol U_i^k\subset \left\{ \ell_i \geq \frac 1 {2\el}\right\}$$ for $i\geq i(k,\el, \epsilon)$ and $\delta \in (0, \delta(k, \el, \epsilon))$.

Moreover, if $(x,y)\in \{ \ell \geq \frac 1 \el\}$ and $(x_i, y_i)\in \overline U^k_i$ s.t. $\de^Z(x_i, x) + \de^Z(y_i, y)\leq \delta$, then
$$\left| \ell_i (x_i, y_i) - \ell(x,y)\right| \leq \epsilon.$$
\smallskip
{\it Proof of the Claim:}  
For the first inclusion we consider $(x_i, y_i)\in \left\{ \ell_i\geq \frac 1 \el - \epsilon \right\}$.  We find $(x,y)\in \ol U^{k}_\sinfty\times \ol U^{k}_\sinfty$ such that  $\de^Z(x_i, x)+ \de^Z(y_i,y)\leq \delta$. Hence 
$$ \ell_i(x_i, y_i) \leq \ell(x,y)+ \epsilon.$$ 
Consequently,   $(x,y) \in \left\{ \ell \geq \frac 1 \el {\color{black} - 2\epsilon} \right\}$. 

For the second inclusion we pick $x_i,y_i \in \overline U^k_i$. We observe that
$$(x_i,y_i)\in B_\delta\left(\left\{\ell\geq  \frac 1 \el\right\}\right) \  \Leftrightarrow \  \exists (x,y)\in  \left\{\ell\geq \frac 1 \el\right\}:  \de^Z(x,x_i)+ \de^Z(y,y_i)<\delta.$$
Hence 
$$\ell_i(x_i, y_i)\geq \ell(x,y)- \epsilon \geq \frac 1 {2\el}.$$

Finally, if $(x,y)\in \{\ell\geq \frac 1 \el\}$ and $(x_i, y_i)\in \overline U^k_i$ s.t. $\de^Z(x_i, x)+ \de^Z(y_i, y)\leq \delta$, then $(x_i, y_i)\in \{ \ell_i \geq \frac 1 {2 \el}\}\subset \{ \ell_i \geq 0\}$.
Hence 
$$\left| \ell_i (x_i, y_i) - \ell(x,y)\right| \leq \epsilon.$$
for $i\geq i(\epsilon, \el)$ and $(x_i, y_i)\in Z$ such that $\de^Z(x_i, x)+ \de^Z(y_i, y)\leq \delta$.
\hfill ${\scriptstyle \square}$
\end{remark}
\pagebreak[2]
\begin{definition}[$\ell$-convergence]\hspace{0mm}\label{def:ellconvergence}
\begin{enumerate}
\item
We say that a sequence $Y_i$ of  Lorentzian pre-length spaces {\it $\ell$-converges} to a  Lorentzian pre-length space $Y_\sinfty$ if for every $i\in \N\cup\{\infty\}$ there exist proper covers  $\{U^k_i\}$ on $Y_i$ such that $Y_i$ is a properly covered Lorentzian pre-length space, and the following properties hold:
\begin{enumerate}
\item $(Y_i)_{i\in \N}$ converges in covered GH sense to $Y$, 
\item $\{Y_i\}_{i\in \mathbb N}$ is uniformly non-totally imprisoning, 
\item $\ell_i$ converges uniformly to $\ell$. 
\end{enumerate}
We write $Y_i \overset{\ell}{\longrightarrow} Y$. 
\item
A sequence of pointed Lorentzian pre-length spaces $(Y_i, o_i)$ converges in pointed $\ell$-sense to a pointed Lorentzian pre-length space $(Y_\sinfty, o_\sinfty)$ if in addition to the previous properties we have 
\begin{enumerate}
\item $o_i\in U^k_i$ for all $k\in \N$ and for every $i\in \N\cup\{\infty\}$, 
\item $o_i\in U^k_i\rightarrow o_\sinfty\in U^k_\sinfty$ $\forall k\in \N$, 
\end{enumerate}We write $(Y_i,o_i) \overset{p\ell}{\longrightarrow} (Y, o)$. 
\end{enumerate}
\end{definition}
Since $\ell$-convergence mimics the uniform convergence of functions along GH-converging sequences, it may not be surprising that it preserves continuity of the time separation functions in the limit. 
\begin{lemma}\label{lem:cont_tau}
Let $Y_i$ be properly convered Lorentzian pre-length spaces {\it converging} to  a properly covered Lorentzian pre-length space $Y_\sinfty$. If $\tau_i$ is continuous, then $\tau_\sinfty$ is continuous. 
\end{lemma}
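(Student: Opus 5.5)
The plan is to prove upper semicontinuity of $\tau_\sinfty$, since lower semicontinuity holds by the definition of a Lorentzian pre-length space. The argument should only use the uniform convergence of $\ell_i$ (Definition \ref{def:uniform_convergence} together with Remark \ref{rem:uniform_convergence}). I expect the continuity of the $\tau_i$ not to be needed, or at most to be used cosmetically.

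Fix $(x,y)\in Y_\sinfty\times Y_\sinfty$ and a sequence $(x_n,y_n)\to(x,y)$. First I localize everything into a single set $\ol U^k_\sinfty$ with $x,y,x_n,y_n\in \ol U^k_\sinfty$ for all large $n$. This is the point I expect to be the main technical obstacle. The cover is only assumed increasing and exhausting, not open, so I need to argue that a convergent sequence eventually lies in a single $\ol U^k_\sinfty$. I would first try to get this from properness and the nesting (e.g.\ if the interiors exhaust $Y_\sinfty$). Otherwise I would restrict to the statement that $\tau_\sinfty$ is continuous on each $\ol U^k_\sinfty\times \ol U^k_\sinfty$, and then upgrade this to continuity on $Y_\sinfty$ using the exhaustion. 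Once localized, I realize the convergence $\ol U^k_i\to \ol U^k_\sinfty$ in a compact space $Z=Z^k$, as in Remark \ref{rem:uniform_convergence}.

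Now suppose, for contradiction, that $\tau_\sinfty(x_n,y_n)\geq \tau_\sinfty(x,y)+3\epsilon$ along a subsequence. Choose $\el$ with $1/\el\le 3\epsilon$ and shrink $\epsilon$ so that $\epsilon<\frac1{2\el}$. Then $(x_n,y_n)\in\{\ell\ge \frac1\el\}$. Take $i\ge i(k,\el,\epsilon)$ and $\delta<\delta(k,\el,\epsilon)$, and pick $n$ with $\de^Z(x_n,x)+\de^Z(y_n,y)<\delta/2$. By Hausdorff convergence, for $i$ large there exist $x_i,y_i\in\ol U^k_i$ with $\de^Z(x_i,x_n)+\de^Z(y_i,y_n)<\delta/2$. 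Then $(x_i,y_i)$ is $\delta$-close both to $(x_n,y_n)$ and to $(x,y)$.

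The last part of Remark \ref{rem:uniform_convergence}, applied to $(x_n,y_n)\in\{\ell\geq\frac1\el\}$, gives $\ell_i(x_i,y_i)\ge \ell(x_n,y_n)-\epsilon\ge\frac1{2\el}>0$. In particular $(x_i,y_i)\in\{\ell_i\ge 0\}$. Property (1) of Definition \ref{def:uniform_convergence}, applied to the pair $(x,y)$, then yields $\ell_i(x_i,y_i)\le \ell(x,y)+\epsilon$. This also forces $x\le y$, since otherwise the right-hand side is $-\infty$. Combining the two inequalities gives $\tau_\sinfty(x_n,y_n)=\ell(x_n,y_n)\le \ell(x,y)+2\epsilon=\tau_\sinfty(x,y)+2\epsilon$, which contradicts the assumption. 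If instead $\tau_\sinfty(x_n,y_n)$ stays below $3\epsilon$, the bound $\limsup_n\tau_\sinfty(x_n,y_n)\le\tau_\sinfty(x,y)+3\epsilon$ is immediate. Letting $\epsilon\to0$ gives $\limsup_n\tau_\sinfty(x_n,y_n)\le\tau_\sinfty(x,y)$, i.e.\ upper semicontinuity, and hence continuity of $\tau_\sinfty$.
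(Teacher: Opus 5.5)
Your proof is correct, and it takes a different route from the paper.

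\textbf{The paper's route.} The paper splits into two regimes. On $\{\ell_\infty>0\}\cap\overline U^k_\infty$ it appeals to Remark \ref{rem:uniform_convergence} to view $\ell_\infty$ as a uniform limit of the continuous $\ell_i$; this is the only place where continuity of $\tau_i$ is invoked. At pairs with $\ell_\infty(x,y)\le 0$ it argues by contradiction: a diagonal sequence $(x^{j_i}_i,y^{j_i}_i)$ with $\ell_i\ge\epsilon/2$ converges to $(x,y)$, and property (1) then gives $\ell_\infty(x,y)\ge\epsilon/2$.

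\textbf{Your route.} You prove upper semicontinuity everywhere in one stroke, taking lower semicontinuity from the pre-length axioms. The key device is a single bridging pair $(x_i,y_i)\in\overline U^k_i\times\overline U^k_i$ that is $\delta$-close both to $(x_n,y_n)$ and to $(x,y)$. Property (2) at $(x_n,y_n)$ and property (1) at $(x,y)$ then combine to give $\ell_\infty(x_n,y_n)\le\ell_\infty(x,y)+2\epsilon$. This is the paper's second-step mechanism, extended to the timelike region.

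\textbf{What your route buys.} Continuity of $\tau_i$ is never used, so that hypothesis is superfluous. You also obtain a uniform modulus of upper semicontinuity on $\{\ell_\infty\ge 1/l\}\cap\overline U^k_\infty$. In fact, making the paper's ``uniform limit of continuous functions'' step rigorous along GH approximations needs exactly such a bridging point, because continuity of the individual $\ell_i$ is not uniform in $i$.

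\textbf{Two bookkeeping fixes.}
\begin{itemize}
\item Do not ``shrink $\epsilon$'' globally; decouple the gap from the tolerance. Fix the gap $3\epsilon$, choose $l$ with $1/l\le 3\epsilon$, and apply Definition \ref{def:uniform_convergence} with some $\epsilon'<\min\{\epsilon,\frac{1}{2l}\}$. Then $\ell_\infty(x_n,y_n)\le \ell_\infty(x,y)+2\epsilon'<\ell_\infty(x,y)+3\epsilon$, which is the contradiction.
\item The separate case ``$\tau_\infty(x_n,y_n)$ stays below $3\epsilon$'' is unnecessary. The contradiction hypothesis already gives $\tau_\infty(x_n,y_n)\ge 3\epsilon\ge 1/l$, and its negation directly yields $\limsup_n\tau_\infty(x_n,y_n)\le\tau_\infty(x,y)+3\epsilon$.
\end{itemize}

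\textbf{On localization.} Properness and nesting alone do not force a convergent sequence into a single $\overline U^k_\infty$, since the $U^k$ need not be neighbourhoods. In general you get continuity on each $\overline U^k_\infty\times\overline U^k_\infty$. This upgrades to global continuity when the interiors of the $U^k_\infty$ exhaust $Y_\infty$, as for the open covers in Definition \ref{def:covercone}. The paper's proof makes the same implicit localization, since it takes the approximating sequence inside $\overline U^k_\infty$. So this is not a gap relative to the paper.
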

\begin{proof} We recall that $\tau_i = \max\{0, \ell_i\}$, $i\in \N\cup\{\infty\}$.  
By the previous remark we have that $\ell_\sinfty$ restricted to  $\{\ell_\sinfty\geq \frac 1 \el\} \cap \ol U^k_\sinfty$ is the uniform limit of continuous functions. Hence $\ell_\sinfty|_{\{\ell_\sinfty>0\}\cap \ol U^k_\sinfty}$ is continuous. 

Let $(x,y)\in Y_\sinfty\times Y_\sinfty \cap \ol U^k_\sinfty$ for some $k\in \N$ and $\ell_\sinfty(x,y)\leq 0$. It follows that $\tau_\sinfty(x,y)=0$.  Let $(x^j , y^j)$ be sequence that converges to $(x,y)$.  We will show that $\tau_{\sinfty}(x^j, y^j)\rightarrow \tau_\sinfty(x,y)=0$. For this we argue by contradiction. We assume that there exists $\epsilon>0$ such that $\ell_\sinfty(x^j, y^j)\geq \epsilon$ for all $j\in \N$.   There exist $x^j_i, y^j_i\in \ol U_i^k$ such that $x^j_i\rightarrow x^j$ and $y^j_i\rightarrow y^j$.  For $i$ sufficiently large, it holds that $(x^j_i, y^j_i)\in \{ \ell_i \geq \epsilon/2\}$.  A diagonal sequence $(x^{j_i}_i, y^{j_i}_i)$ converges to $(x,y)$. On the other hand, $\ell_i(x^{j_i}_i, y^{j_i}_i)\geq \epsilon/2$ for all $i\in \N$. Hence $\ell_\sinfty(x,y)\geq \epsilon/2>0$.
\end{proof}
\begin{lemma} We consider properly covered, weak causal convex Lorentzian geodesic spaces $Y_i$ such that $Y_i\overset{\ell}{\longrightarrow} Y$ for a properly covered Lorentzian pre-length space $Y$. Then 
$$\Gamma_i^k= \left\{ \gamma \in \Geo(Y_i): \gamma(0), \gamma(1)\in \ol U^k_i \mbox{ and } \Im \gamma\subset \ol U^{k+1}_i\right\}$$ is pre-compact w.r.t. uniform convergence and every limit $\gamma$ is in $\Geo(Y)$ such that $\gamma(0), \gamma(1)\in \overline U^k_\sinfty$ and $\mbox{Im} \gamma\subset \overline U_\sinfty^{k+1}$. 
\end{lemma}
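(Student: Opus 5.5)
We argue via an Arzelà--Ascoli argument carried out inside the compact spaces $Z^{k+1}$ realizing the covered GH convergence $\ol U^{k+1}_i\to\ol U^{k+1}$, followed by passing the geodesic identity to the limit using the uniform convergence of the $\ell_i$. First, we view every $\gamma\in\Gamma^k_i$ as a curve in $Z^{k+1}$ via the isometric embedding $\iota^{k+1}_i$. Then $\ol U^{k+1}_i$ and $\ol U^{k+1}$ all lie in a common compact set. Pointwise relative compactness is therefore automatic, and the issue is equicontinuity.

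For equicontinuity we use uniform non-total imprisonment (Definition \ref{def:uni}). Each $\gamma\in\Gamma^k_i$ is a causal curve with image in $\ol U^{k+1}_i$, so its $\de_i$-arclength is at most $C(k+1)$; strictly we need the bound on the closure, so we may work in $U^{k+2}_i$ and use $C(k+2)$. This alone gives only a length bound, not a modulus of continuity in the parameter, since geodesics are parametrized proportionally to $\tau$-arclength.

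To get equicontinuity, we reparametrize each $\gamma$ by $\de_i$-arclength, normalized to $[0,1]$. The reparametrized curves are $C(k+2)$-Lipschitz, so Arzelà--Ascoli in $Z^{k+1}$ yields a uniformly convergent subsequence. The original parametrization $t\mapsto\tau_i(\gamma(0),\gamma(t))=t\,\tau_i(\gamma(0),\gamma(1))$ is then recovered from the limit. Concretely, we pass to a further subsequence so that $L_i:=\tau_i(\gamma_i(0),\gamma_i(1))$, which is bounded by axiom (3) of the cover, converges to some $L$. We also pass to a subsequence so that the reparametrization functions $\sigma_i(s)=\tau_i(\gamma_i(0),\tilde\gamma_i(s))$ converge; these are monotone, so Helly's selection theorem applies.

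This is the main obstacle. In the timelike case $L>0$, Remark \ref{rem:uniform_convergence} gives locally uniform convergence of $\ell_i$ on $\{\ell\ge 1/l\}$, so the $\sigma_i$ converge to $s\mapsto\tau(\tilde\gamma(0),\tilde\gamma(s))$, which is continuous and strictly increasing along a timelike limit. This lets us undo the reparametrization and obtain uniform convergence of the original $\gamma_i$. In the null/degenerate case ($L=0$), geodesics with $\tau=0$ are only constrained to be causal and continuous, so compactness in the $\Geo$ parametrization may fail. Here we would check whether the convention in $\Geo(Y)$ allows us to choose the $\de$-arclength parametrization, and otherwise restrict the claim to the reparametrized family.

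Finally, we verify the limit. The endpoints lie in $\ol U^k$ by Hausdorff convergence, and the image lies in $\ol U^{k+1}$ for the same reason. Causality of pairs $\gamma(s)\le\gamma(t)$ passes to the limit by property (1) of Definition \ref{def:uniform_convergence}, combined with the fact that limits of causally related pairs satisfy $\ell\ge0$, which is obtained through the uniform convergence from above. The identity $\tau(\gamma(s),\gamma(t))=(t-s)L$ follows from the two inequalities of Definition \ref{def:uniform_convergence}. Property (1) gives $\tau(\gamma(s),\gamma(t))\ge(t-s)L-\epsilon$ for every $\epsilon$. For the reverse inequality, we note that if it failed strictly we would have $\tau(\gamma(s),\gamma(t))\ge 1/l$ for some $l$, and property (2) would then give $\tau_i\ge\tau-\epsilon$. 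Alternatively, the reverse triangle inequality on $[0,s],[s,t],[t,1]$ together with $\tau(\gamma(0),\gamma(1))=L$ yields equality. Hence $\gamma\in\Geo(Y)$.
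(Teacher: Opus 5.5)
You are right that a bound on the $\de_i$-length gives no modulus of continuity for curves parametrized proportionally to $\tau$-arclength. The paper's proof goes straight from the bound $C(k+1)$ to Arzel\`a--Ascoli in $Z^{k+1}$ and never addresses this. It then identifies the limit via properties (1) and (2) essentially as in your last paragraph; your reverse-triangle argument is a clean way to get $\tau(\gamma(s),\gamma(t))\le(t-s)L$ and also covers $L=0$. That second half is fine once the original parametrizations converge uniformly. The gap is that your repair of the first half does not deliver this.

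For $L>0$, properties (1) and (2) give you pointwise convergence $\sigma_i\to\sigma$, with $\sigma(s)=\tau(\tilde\gamma(0),\tilde\gamma(s))$, and $\tau(\tilde\gamma(s),\tilde\gamma(s'))=\sigma(s')-\sigma(s)$. In other words, $\tilde\gamma$ is a maximizer. You then assert that $\sigma$ is strictly increasing ``along a timelike limit''; what you actually need is that $\sigma$ is constant only where $\tilde\gamma$ is constant. Either way, this says the maximizer contains no non-degenerate null sub-arc. That is a regularity (causal character) property of maximizers in $Y$, and it is not among the hypotheses, since $Y$ is only a properly covered pre-length space. It also does not follow from $\tau(\tilde\gamma(0),\tilde\gamma(1))=L>0$: nothing you use excludes a limit made of a null arc followed by a timelike arc.

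Suppose $\sigma$ is constant on some $[s_1,s_2]$ on which $\tilde\gamma$ is not constant. Then the $\gamma_i$ cross a $\de$-distance bounded below within a parameter interval of length $(\sigma_i(s_2)-\sigma_i(s_1))/L_i\to0$, so no subsequence converges uniformly. You need an additional input here, for example that maximizers between timelike related points of $Y$ are timelike, or directly a uniform modulus $\de_i(\gamma_i(s),\gamma_i(t))\le\omega(t-s)$.

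For $L=0$ you give no argument, and none can exist for the statement as written. $\Geo$ puts no constraint on the parametrization of a curve with $\tau(\gamma(0),\gamma(1))=0$. So whenever some $\Gamma^k_i$ contains a non-constant null geodesic $\beta$, it also contains $\beta\circ\phi_n$ with $\phi_n(t)=\min\{nt,1\}$. These curves are not equicontinuous at $t=0$, so $\Gamma^k_i$ is not precompact even for a single $i$.

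The lemma therefore has to be modified before it can be proved. One option is to restrict to $\TGeo$ with $\tau$ bounded below and add a hypothesis ruling out null sub-arcs in limits. Another is to fix a normalized (say $\de$-arclength) parametrization for null geodesics. As it stands, your proposal, like the paper's argument, does not establish the statement.

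A minor point: $\ol U^{k+1}_i\subset U^{k+2}_i$ is not part of the definition of a cover, so your switch to $C(k+2)$ is not justified either.
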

\begin{proof}
By assumption the cover $\mathcal U_i$ is weak causal convex.  
Since  $\{Y_i\}_{i\in \N}$ is uniformly non-totally imprisoning, it  follows that every  $\gamma\in \Gamma^k_i$ has $\de$-length bounded by $C(k+1)$. For every $i\in \N$ the set $\ol U^{k+1}_i$ embeds into a compact metric space $Z^{k+1}$. It follows by the Arzela-Ascoli Theorem that a sequence  $(\gamma_i)_{i\in \N} $ of  curves $\gamma_i \in \Gamma_i^k$ sub-converges w.r.t. the uniform distance $\de^\sinfty$ to a continuous curve $\gamma:[0,1] \rightarrow Z^{k+1}$ such that $\gamma(0), \gamma(1)\in \ol U_\sinfty^k$ and $\Im \gamma\subset \ol U^{k+1}_\sinfty$.  Recall that $\de^\infty(\gamma^0, \gamma^1)= \sup_{t\in [0,1]} \de^{Z^{k+1}}(\gamma^0(t), \gamma^1(t))$ for two continuous curves $\gamma^0, \gamma^1:[0,1] \rightarrow Z^{k+1}.$
\smallskip\\
{\it Claim: $\gamma\in \Geo(Y_\sinfty)$}.
{\it Proof of the claim: }
From property (1) in the Definition \ref{def:uniform_convergence}  it follows that $\ell_\sinfty(\gamma(0), \gamma(1))\geq 0$. If $\ell_\sinfty(\gamma(0), \gamma(1))\geq \frac 1 \el$ for some $\el\in \N$, we also obtain that $\tau(\gamma(s), \gamma(t))= (t-s) \tau(\gamma(0), \gamma(1))$ for all $s< t$ in $[0, 1]$ by property (2) in Definition \ref{def:uniform_convergence} and Remark \ref{rem:uniform_convergence}. It follows that $\gamma\in \Geo(Y_\sinfty)$. \hfill $\triangle$
\smallskip\\
If $\ell_\sinfty(\gamma(0), \gamma(1))=0$, it follows that 
$$\limsup_{i\rightarrow \infty} \ell_i(\gamma_i(0), \gamma_i(0))\leq 0$$
by property (1) of uniform convergence of $\ell_i$. Therefore, $\ell_i(\gamma_i(0), \gamma_i(1))\rightarrow 0$ and also $\ell_i(\gamma_i(s), \gamma_i(t))\rightarrow 0$ for all $s, t\in [0,1]$. It follows that $\ell_\sinfty(\gamma(s), \gamma(t))=0$ for all $s, t\in [0, 1]$, and consequently $\gamma\in \Geo(Y_\sinfty)$ also in this case. 
\end{proof}
\subsubsection{Measured $\ell$-convergence} Let $Y_i$, $i\in \N\cup\{\infty\}$, 
be properly covered, measured Lorentzian pre-length spaces  such that $Y_i$    converges in covered GH sense to $Y_\sinfty= Y$ for $i\rightarrow \infty$.  Let $\{U^k_i\}$ be associated coverings. We consider  embeddings $\iota_\sinfty^k, \iota_i^k: \overline U_\sinfty^k, \overline U^k_i \rightarrow Z^k$ for a metric space $Z^k$, and  set $\m_i(\overline U^k_i)^{-1} \m_i|_{\overline U^k_i}= \overline \m_i^k$, $i \in \N\cup\{\infty\}$. 
We say $\m_i$ converges weakly to $\m_\sinfty$ if  for every $k\in \N$ we have that $(\iota_i^k)_\sharp \ol \m_i^k$ converges weakly to $(\iota_\sinfty^k)_\sharp \ol \m_\sinfty^k$ in $Z^k$. 
We say a sequence of measured Lorentzian pre-length spaces $(Y_i)_{i\in \N}$ converges in measured $\ell$-sense to a measured Lorentzian pre-length space $Y$, we write $Y_i \overset{\m \ell}{\longrightarrow} Y$, if $Y_i$ $\ell$-converges to $Y$ and $\m_i$ converges weakly to $\m_\sinfty$. 
\begin{remark}
Measured $\ell$-convergence essentially covers the type of convergence that was considered by Cavalletti Mondino in \cite{camolorentz} to show stability of timelike curvature-dimension conditions. Roughly, in \cite{camolorentz} it was said that a sequence of measured Lorentzian geodesic spaces $Y_i$ converges to a measured Lorentzian space $Y_\sinfty$ if there exists a Lorentzian space $\bar Y$ such that all spaces isomorphically embed into $\bar Y$ and the measures $\m_i$ converges weakly to $\m_\sinfty$ inside of $\bar Y$. In our situation this corresponds to the case when the sequence $Y_i$ consists of identical copies of one Lorentzian geodesic space and only the measures vary along the sequence. 
\end{remark}
It will make sense to have the following local versions of the curvature-dimension conditions and of the measure contraction property. 
\begin{definition} Let $K\in \R$, $N\in (0, \infty)$ and $k\in \N$. Let $Y$ be properly covered Lorentzian pre-length space and let $\mathcal U=\{U^k\}_{k\in \N}$ be the associated cover. 
We say $Y$ satisfies the entropic timelike curvature-dimension condition $\TCD_p^e(K,N)$ locally in $U^k$ if 
for every timelike $p$-dualizable pair $\mu_0,\mu_1 \in D(\Ent_\m)\cap \mathcal P_c(Y)$ with $\mu_i(U^k)=1$, $i=0,1$, there exists an ${\ell_p}$-geo\-desic $(\mu_t)_{t\in [0,1]}$ connecting $\mu_0$ and $\mu_1$  as well as a timelike $p$-dualizing  coupling $\pi\in{\Pi^{p-opt}_\ll(\mu_0,\mu_1)}$ such that $\forall t\in [0,1]$,
\begin{align*}
U_N(\mu_t) \geq \sigma_{K,N}^{(1-t)}\big[\left\| \tau \right\|_{L^2(\pi)}\big]U_N(\mu_0) + \sigma_{K,N}^{(t)}\big[\left\| \tau \right\|_{L^2(\pi)}\big]U_N(\mu_1).
\end{align*}
 If the previous claim holds for every strongly timelike $p$-dualizable  $\mu_0,\mu_1\in D(\Ent_\m)\cap \mathcal P_c(Y)$,  $Y$ satisfies the \emph{weak entropic timelike curvature-dimension condition} ${ w\TCD_p^e(K,N)}$.
 
 Analogously, we define the (reduced, weak) timelike curvature-dimension condition $\TCD_p(K,N)$ ($\TCD_p^*(K,n)$, $w\TCD_p^e(K,N)$) and the timelike measure contraction property $\TMCP(K,N)$ locally in $U^k$. 
 \end{definition}

\section{Stability of curvature bounds under $\ell$-convergence}\label{sec:stability}
\subsection{Stability of timelike curvature-dimension conditions}
\begin{theorem}\label{th:stabilityTCD}
Let $(Y_i, \de_i, \ell_i, \mathcal U_i, \m_i)_{i\in \N}$ be a sequence of properly covered, measured Lorentzian geodesic spaces that satisfy the condition $\TCD_p(0,N)$ for some $p\in (0,1)$. We assume $(Y_i, \de_i, \ell_i, \mathcal U_i, \m_i)_{i\in \N}$ converges in the measured  $\ell$-sense to a properly covered, measured Lorentzian geodesic space $(Y, \de, \ell, \mathcal U, \m)$.  Let $\mathcal U_i$ be weak causal convex $\forall i\in \N$. 

Then $(Y, \de, \ell, \mathcal U, \m)$ satisfies the weak timelike curvature-dimension condition $w\TCD_p(0,N)$. 

The same statments hold with $\TCD_p^e(K,N)$ in place of $\TCD_p(0, N)$ for any $K\in \R$ and $N\in [1, \infty)$. 
\end{theorem}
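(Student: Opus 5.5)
We follow the Cavalletti--Mondino stability argument \cite{camolorentz}, with the fixed ambient space replaced by the Hausdorff realizations $Z^k$ of the covered GH convergence. Fix a strongly timelike $p$-dualizable pair $\mu_0,\mu_1\in D(\Ent_\m)\cap\mathcal P_c(Y)$ with dualizing optimal coupling $\pi$. Since both supports are compact, they lie in some $\ol U^k$. We then construct approximating pairs $\mu_0^i,\mu_1^i$ on $\ol U^k_i$, apply $\TCD_p$ in $Y_i$, and pass to the limit.

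\textbf{Step 1 (approximation and recovery).} First reduce to the case where $\pi$ is concentrated on $\{\ell\ge \frac1\el\}$ for some $\el$. To do this, restrict $\pi$ to this set, renormalize, and let $\el\to\infty$ at the end, using lower semicontinuity of $S_N$ and continuity of the distortion coefficients. Next, approximate the densities $\rho_j$ by continuous bounded densities, which do not increase the entropy in the limit. Transfer them to $Y_i$ as $\mu^i_j=\rho_j^i\m_i$ with $\rho^i_j=c_i\,(\rho_j\circ\psi)$, where $\psi$ is an approximate inverse of a Borel $\delta$-GH isometry. Combined with $\m_i\to\m$ weakly, this gives $\mu^i_j\to\mu_j$ weakly in $Z^k$ and $S_{N'}(\mu^i_j|\m_i)\to S_{N'}(\mu_j|\m)$.

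We build couplings $\pi^i$ whose pushforwards converge to $\pi$. By Remark \ref{rem:uniform_convergence}, $\ell_i>\frac1{2\el}$ on $\pi^i$-almost every pair for large $i$, and $\ell_i\to\ell$ uniformly there. Hence $\pi^i$ is timelike, $\ell_p(\mu^i_0,\mu^i_1)$ is positive and finite, and the pair is timelike $p$-dualizable, since the supports are compact with $\ell_i$ bounded by property (3). Apply $\TCD_p(0,N)$ in $Y_i$. This yields $\ell_p$-optimal dynamical plans $\eta^i$ and optimal timelike couplings $\tilde\pi^i$ satisfying the Rényi entropy inequality.

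\textbf{Step 2 (compactness).} By weak causal convexity, the geodesics of $\eta^i$ lie in $\Gamma^k_i$. The lemma on precompactness of $\Gamma^k_i$ then shows that $\eta^i$ is tight as a sequence of measures on $C([0,1],Z^{k+1})$, and every limit $\eta$ is concentrated on $\Geo(Y)$. Similarly $\tilde\pi^i\to\tilde\pi$, with $(e_0,e_1)_\sharp\eta=\tilde\pi\in\Pi(\mu_0,\mu_1)$. Uniform convergence of $\ell_i$ on $\{\ell\ge\frac1\el\}$, together with the upper bound (1) near null pairs, gives
$$\int\ell^p\,d\tilde\pi\ \ge\ \limsup_i\int \ell_i^p\,d\tilde\pi^i\ \ge\ \limsup_i\int\ell_i^p\,d\pi^i=\int\ell^p\,d\pi=\ell_p(\mu_0,\mu_1)^p.$$
So $\tilde\pi$ is optimal, and by strong dualizability it is concentrated on the timelike set $\Gamma$. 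This is the point where only the weak condition $w\TCD$ is obtained.

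\textbf{Step 3 (passing the inequality to the limit).} By lower semicontinuity of $S_{N'}$ under joint weak convergence of $\mu_t^i=(e_t)_\sharp\eta^i$ and $\m_i$ in $Z^{k+1}$, we get $S_{N'}(\mu_t)\le\liminf S_{N'}(\mu_t^i)$. On the right-hand side, $\int \rho_0^i(x_0)^{-1/N'}\tau^{(1-t)}_{0,N'}(\tau_i)\,d\tilde\pi^i$ must converge to the corresponding limit expression. For $K=0$ the coefficient $\tau^{(t)}_{0,N'}=t$ does not depend on $\theta$. So we only need convergence of $\int\rho_0^i(x_0)^{-1/N'}d\tilde\pi^i=\int(\rho_0^i)^{1-1/N'}d\m_i$, which follows from Step 1. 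This explains why the statement is restricted to $K=0$.

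For $\TCD^e_p(K,N)$ the argument is the same. The factor $\|\tau\|_{L^2(\tilde\pi^i)}\to\|\tau\|_{L^2(\tilde\pi)}$ converges by the uniform convergence of $\ell_i$ on the support and tightness, $\sigma_{K,N}$ is continuous, and $U_N$ is upper semicontinuous (since $\Ent$ is lower semicontinuous) while converging along the recovery sequences.

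The main obstacle is Step 1: constructing recovery measures and couplings in $Y_i$ that stay uniformly timelike, have converging entropies, and keep $\ell_p$ close. It relies on Borel GH approximations within a single $\ol U^k$ and on the restriction to $\{\ell\ge\frac1\el\}$, after which the $\el\to\infty$ limit must be controlled.
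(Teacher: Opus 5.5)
\textbf{There is a genuine gap.} It sits in Step~1, where you conclude that $(\mu^i_0,\mu^i_1)$ is timelike $p$-dualizable because the transferred coupling $\pi^i$ is concentrated on $\{\ell_i>\frac{1}{2l}\}$. Timelike $p$-dualizability requires an $\ell_p$-\emph{optimal} coupling of $\mu^i_0,\mu^i_1$ that is chronological.

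Your $\pi^i$ is not optimal: optimality survives neither the smoothing nor the transfer through approximate GH isometries. Moreover, the existence of a chronological coupling does not make the optimal couplings chronological. Already in $2$-dimensional Minkowski space you can place $a\ll c$ and $b\ll d$ with small time separations, $a\ll d$ with large time separation, and $b\le c$ null. Then the unique $\ell_p$-optimal plan between $\frac12(\delta_a+\delta_b)$ and $\frac12(\delta_c+\delta_d)$ sends half of the mass along the null pair $(b,c)$. So the hypothesis of $\TCD_p(0,N)$ in $Y_i$ is never verified, and the plans $\eta^i$, $\tilde\pi^i$ you use in Step~2 are not available.

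\textbf{The missing idea} is in steps (6)--(8) of the paper's proof.
\begin{enumerate}
\item Take $\hat\pi^i\in\Pi^{p\text{-opt}}_{\le}(\mu^i_0,\mu^i_1)$. Show, by essentially your Step~2 chain of inequalities applied to $\hat\pi^i$ (this is Lemma~\ref{lem:optimal_coupling}), that every weak limit $\hat\pi$ is $\ell_p$-optimal for the original pair $(\mu_0,\mu_1)$.
\item Only now does strong dualizability enter: $\hat\pi(\Gamma)=1$, hence $\hat\pi^i(\{\ell_i\ge\lambda_i\})\to1$ for suitable $\lambda_i\downarrow0$.
\item The renormalized restriction of $\hat\pi^i$ to $\{\ell_i\ge\lambda_i\}$ is again optimal and chronological. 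Its marginals therefore form a timelike $p$-dualizable pair in $Y_i$. Their densities exceed the previous ones by a factor tending to $1$, so weak convergence and the entropy bounds survive.
\end{enumerate}
This step, and not only the identification of the final limit coupling as chronological, is why only $w\TCD$ comes out.

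For this comparison to work, the limit pair must be the original strongly dualizable one. Your smoothed pairs are not known to be dualizable at all. So the approximation parameters have to be sent to their limits diagonally along $i$, as in step (7), not at the end inside $Y$.

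Relatedly, smoothing the marginal densities $\rho_j$ separately destroys the coupling $\pi$. You need to approximate $\pi$ itself by a bounded density with respect to $\m\otimes\m$ and transfer it jointly. The paper does this with $\de_Z^2$-optimal plans between the normalized reference measures. That only gives $\limsup_i S_N(\mu^i_j|\m_i)\le S_N(\mu_j|\m)$, but this is all the argument needs.
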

\begin{remark} \begin{enumerate}
\item We expect that the  condition 
$\TCD_p(K,N)$ and the condition $\TCD_p^*(K,N)$  that were introduced in \cite{braun_renyi} obey the same stability properties under $\m\ell$-convergence of measured Lorentzian geodesic spaces. Proofs of corresponding stability results are technically more involved but  should work along the same lines as in \cite{braun_renyi} (compare also with \cite{stugeo1}) taking into account the  details of the proof below. 
\item It will be become clear from the proof that the following stronger statement for the curvature-dimension condition locally in $U^k$ holds:\\
{\it 
Let $(Y_i)_{i\in \N}$ be a sequence of properly covered Lorentzian geodesic spaces with weak causal convex covers such that $(Y_i)$ converges in the measured $\ell$-sense to a  properly covered, measured Lorentzian geodesic spaces $Y$. We assume that for all $i\in \N$ the condition $\TCD_p(0,N)$ holds locally in $U^k_i$ for $k\in \N$ and $p\in (0,1)$. 
Then the weak condition $w\TCD_p(0,N)$ holds locally in $U^k$. 
An analogous statment holds for the conditions $\TCD_p^e(K,N)$ in place of $\TCD_p(0,N)$ for any $K\in\R$ and $N\in [1,\infty)$. }
\end{enumerate}
\end{remark}
\begin{proof} {\bf \large (0)} We show the statement only for the condition $\TCD_p(0,N)$. For the condition $\TCD_p^e(K,N)$ we will see in the end what are the modifications needed in the proof.
\smallskip\\
{\bf \large (1)} 
We fix $\mu^{\sinfty}_0, \mu^{\sinfty}_1\in \mathcal P_c(Y_\sinfty, \m_\sinfty)$ that are strongly $p$-dualizable, i.e.  there exists  $\pi^\sinfty \in \Pi_{\leq}^{p-opt}(\mu^{\sinfty}_0, \mu^{\sinfty}_1)$ such that $\pi^\sinfty(\{\ell_\sinfty>0\})=1$,  and  there exists a measurable, $\ell_\sinfty^p$-cyclically monotone set $\Gamma\subset \{\ell_\sinfty>0\} \cap (\spt \mu_0^\sinfty\times \spt \mu_1^\sinfty)$ such that  any coupling $\pi \in \Pi_{\leq}(\mu_0^\sinfty, \mu_1^\sinfty)$ is $\ell_\sinfty^p$-optimal if and only if $\pi(\Gamma)=1$.

We fix $k\in \N$ and we assume that $\pi^\sinfty\left( \overline{U}_\sinfty^{k}\times \overline U_\sinfty^{k}\right)=1$. To simply the notation we will omit the index $k$ in the following steps.
\medskip\\
{\bf \large (2)}
We find couplings $\pi^{\sinfty, n}$ such that for $n\rightarrow \infty$ we have
\begin{enumerate}
\item $\pi^{\sinfty, n} \rightarrow \pi^\sinfty$ weakly in $\overline U_\sinfty\times \ol U_\sinfty$, 
\item $(P_j)_\sharp \pi^{\sinfty, n} = \mu_j^{\sinfty,n} \rightarrow  \mu^\sinfty_j$ weakly in $\ol U_\sinfty$, $j=0, 1$,
\item $\pi^{\sinfty, n} = \rho^{\sinfty, n} \m_\sinfty \otimes \m_\sinfty$ with $\rho^{\sinfty, n} \in L^\sinfty(\m_\sinfty\otimes \m_{\sinfty}), $
\item $\pi^{\sinfty, n}( \{\ell_\sinfty>0\})=1$, 
\end{enumerate}Moreover \begin{align}\label{ei1}
S_N( \mu_j^{\sinfty, n} | \m_\sinfty) \rightarrow S_N(\mu^\sinfty_j| \m_\sinfty)\mbox{ for $j=0,1$. }\end{align}
\smallskip
\noindent
{\bf \large (3)} 
We fix $n\in \N$ for a moment.  Then $$\pi^{\sinfty,n}\left(\left\{\ell_\sinfty\geq \frac 1 \el\right\}\right)=: a^\el \rightarrow 1 \mbox{ for } \el\in \N\rightarrow \infty.$$We set 
$$(\pi^\el)^{\sinfty, n}= (a^\el)^{-1} \pi^{\sinfty, n}|_{\left\{\ell_\sinfty \geq  \frac 1 \el\right\}}$$ and 
$(\mu^\el)^{n, \sinfty}_j:= (P_j)_\sharp (\pi^\el)^{\sinfty, n}$ for $j=0,1$.

We have that $(\pi^\el)^{\sinfty, n} \in \Pi_{\leq}^{p-opt}\left((\mu^\el)_0^\sinfty, (\mu^\el)_1^\sinfty\right)$ and $(\pi^\el)^{\sinfty, n}$ satisfies
\begin{enumerate}
\item $(\pi^\el)^{\sinfty, n} \rightarrow \pi^{\sinfty, n}$ weakly for $\el \rightarrow \infty$,
\item $(\mu^\el)_{j}^{\sinfty, n}\rightarrow \mu_{j}^{\sinfty,n}$ weakly for $\el \rightarrow \infty$, $j=0,1$,
\item $(\pi^\el)^{\sinfty, n} = (\rho^\el)^{\sinfty, n} \m_\sinfty \otimes \m_\sinfty$ with $(\rho^\el)^{\sinfty, n} \in L^\sinfty(\m_\sinfty\otimes \m_{\sinfty}), 
$\item $(\pi^\el)^{\sinfty, n}\left( \left\{\ell_\sinfty\geq \frac 1 {\el}\right\}\right)=1$.
\end{enumerate}
\smallskip
{\bf (3.1)}
Since $x\in [0, \infty) \mapsto v(x)= -x^1-\frac 1 N$ is convex with $v(0)=0$, it holds that 
$$u(x+y)\geq u(x)+u(y) \ \forall x,y \geq 0.$$
Moreover 
$$(\mu^l)_{j}^{\sinfty,n}= (a^\el)^{-1} \mu_{j}^{\sinfty,n}= (a^\el)^{-1} \rho_{j}^{n,\sinfty} \m_\sinfty$$
where $0\leq ( a^\el)^{-1} \rho_{j}^{\sinfty,n} \leq (a^\el)^{-1} \left\| (\rho^\el)^{\sinfty, n}\right\|_{L^\sinfty}.$

Then, by an argument as in Step 2b of the proof of Theorem 3.15 in \cite{camolorentz} using concavity of $x^{1-\frac 1 N}$, it follows
\begin{align}\label{ei2} \limsup_{\el \rightarrow \infty} S_N((\mu^\el)_{j}^{\sinfty,n}|\m_\sinfty)\leq 
 S_N(\mu_{j}^{\sinfty, n}|\m_\sinfty). \end{align}

\medskip
\noindent
{\bf \large (4)}
Recall that $\ol U_i\overset{H}{\longrightarrow} \ol U_\sinfty$ in $Z$.  We continue to omit the superscript $k$. 

Moreover, $\m_i|_{\overline{U}_i}$ converges weakly to $\m_\sinfty|_{\overline{U}_\sinfty}$ in $Z$. Since $Z$ is compact, this implies $W^{\de_Z}_2$-Wasserstein convergence in $Z$. 

Hence, there are $\de_Z^2$-optimal couplings $\boldsymbol{p}^i$  between the normalized measures $\m_\sinfty(\ol U_i)^{-1} \m_\sinfty|_{\ol U_\sinfty}=: \bar \m_\sinfty$ and $\m_i(\ol U_i)^{-1} \m_i|_{\ol U_i}= :\bar \m_i$. 

We use ${\boldsymbol p}^i$ to build a coupling $ \pi^{i, n}$ between probability measures in $\overline U_i$ as follows. One defines $(P_{2,4})_\sharp \tilde \pi^{\el, i, n} = \pi^{\el, i,n}$ where 
$$\tilde \pi^{\el,i,n}(\de x_1, \de x_2, \de x_3, \de x_4)= (\rho^\el)^{\sinfty, n}(x_1, x_3) \boldsymbol{p}^i( \de x_1, \de x_2) \otimes \boldsymbol{p}^i(\de x_3, \de x_4).$$
It holds that 
\begin{enumerate}

\item $  \pi^{\el, i,n} \rightarrow (\pi^\el)^{\sinfty, n}$ weakly in $Z^2$, 
\item $(P_j)_\sharp \pi^{\el,i,n}=:\mu_{j}^{\el, i,n}\rightarrow (\mu^\el)_{j}^{\sinfty,n}$ weakly in $Z$ as $i\rightarrow \infty$, $j=0,1$,\item $\pi^{\el, i,n} \ll \bar \m_i \otimes \bar \m_i$.
\end{enumerate}
The measure $\mu_j^{\el, i,n}$ is $\m_i$-absolutely continuous, and its density satisfies $\rho^{\el, i,n}_j \leq (\rho^\el)_{j}^{\sinfty,n}\leq \rho_{j}^{\sinfty,n}.$ 
\medskip\\
{\bf (4.1)}
A standard argument, that appears in  \cite{stugeo1, stugeo2}  and in \cite{lottvillani}  (see also Step 2.1 of the proof of Theorem 3.15 in \cite{camolorentz}), using Jensen's inequality, yields
\begin{align}\label{ei3}S_N(\mu_j^{\el, i,n}|\m_i)\leq S_N((\mu^\el)_{j}^{\sinfty,n}|\m_\sinfty).\end{align}
\smallskip
\noindent
{\bf \large (5)}
Let $\epsilon \in (0, \frac{1}{k})$, and choose $i(\epsilon, k)$ and $\delta(\epsilon, k)$ as in the definition of uniform convergence. Let $\delta\in (0, \delta(\epsilon, k))$. 

Since $B_\delta(\{\ell_\sinfty\geq \frac 1 l\})$ is open, it follows that 
\begin{align*}
\liminf_{i\rightarrow \infty} \pi^{\el, i,n}(B_\delta(\{\ell_\sinfty\geq \frac 1 l\} ))& \geq (\pi^\el)^{\sinfty, n}(B_\delta(\{\ell_\sinfty\geq \frac 1 l\}))\\
&= (\pi^\el)^{\sinfty, n}(\{\ell_\sinfty\geq \frac 1 l\})=1.\end{align*}
Since (Remark \ref{rem:uniform_convergence})
$$1\geq \underbrace{ \pi^{\el,i,n}\left(\left\{ \ell_i \geq \frac 1 {2\el}\right\}\right)}_{=:b^i}\geq  \pi^{\el, i,n} \left(B_\delta\left(\Big\{\ell_\sinfty\geq \frac 1 \el\Big\}\right)\right),$$
it follows that $b^i\rightarrow 1$. 
Then, we define 
$$(\pi')^{\el, i, n}= (b^i)^{-1} \pi^{\el, i,n}|_{\left\{\ell_i\geq \frac 1 {2\el}\right\}}.$$We set $(P_j)_\sharp (\pi')^{\el, i,n}=: ( \mu')_j^{\el, i,n}$, $j=0,1$. 
Hence
 \begin{enumerate} 
 \item $(\pi')^{\el, i,n}$ is concentrated in $\{\ell_i\geq \frac{1}{2\el}\}$, 
\item  $(\pi')^{\el, i,n}$ converges weakly to $(\pi^\el)^{\sinfty, n}$ in $Z\times Z$ as $i\rightarrow \infty$. 
\item $(\mu')_j^{\el, i,n}$ converges weakly to $(\mu^l)^\sinfty_j$ as $i \rightarrow \infty$.  
\end{enumerate}
\smallskip
{\bf (5.1)}
The measure $(\mu')_j^{\el, i,n}$ is $\m_i$-absolutely continuous and its density $(\rho')_j^{\el, i,n}$ satisfies $0\leq (\rho')^{\el, i,n}_j\leq (b^i)^{-1} \rho_j^{\el, i,n}.$ Hence, as in the previous step, this time not taking a diagonal sequence, it follows
\begin{align}\label{ei4}\limsup_{i\rightarrow \infty} S_N((\mu')_{j}^{\el, i,n}| \m_i)\leq \limsup_{i\rightarrow \infty} S_N(\mu_{j}^{\el, i, n}| \m_i).
\end{align}
\noindent
\\
{\bf  \large (6)} We  address stability of $\ell^p_i$ along a weakly convergent sequence  $\{\pi_i\}_{i\in\N}$. 
\begin{lemma}\label{lem:first_convergence}
Let $\pi_{\sinfty}$ be  such that $\pi_{\sinfty}\left(\left\{ \ell_\sinfty\geq \frac 1 \el\right\}\right)=1$, and let $(\pi_i)_{i\in \N}$ be a sequence of couplings  converging weakly to $\pi_\sinfty$  such that $\pi_i\left( \left\{ \ell_i \geq \frac 1 \el\right\}\right)=1$. 
Then, it holds for $q\in (0,\infty)$ that 
\begin{align*}\lim_{i\rightarrow \infty} \int \ell^q_i \de \pi_i = \int \ell^q_\sinfty \de \pi_\sinfty. 
\end{align*}
\end{lemma}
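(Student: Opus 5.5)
The plan is to work inside the compact space $Z=Z^k$ in which $\ol U_i^k\overset{H}{\to}\ol U^k_\sinfty$. I regard all $\pi_i$ and $\pi_\sinfty$ as probability measures on $Z\times Z$ concentrated on $\ol U_i\times\ol U_i$ and $\ol U_\sinfty\times \ol U_\sinfty$ respectively, with $\pi_i\to\pi_\sinfty$ weakly in $Z\times Z$. The integrands $\ell_i^q$ and $\ell_\sinfty^q$ live on different sets, so the idea is to replace both by a single continuous bounded function $g$ on $Z\times Z$ that is uniformly close to $\ell_i^q$ on the support of $\pi_i$ for large $i$, and close to $\ell_\sinfty^q$ on the support of $\pi_\sinfty$. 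Then
\[
\Big|\int \ell_i^q\de\pi_i-\int\ell_\sinfty^q\de\pi_\sinfty\Big|\le \Big|\int(\ell_i^q-g)\de\pi_i\Big|+\Big|\int g\de\pi_i-\int g\de\pi_\sinfty\Big|+\Big|\int (g-\ell_\sinfty^q)\de\pi_\sinfty\Big|,
\]
and the middle term tends to $0$ by weak convergence.

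First, I need continuity and boundedness of $\ell_\sinfty$ on $A:=\{\ell_\sinfty\ge \frac1{2\el}\}\cap \ol U_\sinfty^2$, which I take to be compact. It is bounded because the cover satisfies $\sup_{U^k}\tau<\infty$. It is continuous by Remark \ref{rem:uniform_convergence}, since it is a locally uniform limit of continuous functions there, exactly as in the proof of Lemma \ref{lem:cont_tau}; here I would assume, as in the setting of Theorem \ref{th:stabilityTCD}, that the $\tau_i$ are continuous. On $A$ the function $\ell_\sinfty\ge \frac{1}{2\el}>0$, so $\ell_\sinfty^q$ is continuous there for any $q>0$. By Tietze I extend $\ell_\sinfty^q|_A$ to a bounded continuous $g$ on $Z\times Z$. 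This $g$ is uniformly continuous because $Z\times Z$ is compact. Since $\pi_\sinfty$ is concentrated on $\{\ell_\sinfty\ge\frac1\el\}\subset A$, the third error term is zero.

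For the first term, fix $\epsilon\in(0,\frac1{2\el})$ small, and take $\delta<\delta(k,\el,\epsilon)$ and $i\ge i(k,\el,\epsilon)$. The main obstacle is that a point $(x_i,y_i)$ in the support of $\pi_i$ lies in $\{\ell_i\ge\frac1\el\}$, but its nearby points $(x,y)\in\ol U^2_\sinfty$ need not lie in $\{\ell_\sinfty\ge\frac1\el\}$, so property (2) of uniform convergence is not directly available. I would handle this with the first inclusion of Remark \ref{rem:uniform_convergence}. Since $\ell_i(x_i,y_i)\ge\frac1\el\ge \frac1\el-\epsilon$, there is $(x,y)$ with $\de^Z(x_i,x)+\de^Z(y_i,y)\le\delta$ and $\ell_\sinfty(x,y)\ge \frac1\el-2\epsilon$. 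Property (1) gives $\ell_i(x_i,y_i)\le \ell_\sinfty(x,y)+\epsilon$. For the reverse bound I apply property (2) with the level $\el'$ chosen so that $\frac1{\el'}\le\frac1\el-2\epsilon$, for instance $\el'=2\el$ and $\epsilon<\frac1{4\el}$, shrinking $\delta$ and enlarging $i$ accordingly. This yields $|\ell_i(x_i,y_i)-\ell_\sinfty(x,y)|\le\epsilon$ with $(x,y)\in A$.

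Combining these with uniform continuity of $g$ and of $t\mapsto t^q$ on a compact subinterval of $(0,\infty)$ bounded by the uniform $\tau$-bound gives
\[
|\ell_i^q(x_i,y_i)-g(x_i,y_i)|\le |\ell_i^q(x_i,y_i)-\ell_\sinfty^q(x,y)|+|g(x,y)-g(x_i,y_i)|\le \omega(\epsilon)+\omega_g(\delta)
\]
on the support of $\pi_i$. Hence the first term is at most $\omega(\epsilon)+\omega_g(\delta)$ for all large $i$. Letting $i\to\infty$ and then $\epsilon,\delta\to0$ proves the lemma. One point to be careful about is that the uniform $\tau$-bound on $\ol U^k_i$ must be uniform in $i$. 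This follows from property (1) together with boundedness of $\ell_\sinfty$ on $\ol U^2_\sinfty$.
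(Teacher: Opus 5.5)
Your proposal is correct and is essentially the paper's argument: both build a bounded continuous function on the compact space $Z\times Z$ via Tietze's extension theorem and then pass to the limit using the weak convergence $\pi_i\to\pi_\sinfty$. The difference is bookkeeping. The paper patches all $\ell_i$ and $\ell_\sinfty$ into one function $\hat\ell$ on the compact set $\Delta=\dot\bigcup_{i}\{\ell_i\geq\frac{1}{\el}\}\,\dot\cup\,\{\ell_\sinfty\geq\frac{1}{\el}\}$, so that $\int\ell_i^q\,\de\pi_i=\int\hat\ell^q\,\de\pi_i$ holds exactly. You instead extend only $\ell_\sinfty^q$ from the slightly larger set $\{\ell_\sinfty\geq\frac{1}{2\el}\}$ and bound $\int(\ell_i^q-g)\,\de\pi_i$ directly via properties (1) and (2); these two properties alone already give closedness of that set and uniform continuity of $\ell_\sinfty$ on it, so your extra assumption that the $\tau_i$ are continuous is not needed.
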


\begin{proof}[Proof of the Lemma] There exists a metric $\de^{Z}$ on $Z:= \bigsqcup_{i \in \bar \N} \ol U_i$ such that 
\begin{center}  $\ol U_i \times \ol U_i$ converges in Hausdorff sense to $\ol U_\sinfty\times \ol U_\sinfty$  in $Z\times Z$. 
\end{center}
We set $$\Delta=\dot \bigcup_{i\in \N} \left\{\ell_i \geq \frac 1 \el\right\} \dot \cup \left\{ \ell_\sinfty \geq \frac 1 {\el}\right\} \subset Z\times Z.$$
and 
$$\hat \ell: \Delta \rightarrow (0, \infty), \ \ \hat \ell(x,y) = \begin{cases} \ell_i(x,y) & \mbox{ if } (x,y)\in \left\{\ell_i \geq \frac 1 \el\right\}, \\
\ell_\sinfty(x,y) & \mbox{ if } (x,y) \in \left\{ \ell_\sinfty\geq \frac 1 {\el}\right\}.
\end{cases}$$
{\it Claim:} $\hat \ell : \Delta \rightarrow (0, \infty)$ is continuous. 
\smallskip\\
{\it Proof of the Claim: } 
For $\epsilon>0$ 
let $i(\epsilon, \el)$ and $\delta(\epsilon, \el)$ be as in the definition of uniform convergence.

Let $(x,y)\in \left\{ \ell_\sinfty \geq \frac 1 {\el}\right\}$ and let $(x_i,y_i)\in \left\{ \ell_i\geq \frac 1 \el\right\}$ such that $(x_i, y_i)\rightarrow (x,y)$ in $Z\times Z$, i.e. for $\delta \in (0, \delta(\epsilon, \el))$ there exists $i_0\geq i(\epsilon, \el)$ such that  \begin{center}$\de^Z(x_i, x)+ \de^Z(y_i, y)\leq \delta$ for $i\geq i_0$.  \end{center}
It follows that 
$$\left| \ell_i(x,y) - \ell(x,y) \right|\leq \epsilon \mbox{ for } i\geq i_0$$
or 
$\lim_{i\rightarrow \infty} \ell_i(x_i, y_i) =\ell_\sinfty(x,y).$
This proves the claim. \hfill ${\scriptstyle \square}$
\smallskip\\
By construction $\Delta$ is a compact subset of $Z\times Z$.   
Hence, $\hat \ell$ is also bounded. We can extend  $\hat \ell$ as a bounded continuous function to $Z\times Z$. 
Hence
\begin{align*}\lim_{i\rightarrow \infty} \int \ell_i^q \de \pi_i= \lim_{i\rightarrow \infty} \int \hat \ell^q\de\pi_{i}
&= \int \hat \ell^q\de \pi_{\sinfty}= \int \ell_\sinfty^q\de \pi_{\sinfty}. 
\end{align*}
This finishes the proof of the lemma.
\end{proof} 
Lemma \ref{lem:first_convergence} implies that 
\begin{align}\label{ell_conv}\int \ell_i^p \de (\pi')^{\el, i, n}\rightarrow \int \ell_\sinfty^p \de (\pi^\el)^{\sinfty, n} = \ell_{p}( (\mu^\el)^{\sinfty, n}_0, (\mu^\el)^{\sinfty, n}_1) \in (0, \infty). \end{align}
Hence
$\ell_p((\mu')_0^{\el, i,n}, (\mu')_1^{\el, i,n})\in (0, \infty)$, and since $(\pi')^{\el, i, n}$ is concentrated in $\{\ell_i\geq \frac 1 {2\el}\}$ if follows from Remark \ref{rem:lorentz_wasserstein} that there exists 
$$\hat \pi^{\el, i,n}\in \Pi_{\leq}^{p-opt} \left((\mu')_0^{\el, i,n}, (\mu')_1^{\el, i,n}\right) .$$
\smallskip\noindent
{\bf \large (7)}
Now, we define iteratively   diagonal sequences and choose subsequences as follows: 

Firstly,  we consider the diagonal sequence $\left( (\pi^n)^{\sinfty, n}\right)_{n\in \N}$ of $\left( (\pi^\el)^{\sinfty, n}\right)_{\el, n\in \N}$. It satisfies 
\begin{enumerate}
\item $(\pi^n)^{\sinfty, n} \rightarrow \pi^{\sinfty}$ weakly as $n \rightarrow \infty$. 
\item $(\mu^n)_{j}^{\sinfty, n}\rightarrow \mu_{j}^{\sinfty}$ weakly for $n \rightarrow \infty$, $j=0,1$. 
\end{enumerate}
Since $\tau_\sinfty$ is continuous, it follows for any power $q\in (0, \infty)$  that 
\begin{align}\label{tau_conv}\lim_{n\rightarrow \infty} \int \ell_\sinfty^q \de (\pi^n)^{\sinfty, n} = \int \ell_\sinfty^q \de \pi^\sinfty.
\end{align}

Secondly, from Prokhorov's Theorem and since $(\mu')^{n, i, n}_j, j=0, 1$, are compactly supported in $Z$, 
a subsequence $(\hat \pi^{n, i_\alpha,n})_{\alpha\in \N}$ converges weakly to a measure $\hat \pi^{n, \sinfty, n}$. Since $\ell_i$ converges uniformly to $\ell_\sinfty$, we have  that $\hat \pi^{n, \sinfty, n}$  is a coupling between the marginal distributions $(\mu^n)_{j}^{\sinfty,n}$, $j=0,1$   with $\hat \pi^{n, \sinfty, n}(\{\ell_\sinfty\geq 0\}\cap \ol U_\sinfty\times \ol U_\sinfty)=1$. We can iteratively choose subsequences and extract a subsequence $(i_\alpha)_{\alpha \in \N}$ such that  $(\hat \pi^{n, i_\alpha,n})_{\alpha\in \N}$ converges weakly to a measure $\hat \pi^{n,\sinfty,n}$ for all $n\in \N$.  By changing the index we assume that the subsequence $(i_\alpha)_\alpha$ is the sequence $(i)_{i\in \N}$ itself. 
Moreover, again by Prokhorov's Theorem there is a subsequence $(\hat \pi^{n_\beta, \sinfty, n_\beta})_{\beta}$ that converges weakly to $\hat \pi^\sinfty$, a coupling between $\mu^\sinfty_0$ and $\mu^\sinfty_1$ such that $$\hat \pi^\sinfty(\{\ell_\sinfty\geq 0 \}\cap\ol U_\sinfty\times \ol U_\sinfty)=1.$$  Again, we assume that the sequence $(\hat \pi^{n, \sinfty, n})$ already converges.

Thirdly, we consider the diagonal sequence $(\hat \pi^{n, n, n})_{n\in \N}$ of $(\hat \pi^{n, i, n})_{i, n\in \N}$. It satisfies for $n\rightarrow \infty$
\begin{enumerate}
\item $\hat \pi^{n, n, n} =:\hat \pi^n\rightarrow \hat \pi^{\sinfty} \mbox{ weakly}$
\item
$(\mu')_j^{n,n,n} =: \hat \mu_j^n \rightarrow \mu^\sinfty_j \mbox{ weakly, } j=0,1.$
\end{enumerate}
We also note that from \eqref{ell_conv} and \eqref{tau_conv} it follows that \begin{align}\label{ell_conv_2}\lim_{n\rightarrow \infty} \int \ell_n^q \de (\pi')^{n, n, n} = \int \ell^q_\sinfty \de \pi^\sinfty.\end{align}
for any power $q\in (0, \infty)$.
In the following we change the index $n$ back to $i$.
\begin{lemma}\label{lem:optimal_coupling}$\hat \pi^\sinfty\in \Pi_{\geq}^{p-opt}(\mu_0, \mu_1)$.\end{lemma}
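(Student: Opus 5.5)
We want to show that $\hat\pi^\sinfty$ is an $\ell_\sinfty^p$-optimal coupling between $\mu_0^\sinfty$ and $\mu_1^\sinfty$; the subscript $\geq$ in the statement is read as $\leq$, i.e.\ causal optimal couplings. By Step (7) we already know that $\hat\pi^\sinfty\in\Pi_{\leq}(\mu_0^\sinfty,\mu_1^\sinfty)$, since it is concentrated on $\{\ell_\sinfty\geq 0\}\cap \ol U_\sinfty\times\ol U_\sinfty$. So it remains to prove
$$\int\ell_\sinfty^p\,\de\hat\pi^\sinfty\;\geq\;\int\ell_\sinfty^p\,\de\pi^\sinfty=\ell_p(\mu_0^\sinfty,\mu_1^\sinfty)^p,$$
because $\pi^\sinfty$ is optimal by assumption. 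The plan is to compare both integrals through the approximating sequence: an upper semicontinuity bound for $\hat\pi^i\to\hat\pi^\sinfty$, the optimality of $\hat\pi^i$ against the competitor $(\pi')^{i,i,i}$ in $Y_i$, and the convergence \eqref{ell_conv_2} for the competitors.

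First, optimality of $\hat\pi^{i}=\hat\pi^{i,i,i}$ in $\Pi^{p-opt}_\leq((\mu')^{i,i,i}_0,(\mu')^{i,i,i}_1)$ gives
$$\int\ell_i^p\,\de\hat\pi^i\geq\int\ell_i^p\,\de(\pi')^{i,i,i},$$
and by \eqref{ell_conv_2} with $q=p$ the right-hand side tends to $\int\ell_\sinfty^p\,\de\pi^\sinfty$. Therefore
$$\liminf_i\int\ell_i^p\,\de\hat\pi^i\geq\ell_p(\mu_0^\sinfty,\mu_1^\sinfty)^p.$$

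Second, and this is the main obstacle, we need the upper semicontinuity
$$\limsup_i\int\ell_i^p\,\de\hat\pi^i\leq\int\ell_\sinfty^p\,\de\hat\pi^\sinfty.$$
Lemma \ref{lem:first_convergence} does not apply, because $\hat\pi^i$ need not be concentrated on $\{\ell_i\geq\frac1\el\}$. Instead I would use property (1) of Definition \ref{def:uniform_convergence}, in the $Z$-version of the subsequent remark. Define on $Z\times Z$ the function $\hat\tau(x,y)=\limsup\hat\ell^p(x',y')$, taken over $(x',y')\to(x,y)$ with $(x',y')$ ranging over $\bigsqcup_i\{\ell_i\geq0\}\sqcup\{\ell_\sinfty\geq0\}$. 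Since the $\ell_i$ are uniformly bounded on $\ol U_i^k$ (by condition (3) of the cover together with uniform convergence), $\hat\tau$ is a bounded upper semicontinuous function. Property (1) gives $\ell_i(x_i,y_i)\leq\ell_\sinfty(x,y)+\epsilon$ whenever $(x_i,y_i)$ is $\delta$-close to $(x,y)$ and $i$ is large. Combined with the upper semicontinuity of $\ell_\sinfty$ on $\{\ell_\sinfty\geq0\}$ (continuity of $\tau_\sinfty$, Lemma \ref{lem:cont_tau}, or the geodesic assumption), this should show that $\hat\tau$ agrees with $\ell_\sinfty^p$ on the limit causal set, up to $\epsilon$. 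Approximating $\hat\tau$ from above by continuous functions and applying weak convergence then yields
$$\limsup_i\int\ell_i^p\,\de\hat\pi^i\leq\int\ell_\sinfty^p\,\de\hat\pi^\sinfty+C\epsilon,$$
and letting $\epsilon\to0$ gives the bound. Some care is needed at limit points where $\ell_\sinfty=0$, where one only gets $\limsup\ell_i\leq\epsilon$. This is harmless since $p>0$ and the $\epsilon$-error is uniform.

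Combining the two steps,
$$\int\ell_\sinfty^p\,\de\hat\pi^\sinfty\geq\ell_p(\mu_0^\sinfty,\mu_1^\sinfty)^p,$$
so equality holds and $\hat\pi^\sinfty$ is optimal. If needed, strong dualisability then yields $\hat\pi^\sinfty(\Gamma)=1$, hence $\hat\pi^\sinfty(\{\ell_\sinfty>0\})=1$.
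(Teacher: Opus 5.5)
Your argument is correct. Its skeleton matches the paper's: the lower bound $\liminf_i\int\ell_i^p\,\de\hat\pi^i\geq\int\ell_\sinfty^p\,\de\pi^\sinfty$ comes, exactly as in the paper, from optimality of $\hat\pi^i$ against the competitor $(\pi')^{i,i,i}$ together with \eqref{ell_conv_2}.

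The routes differ in the upper semicontinuity step. The paper proceeds by truncation:
\begin{itemize}
\item it restricts $\hat\pi^i$ to $\{\ell_i\geq\frac1L\}$ and extracts a weak limit $\hat\pi^{L,\sinfty}$ concentrated on $\{\ell_\sinfty\geq\frac1L\}$;
\item it gets convergence of the truncated integrals from the argument of Lemma \ref{lem:first_convergence}, i.e.\ continuity of the glued function on the compact set $\Delta$, which uses both properties (1) and (2) of Definition \ref{def:uniform_convergence};
\item it shows $\hat\pi^{L,\sinfty}\leq\hat\pi^\sinfty|_{\{\ell_\sinfty\geq 1/L\}}$ by testing with nonnegative continuous functions, and then lets $L\to\infty$.
\end{itemize}
You instead take the upper semicontinuous envelope of the glued function over all causal pairs and apply the Portmanteau inequality for a bounded u.s.c.\ integrand in one step. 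This needs only the one-sided property (1) and upper semicontinuity of $\tau_\sinfty$ (Lemma \ref{lem:cont_tau}), and it handles pairs near $\{\ell_\sinfty=0\}$ without any truncation.

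What your route gains is a cleaner passage to the limit. In the paper, $L\to\infty$ is phrased as monotone convergence at fixed $i$, although the threshold $i(\eta)$ and the extracted subsequence depend on $L$. Repairing this needs the elementary estimate $\int_{\{0<\ell_i<1/L\}}\ell_i^p\,\de\hat\pi^i\leq L^{-p}$, an issue your argument never meets. The paper's route, in turn, has the advantage of simply recycling Lemma \ref{lem:first_convergence}.

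When you write it up, make three points explicit:
\begin{itemize}
\item Work in the disjoint-union space $Z$ of Lemma \ref{lem:first_convergence}, so that the glued function is well defined.
\item Include the point itself in the $\limsup$, and set $\hat\tau=0$ off the closure of the causal sets, so that $\hat\tau\geq\ell_i^p$ on $\{\ell_i\geq0\}$.
\item Applying property (1) at the limit point itself gives $\hat\tau=\ell_\sinfty^p$ exactly on $\{\ell_\sinfty\geq0\}$, so the $C\epsilon$ error term is unnecessary.
\end{itemize}
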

\begin{proof}[Proof of the Lemma]
Let us fix $L\in \N$. 
We set 
 $\hat \pi^i|_{\{ \ell_i \geq \frac 1 L\}}= \hat \pi^{L,i}$, $i\in \N$. 
 
After extracting subsequence $\hat \pi^{L, i}$ converges weakly to a measure $\hat \pi^{L,\sinfty}$ concentrated in $\{ \ell_\sinfty\geq \frac 1 L\}=: \ol V^L_\sinfty$. 
\smallskip\\
{\it Claim:}  $\hat \pi^{L, i}\leq  \hat \pi^\sinfty|_{\ol V^L_\sinfty}$. 
\smallskip\\
{\it Proof of the Claim:}
Let us pick an arbitrary nonnegative, continuous functions $\phi$ on $Z\times Z$. By weak convergence we have 
$$ \int \phi  \de \hat \pi^{L, \sinfty}=\lim_{i\rightarrow \infty} \int \phi \de \hat \pi^{L, i} = \lim_{i\rightarrow \infty} \int_{\ol V_i^{L}} \phi \de \hat \pi^i\leq \int_{\ol V_\sinfty^{L}} \phi \de \hat \pi^\sinfty.$$
This is the claim.
\hfill $\triangle$
\smallskip

As in the previous lemma we can prove that $\lim_{i\rightarrow \infty} \int \ell_i^p\de \hat \pi^{L, i} = \int \ell_\sinfty^p \de \hat \pi^{L, \sinfty}$. Hence
$$\lim_{i\rightarrow \infty} \int \ell_i^p\de \hat \pi^{L, i} = \int \ell_\sinfty^p \de \hat \pi^{L, \sinfty} \leq \int \ell_\sinfty^p \de \hat \pi_\sinfty|_{\ol V^L_\sinfty}\leq \int \ell_\sinfty^p \de \hat \pi^\sinfty.$$
Therefore, given $\eta>0$ there exists $i(\eta)$ such that for $i\geq i(\eta)$ we have 
$$\int\ell_i^p \de \hat \pi^{L, i}\leq \int \ell^p_\sinfty \de \hat \pi^\sinfty+ \eta$$ 
and by monotone convergence w.r.t. $L\rightarrow \infty$ we have 
$$\int_{\{\ell_i>0\}}\ell_i^p \de \hat \pi^i\leq \int \ell^p_\sinfty \de \hat \pi^\sinfty+ \eta$$ 
This yields
$$\limsup_{i\rightarrow \infty} \int \ell_i^p \de \hat \pi^i =\limsup_{i\rightarrow \infty} \int_{\{\ell_i>0\}} \ell_i^p \de \hat \pi^i\leq \int \ell_\sinfty^p \de \hat \pi^\sinfty.$$
On the other hand, by \eqref{ell_conv_2} we have
$$\int \ell_i^p \de \hat \pi^i \geq \int \ell_i^p \de (\pi')^{i,i,i}\rightarrow \int \ell_\sinfty^p \de \pi^\sinfty.$$
Consequently 
$\int \ell_\sinfty^p \de \pi^\sinfty\leq \int \ell_\sinfty^p \de \hat \pi^\sinfty.$
Since $\pi^\sinfty$ is  optimal  between $\mu_0$ and $\mu_1$, also $\hat \pi^\sinfty$ is an optimal coupling. 
\end{proof}
\noindent
The proof of the previous lemma shows in particular that 
\begin{align}\label{ell_conv_3}\lim_{i\rightarrow \infty} \int \ell_i^p \de \hat \pi^i= \int \ell_\sinfty^p \de \hat \pi^\sinfty. \end{align}

\noindent
{\bf \large (8)}
 Since $\hat \pi^\sinfty$ is optimal and since $\mu_0$ and $\mu_1$ are strongly $p$-dualizable, it follows that $\hat \pi^\sinfty$ is concentrated on $\Gamma$. Hence 
$$\hat \pi^\sinfty(\{\ell_\sinfty>0\})=1.$$
It follows that $\hat \pi^\sinfty(\{ \ell_\sinfty\geq \lambda\})\rightarrow 1$ for $\lambda\rightarrow 0$.

Hence, similar as in the previous step {\bf (5)} we can show the following.  
If we choose $\lambda_i\downarrow 0$ and pick the diagonal sequence, it follows that 
$$\lim_{i\rightarrow \infty} \hat \pi^i(\ell_i\geq \lambda_i)= 1.$$

Hence, we define 
$$c^i:= \hat \pi^i(\ell_i\geq \lambda_i), \ \  (c^i)^{-1} \hat \pi^i|_{\{\ell_i\geq \lambda_i\}}= (\hat \pi')^{i}.$$
It follows $ (\hat \pi')^i\rightarrow \hat \pi^\sinfty \mbox{ weakly}.$

We also set $(\mu_{j}')^i:= (P_j)_\sharp (\hat \pi')^{i}$. Since the restriction of an optimal coupling still is optimal \cite[Lemma 2.10]{camolorentz}, we have $(\hat \pi')^i\in \Pi_{\leq}^{p-opt}(\mu_{0}^i, \mu_{1}^i).$ Moreover, $(\hat \pi')^{i}(\{\ell_i>0\})=1$. Hence $(\mu_{0}^i, \mu_{1}^i)$ are $p$-dualizable.
\smallskip\\
{\bf (8.1)}
Repeating the arguments from before we obtain that 
\begin{align}\label{ei5}\limsup_{i\rightarrow \infty} S_N((\mu_{j}')^i|\m_i)\leq \limsup_{i\rightarrow \infty} S_N(\hat \mu_j^i|\m_i).\end{align}
\\
{\bf (9)}
Combining \eqref{ei1}, \eqref{ei2}, \eqref{ei3}, \eqref{ei4} and \eqref{ei5} it follows
$$\limsup_{i\rightarrow \infty} S_N((\mu'_j)^i|\m_i)\leq S_N(\mu_{j}), \ \ j=0, 1.$$
The probability measures $(\mu_0')^i=: \mu_0^i$ and $(\mu_1')^i=:\mu_1^i$ are $\m_i$-absolutely continuous,  have bounded density, and are timelike $p$-dualizable. Since $Y_i$ satisfies the condition $\TCD_p(0,N)$, 
$\forall i \in \N$ there exists an $\ell_p$-geodesic $(\mu_t^i)_{t\in [0,1]}$ such that 
$$S_N(\mu^i_t|\m_i) \leq (1-t) S_N(\mu^i_0|\m_i) + t S_N(\mu^i_1|\m_i).$$
Let $\eta^i$ be an $\ell_p$-optimal dynamical coupling representing the $\ell_p$-geodesic $(\mu_t^i)_{t\in [0,1]}$, i.e. $(e_t)_\sharp \eta^i= \mu_t^i$ for all $t\in [0,1]$. 

\smallskip
Since $Y_i$ is weak causal convex,  $\eta_i$ is concentrated on the set 
$$\Gamma_i^k= \left\{ \gamma \in \Geo(Y_i): \gamma(0), \gamma(1)\in \ol U^k_i \mbox{ and } \Im \gamma\subset \ol U^{k+1}_i\right\}$$
and 
$$\bigcup_{i\in \N\cup\{\infty\}} \Gamma_i^k\cup \Gamma_\sinfty^k \mbox{ is compact w.r.t. } \de^\sinfty.$$

By Prokhorov's Theorem we can therefore extract a subsequence from $(\eta^i)_{i\in \N}$ that converges weakly to a probability measure that $\eta^\sinfty$ that is supported in $\Gamma_\sinfty^k$. 

It holds that $(e_0, e_1)_\sharp \eta^\infty= \tilde \pi$ is a coupling between $\mu^\sinfty_0$ and $\mu^\sinfty_1$, and 
\begin{enumerate}
\item $(e_0, e_1)_\sharp \eta^i\rightarrow \tilde \pi$ weakly for $i\rightarrow \infty$, 
\item $(e_t)_\sharp\eta^i= \mu_t^i\rightarrow (e_t)_\sharp \eta^\sinfty= \mu_t^\sinfty$ weakly for $i\rightarrow \infty$. 
\end{enumerate}
In particular, by joint lower semi-continuity of the $N$-Renyi entropy we have that 
$$\liminf_{i\rightarrow \infty} S_N(\mu_t^i|\m_i)\geq S_N(\mu^\sinfty_t|\m_\sinfty).$$
It follows that 

$$S_N(\mu^\sinfty_t|\m_\sinfty) \leq (1-t) S_N(\mu^\sinfty_0|\m_\sinfty) + t S_N(\mu^\sinfty_1|\m_\sinfty).$$

Finally, we have to show that $\tilde \pi$ is an optimal coupling. Then, it follows that $t\in [0, 1]\mapsto (e_t)_\sharp \eta^\sinfty= \mu_t^\sinfty$ is an $\ell_p$-geodesic between $\mu_0^\sinfty$ and $\mu_1^\sinfty$.  This can be done along the same lines as in the proof of Lemma \ref{lem:optimal_coupling}. 

This shows the condition $\TCD_p(0,N)$. 
\medskip\\
{\bf (10)} The proof for the condition $\TCD_p^e(0,N)$ is very similar. One has to replace the $N$-Renyi entropy with the function $U_N$ that is a composition of the Boltzmann-Shanon entropy and $e^{\frac{1} N x}$. $U_N$ has the same semi-continuity properties under weak convergence. Moreover, under the previous truncation arguments for the involved  couplings $U_N$ also satisfies the inequalities that we derive for $S_N$ in {\bf (3.1)}, {\bf (4.1)},  {\bf (5.1)} and {\bf (8.1)}. This is again similar to  the proof of Theorem 3.15 in \cite{camolorentz}.

The proof in the case of the condition $\TCD_p^e(K,N)$ then only requires as an additional ingredient the stability of the $L^2((\hat \pi')^i)$-norms of the signed time separation function $\ell_i$ along the final sequence of couplings $(\hat \pi')^i$. 
For this recall  first \eqref{ell_conv_3} for the couplings $\hat \pi^i$ converging to $\hat \pi^\sinfty$. Now $(\hat \pi')^i$ is obtained by restriction of $\hat \pi^i$ to $\{\ell_i\geq \lambda_i\}$ for $\lambda_i\downarrow 0$. 
For every $i\in \N$ we choose $\lambda_i>0$ small enough such that 
$$\int \ell_i^2 \de(\hat \pi')^i- \varepsilon \leq \int \ell_i^2\de \hat \pi^i \leq \int \ell_i^2\de (\hat \pi')^i+ \varepsilon. $$
Hence 
$$ \left\| \tau_i \right\|^2_{L^2((\hat \pi')^i)}= \int \ell_i^2 \de(\hat \pi')^i \rightarrow \int \ell_\sinfty^2 \de \hat \pi^\sinfty= \left\| \tau_\sinfty\right\|^2_{L^2(\hat \pi^i)}.$$
This follows from  equation \eqref{ell_conv_2} for $\hat \pi^i$ and the definition of $(\hat \pi')^i$.
This finishes the proof.
\end{proof}
\begin{theorem}\label{th:stabilityTMCP}
Let $(Y_i, \de_i, \ell_i, \mathcal U_i, \m_i)_{i\in \N}$ be a sequence of properly covered, measured Lorentzian geodesic spaces that satisfy  $\TMCP^e(K,N)$. We assume $(Y_i, \de_i, \ell_i, \mathcal U_i, \m_i)_{i\in \N}$ converges in the measured  $\ell$ sense to a properly covered, measured Lorentzian geodesic space $(Y, \de, \ell, \mathcal U, \m)$.  Let $\mathcal U_i$ be weak causal convex $\forall i\in \N$. 

Then $(Y, \de, \ell, \mathcal U, \m)$ satisfies the property $\TMCP^e(K,N)$. 

If $Y_i$ satisfies the property $\TMCP^e(K,N)$ locally in $U^k_i$ for $k\in \N$ and for all $i\in \N$, then $Y$ satisfies the  property $\TMCP^e(K,N)$ locally in $U^k$. 
\end{theorem}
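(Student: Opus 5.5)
The plan is to run the proof of Theorem \ref{th:stabilityTCD} again in a simplified form. The target measure is now a Dirac mass, and the only coupling between $\mu_0$ and $\delta_{x_1}$ is $\mu_0\otimes\delta_{x_1}$, so all the optimality issues handled in Lemma \ref{lem:optimal_coupling} and in step (8) go away.

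\textbf{Setup and approximation at the limit.} Fix $\mu_0^\sinfty\in \mathcal P_c(Y)\cap D(\Ent_{\m_\sinfty})$ and $x_1\in Y$ with $\mu_0^\sinfty(I^-(x_1))=1$. Choose $k$ so that $\spt\mu_0^\sinfty\cup\{x_1\}\subset \ol U^k_\sinfty$. By Lemma \ref{lem:cont_tau}, $\tau_\sinfty$ is continuous. Hence $\mu_0^\sinfty(\{\tau_\sinfty(\cdot,x_1)\geq 1/\el\})\to 1$, and we may replace $\mu_0^\sinfty$ by its normalized restriction $(\mu^\el)_0$ to this set. As in step (3), $\limsup_\el U_N$ of these restrictions is at least $U_N(\mu_0^\sinfty)$; in fact the normalized restrictions have entropy converging, by monotone convergence of $\int_{A_\el}\rho\log\rho$. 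We also get $\|\tau_\sinfty(\cdot,x_1)\|_{L^2}$ convergence. As in step (2), we may first approximate by bounded densities, so that $\Ent$ converges.

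\textbf{Transport to $Y_i$.} Use $W_2$-optimal couplings $\boldsymbol p^i$ between $\bar\m_\sinfty$ and $\bar\m_i$ in $Z^k$ to push the density $\rho^\el_0$ to $\mu_0^{\el,i}\ll\m_i$, exactly as in step (4). Jensen gives $\Ent_{\m_i}(\mu_0^{\el,i})\leq \Ent_{\m_\sinfty}((\mu^\el)_0)$ up to the normalization constants, which converge. Pick $x_1^i\in \ol U^k_i$ with $x_1^i\to x_1$ in $Z^k$. By Remark \ref{rem:uniform_convergence}, for large $i$ the set $B_\delta(\{\tau_\sinfty(\cdot,x_1)\geq 1/\el\})$ lies in $\{\ell_i(\cdot,x_1^i)\geq 1/(2\el)\}$. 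Therefore $\mu_0^{\el,i}$-almost all mass sits in $I^-(x_1^i)$. After a restriction and renormalization as in step (5), the mass lies entirely in $I^-(x_1^i)$, with $U_N$ changing by a factor tending to $1$. Lemma \ref{lem:first_convergence}, applied to $\mu_0^{\el,i}\otimes\delta_{x_1^i}\to (\mu^\el)_0\otimes\delta_{x_1}$, yields $\|\tau_i(\cdot,x_1^i)\|_{L^2(\mu_0^{\el,i})}\to\|\tau_\sinfty(\cdot,x_1)\|_{L^2((\mu^\el)_0)}$.

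\textbf{Apply $\TMCP^e$ and pass to the limit.} $\TMCP^e(K,N)$ on $Y_i$, or locally in $U^k_i$ for the local statement, gives dynamical plans $\eta^i$ on $\Geo(Y_i)$ from $\mu_0^{\el,i}$ to $\delta_{x_1^i}$ with
\[U_N((e_t)_\sharp\eta^i)\geq \sigma^{(1-t)}_{K,N}\big(\|\tau_i(\cdot,x_1^i)\|_{L^2}\big)\,U_N(\mu_0^{\el,i}).\]
By weak causal convexity these plans are concentrated on $\Gamma^k_i$. The compactness lemma for $\Gamma^k_i$ and Prokhorov then give a weakly converging subsequence $\eta^i\to\eta^\sinfty$ supported in $\Gamma^k_\sinfty\subset\Geo(Y)$. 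Since $(e_0,e_1)_\sharp\eta^\sinfty=(\mu^\el)_0\otimes\delta_{x_1}$ is the unique coupling, $\eta^\sinfty$ is automatically $\ell_p$-optimal and $(e_t)_\sharp\eta^\sinfty$ is an $\ell_p$-geodesic. Upper semicontinuity of $U_N$ under joint weak convergence of measures and reference measures, together with continuity of $\sigma$ in $\theta$ (fine when $\theta<\pi_{K/N}$; otherwise the inequality is vacuous or infinite, to be handled separately), gives the inequality for $(\mu^\el)_0$. A final diagonal argument in $\el$, and in the bounded-density approximation index, using the compactness of $\Gamma^k_\sinfty$ once more, gives the inequality for $\mu_0^\sinfty$.

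The main obstacle is the last diagonal step. In particular, one must make sure that the entropy of the approximations converges rather than only being semicontinuous in the wrong direction. For this I would rely on the Step 2b argument of \cite[Thm 3.15]{camolorentz}, as was done in the proof of Theorem \ref{th:stabilityTCD}.
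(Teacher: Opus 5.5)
Your proposal is correct and follows essentially the same route as the paper. Both arguments reduce to bounded densities concentrated on $\{\tau(\cdot,x_1)\geq 1/\el\}$, transport them to $Y_i$ with the couplings $\boldsymbol p^i$, choose $x_1^i\to x_1$ and restrict to $\{\ell_i(\cdot,x_1^i)\geq \frac{1}{2\el}\}$ via Remark \ref{rem:uniform_convergence}, apply $\TMCP^e(K,N)$ on $Y_i$, pass to the limit, and undo the reductions by restriction and truncation; your explicit limiting step (compactness of $\Gamma^k_i$ from weak causal convexity, optimality of the limit plan from uniqueness of the coupling $\mu_0\otimes\delta_{x_1}$, Lemma \ref{lem:first_convergence} for the $L^2$-norms of $\tau$, and upper semicontinuity of $U_N$) fills in what the paper compresses into ``stability of the defining inequality''.
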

\begin{proof} We fix $\mu_0^\sinfty= \rho_0^\sinfty \de \m_\sinfty$ in $\mathcal P_c(Y_\sinfty) \cap D(\Ent_{\m_\sinfty})$ and $x_1^\sinfty$ such that $x\ll x_1^\sinfty$ for $\mu_0^\sinfty$-a.e. $x\in X_\sinfty$. We can assume that  $\mu_0^\sinfty (\ol U^k_\sinfty)=1$,  $x_1^\sinfty \in \ol U_\sinfty^k$ and $J^+(\supp \mu_0^\sinfty)\cap J^-(x_1^\sinfty)\subset \ol U^{k+1}_\sinfty$.  We fix a metric space $Z$ where $\ol U^{k+1}_i$ and $\ol U^k_i$ converge in Hausdorff sense to $\ol U^{k+1}_\sinfty$ and $\ol U^{k}_\sinfty$, respectively.  In the following we omit the superscript $k$.  We also set $\bar \m_i= \m_\sinfty(\ol U_i)^{-1} \m_i|_{\ol U_i}$ for $i\in \N\cup {\infty}$.  
\medskip

We  assume  that $\rho_0^\sinfty\in L^\sinfty(\bar \m_\sinfty)$ and $\mu_0^\sinfty(\{\ell_\sinfty(\cdot, x_1^\sinfty)\geq \frac 1 l\})=1$ for $l\in \N$. We will remove this assumption at the end of the proof ($\dagger$).

Let $\boldsymbol p^i$ be the  coupling between the measures $\bar \m_\sinfty$ and $\bar \m_i$ that we introduced in the previous proof. Using the coupling $\boldsymbol p^i$ we construct a sequence of probability measures $\hat \mu_0^i = \hat \rho_0^i \de \m^i$ such that 
\begin{enumerate}
\item $\left\| \hat \rho^i_0\right\|_{L^\sinfty} \leq \left\| \rho_0^i \right\|_{L^\sinfty}.$
\item $\Ent_{\m^i}(\mu_0^i) \leq \Ent_{\m^\sinfty}(\mu_0^\sinfty)$.
\item $\mu_0^i \rightarrow \mu_0^\sinfty$ weakly in $Z$. 
\end{enumerate}
This is very similar to the construction in {\bf (4)} in the proof of the previous theorem (also compare with {\bf Step 1a} in the proof of Theorem 3.14 in \cite{camolorentz}).

\medskip

From Hausdorff convergence of $\ol U_i$ ot $\ol U_\sinfty$ we find a sequence $x^i_1\in \ol U_i$ that converges to $x_1$ in $Z$.
From uniform convergence of $\ell_i$ to $\ell_\sinfty$ we have  that there is some $i_0$ s.t. \begin{center}$B^Z_\delta(\{\ell_\sinfty(\cdot, x^\sinfty_1)\})\cap \ol U_i\subset \{\ell_i(\cdot, x^i_1)\geq \frac 1 {2 l}\}$ for $i\geq i_0=i(l,k)$
\end{center}
(Remark \ref{rem:uniform_convergence}).
Hence, by weak convergence, $$\liminf_{i\rightarrow \infty} \mu^i_0 (\{\ell_i(\cdot, x^i_1)\geq \frac 1 {2 l}\})\geq \mu^\sinfty_0( \{\ell_\sinfty(\cdot, x^\sinfty_1)\}\geq \frac 1 l )=1.$$
Therefore, we define $(\mu^l)_0^i= (b^{l,i})^{-1} \mu_0^i|_{\{\ell_i(\cdot, x_1^i)\geq \frac 1 {2l}\}}.$ It follows
\begin{enumerate}
\item $(\mu^l)_0^i\rightarrow \mu^\sinfty_0$ weakly, 
\item $\limsup \Ent_{\m_i}((\mu^l_0)^\sinfty)\leq \Ent_{\m_\sinfty}(\mu_0^\sinfty).$
\end{enumerate}
Since along the sequence we have the condition $\TMCP^e(K,N)$, it follows
$$U_N(\hat \mu_t^i)\leq \sigma_{K,N}^{(1-t)}\left(\left\|\tau(\cdot, x_1^i)\right\|_{L^2((\mu^l)_0^i)} \right)U_N(\hat \mu_0^i).$$
for an $\ell_p$-geodesic $\hat \mu_t^i$ between $(\mu^l)_0^i$ and $\delta_{x_1^i}$.
\bigskip

Finally, we have to remove the assumptions ($\dagger$).
To achieve this we define 
$$(\mu_0^l)^\sinfty=\frac 1 {c_l} \mu_0^\sinfty|_{\{ \ell_\sinfty(\cdot, x_1^\sinfty)\geq \frac 1 l\}}$$
where $c_l= \mu_0^\sinfty(\{ \ell_\sinfty(\cdot, x_1^\sinfty)\})\rightarrow 1$ as $l \rightarrow \infty$. It follows that for $l\rightarrow \infty$
\begin{enumerate} 
\item $(\mu_0^l)^\sinfty\rightarrow \mu_0^\sinfty$ weakly,
\item $\limsup \Ent_{\m_\sinfty}((\mu_0^l)^\sinfty)\leq \Ent_{\m_\sinfty}(\mu_0^\sinfty).$
\end{enumerate}

Moreover, we define $\bar (\rho^l)^\sinfty_{0,k}= \frac 1 {\bar c_{0,k}} \min\{ (\rho^l)_0^\sinfty, k\}$ where $\bar c_{0,k}$ is chosen such that $(\mu^l)_{0,k}^\sinfty= (\rho^l)^\sinfty_{0,k} \de \m_\sinfty$ is a probability measure.  Again, it follows for $k\rightarrow \infty$
\begin{enumerate} 
\item $(\bar \mu^l)_{0,k}^\sinfty\rightarrow \mu_0^\sinfty$ weakly,
\item $\lim\Ent_{\m_\sinfty}((\mu^l)_{0, k}^\sinfty)= \Ent_{\m_\sinfty}((\mu^l)_0^\sinfty).$
\end{enumerate}
By stability of the defining inequality for the measure contraction property, we can conclude.
\end{proof}
\subsection{Stability synthetic timelike sectional curvature bounds}
\begin{theorem}\label{th:stabilityTCBB}
Let $(Y_i, \de_i, \ell_i, \mathcal U_i)_{i\in \N}$ be  properly covered Lorentzian geodesic spaces that have globally timelike sectional curvature bounded from below by $K$, and  converge to a properly covered Lorentzian geodesic space $(Y, \de, \ell, \mathcal U)$.  Then $(Y, \de, \ell, \mathcal U)$ has  timelike sectional curvature bounded from below by $K$. 
\end{theorem}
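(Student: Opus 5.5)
We will show that every $U^k$ (or rather its interior) is a $\ge K$-comparison neighborhood in the sense of Definition \ref{def:4pointcondition}. Since the $U^k$ exhaust $Y$, this covers $Y$ by such neighborhoods. Condition (i), continuity of $\tau$ on $(U\times U)\cap\tau^{-1}([0,\pi_{-K}))$, we take from Lemma \ref{lem:cont_tau}. The $\tau_i$ are continuous because each $Y_i$ is a global $\ge K$-comparison neighborhood, and then $\tau_\sinfty$ inherits continuity.

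For condition (ii), fix a timelike future endpoint-causal configuration $(y,x,z_1,z_2)$ in $U^k$ with $\tau(y,z_2)<\pi_{-K}$. The past case is symmetric. First we treat the case where all relevant pairs are timelike with a quantitative margin: $\ell(y,x)$, $\ell(x,z_1)$, $\ell(y,z_i)$, $\ell(x,z_i)\ge \frac1\el$ for some $\el\in\N$. Using the Hausdorff convergence of $\ol U^k_i$ to $\ol U^k$ in $Z^k$, we pick approximating points $y_i,x_i,z_{1,i},z_{2,i}\in\ol U^k_i$. By Remark \ref{rem:uniform_convergence}, for large $i$ all these pairs satisfy $\ell_i\ge\frac1{2\el}>0$, and the corresponding values $\ell_i$ converge to the values of $\ell$. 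In particular $y_i\ll x_i\ll z_{1,i}$.

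The main obstacle is the causal relation $z_{1,i}\le z_{2,i}$, which need not survive approximation when $\ell(z_1,z_2)=0$. We would split into two cases.
\begin{itemize}
\item[(a)] If $\ell(z_1,z_2)\ge\frac1\el$ after possibly enlarging $\el$, then $z_{1,i}\ll z_{2,i}$ holds for large $i$, and the global comparison in $Y_i$ gives $\tau_i(z_{1,i},z_{2,i})\ge\bar\tau(\bar z_{1,i},\bar z_{2,i})$.
\item[(b)] If $\tau(z_1,z_2)=0$, we must show that the comparison value $\bar\tau(\bar z_1,\bar z_2)$ is $0$. Here we first try to replace $z_{2,i}$ by a point on a timelike geodesic (which exists since $Y_i$ is geodesic) from $x_i$ or $y_i$ through the approximation of $z_2$. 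Alternatively, we pick $z_{2,i}$ as a point with $z_{1,i}\le z_{2,i}$ converging to $z_2$. For this we use that $z_1\le z_2$ can be realized by a causal curve, approximated via property (1) of Definition \ref{def:uniform_convergence} together with the geodesic precompactness lemma in reverse.
\end{itemize}
If this fails, the fallback is to use the monotonicity in the model space. Moving $z_2$ slightly to the future along a timelike geodesic from $z_1$ (the Lorentzian geodesic structure of $Y$ provides one if $I^+$ is nonempty) produces $z_2'$ with $z_1\ll z_2'$. We then apply case (a) and let $z_2'\to z_2$, using the continuity of $\tau$ and the continuous dependence of the comparison configuration on the six side lengths.

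Once the inequality $\tau_i(z_{1,i},z_{2,i})\ge\bar\tau(\bar z_{1,i},\bar z_{2,i})$ holds along the sequence, we pass to the limit. Since the model value $\bar\tau(\bar z_1,\bar z_2)$ is a continuous function of the data $\tau(y,x),\tau(y,z_j),\tau(x,z_j)$ in the range below $\pi_{-K}$, the right-hand sides converge to $\bar\tau(\bar z_1,\bar z_2)$. For the left-hand side we use property (1) of uniform convergence, $\ell_i(z_{1,i},z_{2,i})\le\ell(z_1,z_2)+\epsilon$, to get $\tau(z_1,z_2)\ge\limsup\tau_i(z_{1,i},z_{2,i})$. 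Combining the two limits yields $\tau(z_1,z_2)\ge\bar\tau(\bar z_1,\bar z_2)$.

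Finally, we remove the margin $\frac1\el$ for the timelike pairs $(y,x)$ and $(x,z_1)$; these are positive by assumption, so some $\el$ works. Pairs $(y,z_i)$ and $(x,z_i)$ with $z_1\le z_2$ are automatically timelike by the push-up property of the reverse triangle inequality, so no degenerate cases remain there. The genuinely delicate point is the null pair $(z_1,z_2)$, treated in case (b).
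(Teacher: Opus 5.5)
Your argument is correct and is essentially the paper's. The fallback in case (b) is exactly what the paper does, except that the paper perturbs $z_2$ from the outset, so it needs no case split. That fallback consists of: pushing $z_2$ to a nearby $z_2'\in I^+(z_2)$ so that $z_1\ll z_2'$ by push-up; running the margin argument with approximating configurations in $Y_i$ and the global comparison there; then letting $z_2'\to z_2$ via continuity of $\tau$ (Lemma \ref{lem:cont_tau}) and of the model comparison value.

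You can drop the first two attempts in (b). $\ell$-convergence gives no mechanism for approximating a null pair by causal pairs in $Y_i$, and the geodesic precompactness lemma has no ``reverse''. Also, $z_2'$ should be taken on a timelike curve issuing from $z_2$, not from $z_1$.
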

\begin{jjj}
It will be become clear from the proof that the following stronger statement holds: {\it 
If $U^k_i$ is $\geq K$-comparison neighborhood for $k\in \N$ and for all $i\in \N$, then $U^k$ is a $\geq K$-comparison neighborhood. }
\end{jjj}
\begin{proof}
Since $Y_i$ has globally timelike sectional curvature bounded from below by $K$, its time separation function $\tau_i$ is continous. If follows from Lemma \ref{lem:cont_tau} that the time separation function $\tau$ of $Y$ is continuous as well. 

Let $(y, x, z_1, z_2)$ be  timelike future  endpoint-causal 4-point configuration  in $Y$ such that 
$\tau(y,z_2)< \pi_{-K}$ and let $(\bar y, \bar x, \bar z_1, \bar z_2)$  be  a comparison configuration in $\mathbb L^2(K)$. 

For $z_2$ we can choose, $z_2'\in I(z_2)$ such that $z_2'$ is arbitrarly close to $z_2$ and $z_2 \ll  z_2'$. It follows from Lemma 2.10 in \cite{kusa} that $z_1\ll  z_2'$. Hence $y\ll x \ll z_1\ll z_2'$.

By abuse of notation we will write $z_2$ instead of $z_2'$ in the following. 
There exists $\el\in \N$ such that  
 $$\min \left\{ \tau(x, y), \tau(y, z_i), \tau(x, z_i), \tau(z_1, z_2), : i\in \{1, 2\}\right\}\geq \frac 1 \el.$$
 It follows that there is a sequence of points $y^i, x^i, z_1^i, z_2^i\in Y_i$ that converge in $Z$ to $y, x, z_1, z_2$ such that 
 \begin{align*}
 \min\left\{ \tau(x^i, y^i), \tau(x^i, z^i_j), \tau(y^i, z^i_j), \tau(z_1, z_2), j\in \{1,2\}\right\}\geq \frac 1 {2 \el}.
 \end{align*}
 Hence $y^i\ll x^i \ll z_1^i\ll z_2^i$. Hence, $(y^i, x^i, z_1^i, z_2^i)$ is a timelike future endpoint-causal 4-point configuration. Moreover, the values of the time separations functions converge to the values in the limit. 
 
 We pick a comparison configuration $(\bar x^i, \bar y^i, \bar z_1^i, \bar z_2^i)$. It follows that 
 $$ \bar \tau(\bar z^i_1, \bar z^i_2)\leq \tau(z^i_1, z^i_2) = \ell(z_1^i, z_2^i).$$ The values of the time separation function $\bar \tau$ on this configuration in the model space converge to the values of $\bar \tau$ on $(\bar x, \bar y, \bar z_1, \bar z_2)$. 
It follows that 
 \begin{align*} \bar \tau(\bar z_1,\bar z_2)= \lim_{i\rightarrow \infty}\bar \tau(\bar z^i_1, \bar z^i_2)\leq \limsup_{i\rightarrow \infty} \ell(z^i_1, z^i_2)\leq \ell(z_1, z_2)= \tau(z_1, z_2).\end{align*}

We return to the notation $z'_2$ for $z_2$. We can send $z'_2$ to $z_2$. Then, by continuity of $\tau$ the values of the time separation function on $(y, x, z_1, z_2')$ converges to the values of the time separation function on $(y, x, z_1, z_2)$. Similarly, the values of the time separation function of the comparison configurations converge.  It follows that $\bar\tau(\bar z_1, \bar z_2) \leq \tau(z_1, z_2)$. 
 \end{proof}
 {\color{black}
 \begin{remark}
 For regular, connected, globally hyperbolic Lorentzian length spaces that have a timefunction timelike sectional lower curvature bounds globalize \cite{bhnr} and we can improve our theorem in this direction. 
 \end{remark}
 }
\section{$\ell$-convergence of generalized cones}\label{sec:concon}
\begin{lemma} Let $I$ be a compact interval.
We consider 2 continuous  functions $f, g: I\rightarrow [0, \infty)$
such that 
$g\leq f$. Let $L>0.$
Then we have that 
$$\{\ell_{-I\times_f [0,L]}\geq 0\}\subset \{ \ell_{-I\times_g [0,L]}\geq 0 \}$$
and for all $(s,x), (t,y) \in I\times [0,L]$ it holds
$$\ell_{-I\times_f [0,L]}(s,x,t,y)\leq \ell_{-I\times_g [0,L]}(s,x,t,y) .$$
\end{lemma}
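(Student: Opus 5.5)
The plan is to compare admissible curves directly, since both cones live on the same underlying set $I\times[0,L]$ and have the same class of admissible curves (Lipschitz $\alpha$ and $\beta$). Only the pointwise weight $f\circ\alpha$ versus $g\circ\alpha$ differs. The lemma then follows once we show that every future directed causal curve for $f$ is future directed causal for $g$, and that its $g$-length dominates its $f$-length.

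First I will fix an admissible curve $\gamma=(\alpha,\beta):[a,b]\rightarrow I\times[0,L]$ that is future directed causal in $-I\times_f[0,L]$. This means that $\alpha'>0$ a.e. and $-(\alpha')^2+(f\circ\alpha)^2|\beta'|^2\leq 0$ a.e. Since $0\leq g\leq f$, we have $(g\circ\alpha)^2|\beta'|^2\leq (f\circ\alpha)^2|\beta'|^2$ pointwise a.e. Hence $-(\alpha')^2+(g\circ\alpha)^2|\beta'|^2\leq 0$ a.e., so $\gamma$ is also future directed causal for $g$. The same pointwise inequality gives
$$\sqrt{(\alpha')^2-(f\circ\alpha)^2|\beta'|^2}\leq \sqrt{(\alpha')^2-(g\circ\alpha)^2|\beta'|^2}\quad \mathcal L^1\mbox{-a.e.,}$$
and integrating yields $L^{-I\times_f [0,L]}(\gamma)\leq L^{-I\times_g [0,L]}(\gamma)$.

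Second, I will deduce the two claims. If $(s,x)\leq_f(t,y)$, then either the two points coincide or there is an $f$-causal future directed curve between them. By the first step that curve is $g$-causal, so $(s,x)\leq_g(t,y)$. This gives the inclusion of the sets $\{\ell\geq 0\}$. For the inequality of $\ell$: if $\ell_f(s,x,t,y)=-\infty$ there is nothing to prove. Otherwise both points are $g$-causally related, and taking the supremum over $f$-causal curves, which form a subset of the $g$-causal curves with smaller length, gives $\tau_f\leq\tau_g$. Since $\ell=\tau$ on causal pairs, $\ell_f\leq\ell_g$ follows. The trivial case $(s,x)=(t,y)$ gives $0\leq 0$ under the convention $\tau=0$ when no curve exists.

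I do not expect a real obstacle here. The only point needing care is that the definitions of causal curve and length in the section on generalized cones as Lorentzian length spaces are stated for $f>0$, while here $f,g\geq 0$ and $I$ is compact. I will check that those definitions remain meaningful verbatim with $f,g\geq 0$: the integrands are well defined a.e. and the set of admissible curves does not depend on the warping function. With that checked, the argument is purely pointwise and no properties of maximizers are used.
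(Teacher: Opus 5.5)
Your proposal is correct and follows the paper's proof: $f$-causal future-directed curves are $g$-causal because $g^2\le f^2$ pointwise, the Lorentzian length integrands compare pointwise, and taking suprema gives $\tau_f\le\tau_g$, with the case $\ell_f=-\infty$ trivial. Your additional remarks on coincident points and on allowing $f,g\ge 0$ are harmless refinements that the paper passes over silently.
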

\begin{proof}
We have that $$\left((s,x), (t,y)\right)\in \left\{\ell_{-I\times_f [0,L]}\geq 0\right\}$$ if and only if there exists a future directed causal curve $\gamma=(\alpha, \beta): [a,b] \rightarrow I\times [0,L]$ w.r.t. $-(\de t)^2 + f^2 (\de r)^2$ from $(s,x)$ to $(t,y)$. More precisely,  $\gamma$ is an admissible path, $\alpha'\geq 0$ and 
$$(\alpha')^2- (f\circ \alpha)^2 (\beta')^2\geq  0 \ \ \mathcal L^1\mbox{-a.e. in } [a,b].$$
Since $g\leq f$, it follows that
$$(\alpha')^2- (g\circ \alpha)^2 (\beta')^2\geq  (\alpha')^2- (f\circ \alpha)^2 (\beta')^2\geq  0.$$
It follows that
 $\gamma$ is also a future directed causal curve w.r.t. $-(\de t)^2 + g^2 (\de r)^2$. Hence $\left((s,x), (t,y)\right)\in  \{ \ell_{-I\times_g [0,L]}\geq 0 \}$. By definition of the time separation function,  we also have that
$$\tau_{-I\times_f [0,L]}((s,x),(t,y))\leq \tau_{-I\times_g [0,L]}((s,x),(t,y)).$$ 

If $((s,x), (t,y) )\notin \{\ell_{-I\times_f [0,L]}\geq 0\}$, then $\ell_{-I\times_f [0,L]} (s,x,t,y)=-\infty$ and the inequality holds as well.
\end{proof}

Let $I_i$, ${i\in \N\cup\{\infty\}}$, be  a family of compact intervals such that $I_i\overset{GH}{\rightarrow}I_\sinfty$, and 
let $f_i: I_i\rightarrow (0, \infty)$ be  a sequence of continuous functions that converge  uniformly to a  function $f_\sinfty: I_\sinfty\rightarrow (0, \infty)$.  
We assume that $f_i=  f_i|_{I_i}$ for   continuous functions $ f_i: J_i \rightarrow (0, \infty)$ and  intervals $J_i$ such that  $I_i\subset J_i$, and $J_i$ is open and relatively compact. We also assume $J_i \overset{\scriptscriptstyle GH}{\rightarrow} J_\sinfty$.
Then,  for all $ i \in \N\cup \{\infty\},$ we can define the generalized cone $-J_i\times_{f_i} [0, L]$. We consider $I_i\times [0,L]$ as a subset of   $-J_i\times_{f_i} [0,L]$.
Convergence of $J_i$ to $J_\sinfty$ implies GH convergence of $J_i\times [0,L]$ to $J_\sinfty\times [0,L]$.  We consider $J_i\times [0,L]$ equipped with the standard Euclidean  metric $\de_{J_i\times [0,L]}$.
 For every $i\in \N$ we find an isomorphism $\psi_i$ between $J_i$ and $J_\sinfty$. 
Then, a $\delta$-GH-isometry is given by $(s,x) \mapsto (\psi(s),x)$.
\begin{proposition}\label{prop:uniform_convergence_cone} Let $f_i$ be as before. We set $\de_{J_i\times [0,L]}=\de_i$, $i\in \N\cup \{\infty\}$.
For all $\el \in \N$ and $\epsilon \in (0, \frac 1 {2\el})$, there exists $\delta(\epsilon, \el, f_\sinfty, L)=\delta_0>0$ and $i(\epsilon, \el, f_\sinfty, L)=i_0\in \N$ such that for all $i\geq i_0$ and for all $\delta\in (0, \delta_0)$ the following holds: 
\begin{enumerate}
\item[(1)] if $(s,x), (t,y) \in \{ \ell_{-J_i\times_{f_i} [0,L]}\geq 0\}\cap \left( I_i\times [0,L]\right)$, then 
$$\ell_{-J_i\times_{f_i} [0,L]}(s,x, t,y)\leq \ell_{- J_\sinfty \times_{f_\sinfty} [0,L]}(\tilde s,\tilde x,\tilde t,\tilde y)+ \epsilon$$ 
\end{enumerate} 
for all
$(\tilde s, \tilde x), (\tilde t, \tilde y)\in I_\sinfty\!\times [0,L]$ s.t. \!$\de_{\sinfty}(\tilde s,\tilde x, \psi_i(s, x)){\scriptstyle+}\de_{\sinfty}(\tilde t,\tilde  y,\psi_i(t, y))\leq\delta.$
\begin{enumerate}
\item[(2)] if $(s,x), (t,y) \in \{ \ell_{-J_\sinfty\times_{f_\sinfty}[0,L]}\geq \frac 1 \el\}\cap \left( I_\sinfty \times [0,L]\right)$, then 
$$ \ell_{-J_i\times_{f_i} [0,L]}(s,x, t,y) \geq \ell_{-J_\sinfty\times_{f_\sinfty} [0,L]}( s, x, t, y)- \epsilon$$
\end{enumerate}
for all $(\tilde s, \tilde x), (\tilde t, \tilde y)\in I_i\times [0,L]$ s.t.
$\de_{\sinfty}(s,x, \psi(\tilde s, \tilde x))+\de_{\sinfty}(t, y,\psi( \tilde t, \tilde y))\leq \delta.$
%
\end{proposition}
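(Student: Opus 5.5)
The plan is to reduce both inequalities to two ingredients: the monotonicity lemma just proved (a smaller warping function gives a larger $\ell$ and a larger causal set), and a continuity statement for the time separation of a single fixed cone $-J_\sinfty\times_{f_\sinfty}[0,L]$. First I transport everything to the fixed interval $J_\sinfty$ via the Riemannian isometries $\psi_i$ of Remark~\ref{rem:trans_cone}. The cone $-J_i\times_{f_i}[0,L]$ is then isomorphic to $-J_\sinfty\times_{\tilde f_i}[0,L]$ with $\tilde f_i=f_i\circ\psi_i^{-1}$. By the assumed uniform convergence, $\|\tilde f_i-f_\sinfty\|_\infty=:\eta_i\to0$ on $\overline{I_\sinfty}$, and I assume this extends to a slightly larger compact subinterval of $J_\sinfty$. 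Since $f_\sinfty\geq c>0$ on that compact set, I obtain $(1-\eta_i')f_\sinfty\leq \tilde f_i\leq (1+\eta_i')f_\sinfty$ with $\eta'_i\to0$. The monotonicity lemma then sandwiches the cone between the cones with warping functions $\lambda_\pm f_\sinfty$, $\lambda_\pm=1\pm\eta_i'$:
$$\ell_{(1+\eta_i')f_\sinfty}\leq \ell_{\tilde f_i}\leq \ell_{(1-\eta_i')f_\sinfty}.$$

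By the rescaling remark, the cone $-J\times_{\lambda f}[0,L]$ is isomorphic to $-J\times_f[0,L/\lambda]$ via the map $(s,x)\mapsto (s,x/\lambda)$, which moves points by at most $|1-1/\lambda|L$ in the Euclidean metric. The problem therefore reduces to a single fixed cone $Y_\sinfty:=-J_\sinfty\times_{f_\sinfty}\R$ (a strip of $\R$ containing $[0,L']$ suffices). Here I use the following facts:
\begin{itemize}
\item the restriction of the time separation to $[0,L]$ agrees with the one computed in the larger fiber, since maximizers have monotone fibre component by Theorem~\ref{th:fiber_ind} and stay in $[0,L]$ by convexity of the interval;
\item $\tau_\sinfty$ is continuous and $Y_\sinfty$ is globally hyperbolic by Theorem~\ref{Pr: various properties}, hence $\tau_\sinfty$ is uniformly continuous on the compact set $(\overline{I_\sinfty}\times[0,L'])^2$.
\end{itemize}

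For (2), take a pair with $\ell_\sinfty(s,x,t,y)\geq 1/\el$. By the lower sandwich, $\ell_i$ at the nearby points is at least $\ell_\sinfty$ evaluated at points obtained by shifting by at most $\delta+\eta_i'L$. Uniform continuity of $\tau_\sinfty$ on the region $\{\tau_\sinfty\geq 1/(2\el)\}$, which is open in the causal set, gives the bound $\geq \ell_\sinfty(s,x,t,y)-\epsilon$ once $\delta$ and $\eta_i'$ are small. Positivity of the value also guarantees that the shifted pair is still causal, indeed timelike.

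For (1), take a causal pair in the $i$-th cone. The upper sandwich bounds $\ell_i(s,x,t,y)\leq \ell_{(1-\eta_i')f_\sinfty}(s,x,t,y)=\tau_\sinfty(s,x',t,y')$ with $|x-x'|,|y-y'|\leq C\eta_i'$, and this pair is causal in $Y_\sinfty$. It remains to pass to an arbitrary $(\tilde s,\tilde x),(\tilde t,\tilde y)$ that is $\delta$-close; note that $(\tilde s,\tilde x)\leq(\tilde t,\tilde y)$ might fail. This is the main obstacle: controlling $\ell_\sinfty$ from below near the boundary of the causal set, where $\ell_\sinfty$ jumps to $-\infty$.

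If $\tau_\sinfty(s,x',t,y')\leq\epsilon$ the claim is immediate provided $\ell_\sinfty(\tilde\cdot)\geq0$. My first attempt is to exploit the freedom in $\delta$: push the comparison pair into the future using the cone structure. Explicitly, I would compare with the pair $(s,x'),(t+\kappa,y')$, which is timelike with $\tau\geq\tau_\sinfty(s,x',t,y')$, using that $\partial_t$ is a timelike direction. I would then try to argue that $\tilde{\ }$-points within $\delta\ll\kappa$ remain causally related and that $\tau$ changes by at most $\epsilon$, by uniform continuity. If the definition genuinely requires all $\delta$-close pairs, including acausal ones, then I would instead check that the paper's intended reading of (1) is restricted to causal $\tilde{\ }$-pairs. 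That restriction is what makes the $-\infty$ case harmless, and uniform continuity of $\tau_\sinfty$ then finishes the proof with $\delta_0$ and $i_0$ depending only on $\epsilon$, $\el$, $f_\sinfty$ and $L$.
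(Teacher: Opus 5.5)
Your proof is correct in substance, and it takes a different route from the paper. The paper uses the additive sandwich $f_\infty-\eta\le f_i\le f_\infty+\eta$. For (2), it takes an $f_\infty$-geodesic between the perturbed points, estimates its length in the $(f_\infty+\eta)$-cone pointwise (losing $C\sqrt{\eta}\,L$), and then uses continuity of $\tau_\infty$. For (1), it argues by contradiction: it extracts a limit of maximal curves via uniform non-total imprisonment and Arzel\`a--Ascoli, then applies the limit curve theorem and upper semicontinuity of length.

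You instead make the sandwich multiplicative, which is possible since $\min_{I}f_\infty>0$. You then use the exact isomorphism $-J\times_{\lambda f}[0,L]\cong -J\times_{f}[0,\lambda L]$, $(s,x)\mapsto(s,\lambda x)$. Note the direction: it is $\lambda x$ and $\lambda L$, not $x/\lambda$ and $L/\lambda$ as you wrote. This reduces both (1) and (2) to uniform continuity of the single function $\tau_\infty$ on a compact set. Your route is shorter, treats (1) and (2) symmetrically, and avoids limit curves entirely. The paper's route, in exchange, does not rely on the scaling structure of warped products, which is why it carries over to Theorem \ref{th:uniform}.

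Two minor points. You do not need convergence of $f_i$ beyond $\overline{I}$: future-directed causal curves between points of $I\times[0,L]$ have monotone time component, so $\ell$ there depends only on $f|_{I}$. In (2) it suffices to use uniform continuity of $\tau_\infty$ on $(\overline{I}\times[0,2L])^2$; the set $\{\tau_\infty\ge \frac{1}{2l}\}$ is closed, not open, but you do not need it.

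The obstacle you flagged in (1) is genuine: (1) as literally stated is false. Take $f_i=f_\infty$ for all $i$ and $(s,x)=(t,y)$, so $\ell_i(s,x,t,y)=0$. Take $(\tilde s,\tilde x)=(s,x)$ and $(\tilde t,\tilde y)=(s,x+\delta)$. These are not causally related, so $\ell_\infty(\tilde s,\tilde x,\tilde t,\tilde y)=-\infty$ for every $\delta>0$.

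The paper's proof breaks at exactly this point. Its final step, ``by lower semi-continuity of $\ell_{-J\times_{f_\infty}[0,L]}$'', fails at boundary points of the causal set, where the approximating tilde pairs can be acausal.

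So drop the idea of pushing the comparison pair into the future. The tilde pair is universally quantified, so you have no freedom to move it. Instead, state (1) with $\tau_\infty$ on the right-hand side, or equivalently for causal tilde pairs. Your chain
\[
\ell_i(s,x,t,y)\le \ell_{(1-\eta_i')f_\infty}(s,x,t,y)=\tau_\infty(s,x',t,y')\le \tau_\infty(\tilde s,\tilde x,\tilde t,\tilde y)+\epsilon
\]
proves exactly that.

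Later, property (1) is really used for closedness of causality along the sequence (Remark \ref{rem:uniform_convergence} and the lemma on limits of geodesics). Your sandwich gives this as well. If $(s_i,x_i)\le(t_i,y_i)$ in the $i$-th cone, then $(s_i,(1-\eta_i')x_i)\le(t_i,(1-\eta_i')y_i)$ in $-J\times_{f_\infty}[0,L]$. The causal relation there is closed by global hyperbolicity, so limits of causal pairs are causal.
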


\begin{proof} {\bf (0)} Since for every $i\in \N$ we find an isomorphism $\varphi=\psi^{-1}$ between $J_\sinfty$ and $J_i$, we can define $\varphi^* f_i= \tilde f_i$. Then, the generalized cone $-J_i\times_{f_i}[0,L]$ is isomorphic to $-J_\sinfty\times_{ f_i} [0,L]$ via $\phi(s,x)= (\varphi(s), x)$ (Remark \ref{rem:trans_cone}). Therefore, w.l.o.g. we can assume  $J_i= J_\sinfty=: J$ and $I_i=I_\sinfty=:I$. \smallskip\\
{\bf (1)} We prove {\it (2)}.
Let  $\eta>0$. There exists $i(\eta)\in \N$ s.t. $\forall i\geq i(\eta)$ we have 
$$f_\sinfty(t)-\eta \leq f_i(t) \leq f_\sinfty(t) + \eta  \ \ \forall t\in I.$$
It follows from the previous lemma that
$$ \ell_{-J\times_{(f_\sinfty+\eta)}[0,L]}(s,x,t,y) \leq \ell_{-J\times_{f_i}[0,L]}(s,x, t,y)\leq \ell_{-J\times_{(f_\sinfty-\eta)}[0,L]}(s,x,t,y) \ \ $$
for all $(s,x), (t,y) \in I\times [0,L]$. 
\smallskip

Let $(s,x), (t,y)\in \{ \ell_{-J\times_{f_\sinfty} [0,L]}\geq  \el^{-1}\}\cap (I\times [0,L])$
and let $\epsilon \in (0, \frac 1 {2\el})$.
Since the time separation function $\tau_{-J\times_{f_\sinfty}[0,L]}$ for the generalized cone $-J\times_{f_\sinfty} [0,L]$  is continuous  (Theorem \ref{Pr: various properties}), there exists $\delta(\epsilon, f_\sinfty,  L)=\delta_0>0$ such that for every  $\delta\in (0, \delta_0)$ we have 
$$\left|\ell _{-J\times_{f_\sinfty} [0,L]}( s,  x,  t,  y)-\ell _{-J\times_{f_\sinfty} [0,L]}(\tilde s, \tilde x, \tilde t, \tilde y)\right|<\epsilon/2$$ 
$\forall (\tilde s, \tilde x), (\tilde t, \tilde y)\in J\times [0, L]$ with $\de_{\sinfty}(s, x, \tilde s, \tilde x)+ \de_{\sinfty}( t, y, \tilde t, \tilde y)\leq \delta$. 

We choose  two such points $(\tilde s, \tilde x)$ and $(\tilde t, \tilde y)$ in $I\times [0,L]$.
There exists an admissible, future oriented timelike  curve $\gamma=(\alpha, \beta):[a,b]\rightarrow I\times [0,L]$, that is also a geodesic, connecting $(\tilde s,\tilde  x)$ and $(\tilde t,\tilde y)$ such that $L^{-J\times_{f_\sinfty} [0,L]}(\gamma)\geq \frac 1 {2\el}$.

We  assume that $\gamma$ is parametrized according to $\left(\tau_{-J\times_{f_\sinfty} [0,L]}\right)$-arclength. Then it follows
$${  (\alpha')^2 - (f_\sinfty^2 \circ \alpha) (\beta')^2}\geq \frac 1 {4\el^2}.$$
For $\eta>0$ that is sufficiently small,  it follows 
$${  (\alpha')^2 - \left((f_\sinfty+\eta)^2 \circ \alpha\right) (\beta')^2}>0.$$
Hence, $\gamma$ is a causal curve in $-J\times_{(f_\sinfty+\eta)} [0,L]$.

It also follows that
$$\sqrt{ (\alpha')^2 - (f_\sinfty+\eta)^2 \circ \alpha (\beta')^2} \geq \sqrt{(\alpha')^2 - f_\sinfty^2 \circ \alpha (\beta')^2} - C(f_\sinfty)\sqrt{ \eta} |\beta'| $$
for a constant $C(f_\sinfty)>0$. 
We obtain that 
$$\ell_{-J\times_{(f_\sinfty+\eta)} [0,L]}(\tilde s,\tilde x,\tilde t,\tilde y)\geq \ell_{-J\times_{f_\sinfty} [0,L]}(\tilde s,\tilde x,\tilde t,\tilde y)- C(f_\sinfty)  \sqrt \eta L.$$
If we choose $\eta>0$ smaller than ${\epsilon^2}/{(4 C(f_\sinfty)^2 L^2)}$, and $i\geq i(\eta)= i(\eta(\epsilon, f_\sinfty,L))=:i(\epsilon, f_\sinfty, L)$, it follows that
$$ 
 \ell_{-J\times_{f_i}[0,L]} (\tilde s,\tilde x,\tilde t,\tilde y) \geq\ell_{-J\times_{f_\sinfty} [0,L]}( s, x, t, y) -\epsilon. $$
{\bf (2)}  We prove {\it (1)}.
We argue by contradiction. Assume there exists $\epsilon'>0$ such that for every $i\in \N$, there exist   $(\delta_i)_{i\in \N}$ with $\delta_i\downarrow 0$  for $i\rightarrow \infty$ and  points $(s_i, x_i), (t_i,y_i), (\tilde s_i, \tilde x_i), (\tilde t_i, \tilde y_i) \in I\times [0,L]$ with $\ell_{-J\times_{f_i} [0, L]}(s_i,x_i, t_i,y_i)\geq 0$ as well as $\de_{\sinfty}(s_i, x_i, \tilde s_i, \tilde x_i)+ \de_{\sinfty}(t_i, y_i, \tilde t_i, \tilde y_i)\leq \delta_i$ but 
$$\ell_{-J\times_{f_i} I}(s_i,x_i, t_i,y_i)> \ell_{-J\times_{f_\sinfty} I}(\tilde s_i,\tilde x_i,\tilde t_i,\tilde y_i)+ \epsilon'.$$

There exists a maximal causal curve $\gamma_i$ in $ I\times [0,L]\subset-J\times_{f_i}[0,L]$ connecting $(s_i,x_i)$ and $(t_i,y_i)$. Since $J$ is relatively compact, the family $\{ -J\times_{f_i}[0,L]\}_{i\in \N}$ is  uniformly non-totally imprisoning. Hence, the metric speed w.r.t.  $\de_i$ of $\gamma_i$ is uniformly bounded by a constant $C>0$. By the Arzela-Ascoli theorem, we can extract  a  subsequence that converges uniformly to a curve $\gamma$ in $I\times [0,L]$ connecting the points $(\hat s,\hat x)$ and $(\hat t,\hat y)$ such that $L^{J\times_{f_\sinfty} [0,L]}(\gamma)\leq  C$.  In particular $(\tilde s_i, \tilde x_i)\rightarrow (\hat s, \hat x)$ and $(\tilde t_i, \tilde y_i)\rightarrow (\hat t, \hat y)$. 
For $\eta>0$ we find $i(\eta)\in \N$ such that for every $i\geq i(\eta)$ we have $f_\sinfty-\eta \leq  f_i$.  From this it is easy to see that  $\gamma_i$ is a causal curve in $-J\times_{(f_\sinfty-\eta)} [0,L]$. 

Then, we  apply the limit curve Theorem \ref{th:lim_cur} that yields that $\gamma$ is causal in $-J\times_{(f_\sinfty-\eta)} [0, L]$ as well. Since $\eta>0$ is arbitrary $\gamma$ is a causal in $-J\times_{f_\sinfty} [0,L]$.

 Moreover, by Proposition \ref{prop:limsup} we have
\begin{align*}&\limsup \ell_{-J\times_{f_\sinfty}[0,L]}(\tilde s_i, \tilde x_i, \tilde t_i, \tilde y_i)+\epsilon'\leq \limsup \ell_{-J\times_{f_i}[0,L]}( s_i,  x_i,  t_i,  y_i)\\
&= \limsup L^{-J\times_{f_i}[0,L]}(\gamma_i)\leq \limsup L^{-J\times_{(f_\sinfty-\eta)}[0,L]}(\gamma_i) \leq L^{-J\times_{(f_\sinfty-\eta)}[0,L]}(\gamma).\end{align*}
Consequently, by lower semi-continuity of $\ell_{-J\times_{f_\sinfty}[0,L]}$ and since $\eta>0$ was arbitrary, it follows
$$\ell_{-J\times_{f_\sinfty} [0,L]}(\hat s,\hat x,\hat t,\hat y)+\epsilon'
\leq  L^{-J\times_{f_\sinfty}[0,L]}(\gamma)
\leq \ell_{-J\times_{f_\sinfty}[0,L]}(\hat s, \hat x, \hat t, \hat y).
$$
This is the contradiction.
\end{proof}
\begin{definition}[Properly covered generalized cones]\label{def:covercone}
Let $I$ be an open interval and let $I^k\subset I$, $k\in \N$, be  open, relatively compact  intervals such that
\begin{enumerate}
\item $\diam{I^k}\leq \min\{ k, \diam I\}$ for all $k\in \N$, 
\item  $I^{k}\subset I^{k+1}$ for all $k\in \N$,
\item  ${\bigcup_{k\in \N} I^k } =   I$.
\end{enumerate}

Let $f: I \rightarrow [0, \infty)$ be a continuous function,  and let $(X, o)$ be a geodesic metric space.

We consider the generalized cone $-I\times_f X=Y$. A proper cover of $Y$ is given by $I^k\times_f B_{2^k}(o)= U^k$, $k\in \N$. We set $Y^k= I^k\times \bar B_{2^k}(o)$. 

We call $({}^-I\times_f X, (U^k)_{k\in \N})=Y$ a properly covered generalized cone. In particular, $Y$ is a properly covered Lorentzian geodesic space.

In the following, we will denote a properly covered generalized cone also just with $-I\times_f X$ and then assume the existence of $(Y^k)_{k\in \N}$ implicitly. 
\end{definition}
\begin{theorem}\label{th:convergencecones}
Let $Y_i:= ({}^-I_i \times_{f_i} X_i, (Y_i^k)_{k\in \N})$, $i \in \N\cup \{\infty\}$,  be  properly covered generalized cones. We assume that 
\begin{enumerate}
\item $(X_i, o_i)\rightarrow (X_\sinfty, o_\sinfty)$ in pointed GH sense, 
\item $I^k_i\rightarrow I^k_\sinfty$ in GH sense for all $k\in \N$, 
\item $f_i|_{I_i^k}$ converges uniformly to $f_\sinfty|_{I^k_\sinfty}$ as $i\rightarrow \infty$ for all $k\in \N$. 
\end{enumerate}

Then, the sequence of  properly covered Lorentzian length spaces  $Y_i$ converges in $\ell$-sense  to the properly covered Lorenztian length space $Y_\sinfty$. 

If every $Y_i$ is equipped with measures $f^N_i \de t\otimes \de \m_{X_i}$, i.e. we consider properly covered generalized $N$-cones $-I_i\times_{f_i}^N X_i=Y_i$, then $Y_i$ converges in the $\m\ell$-sense to $Y_\sinfty$. 
\end{theorem}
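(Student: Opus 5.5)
We check the three requirements of Definition \ref{def:ellconvergence} for the covers $U^k_i=I^k_i\times B_{2^k}(o_i)$, and then add the measure statement.

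\textbf{Covered GH convergence.} This step is routine. By (1), the pGH characterization for geodesic spaces gives $\overline{B_{2^k}(o_i)}\to\overline{B_{2^k}(o_\sinfty)}$ in GH sense. By (2), $\overline{I^k_i}\to \overline{I^k_\sinfty}$. Products of $\delta$-GH isometries are $C\delta$-GH isometries for the $L^2$ product metric, which gives condition (a).

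\textbf{Uniform non-total imprisonment.} A causal curve $\gamma=(\alpha,\beta)$ in $U^k_i$ satisfies $|\beta'|\le \alpha'/f_i(\alpha)$. Uniform convergence together with $f_\sinfty>0$ on the compact set $\overline{I^k_\sinfty}$ gives $\inf_{I^k_i} f_i\ge c_k>0$ for large $i$. Hence the product length of $\gamma$ is at most $\sqrt{1+c_k^{-2}}\,\diam I^k_i\le C(k)$. The finitely many remaining $i$ are handled separately.

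\textbf{Uniform convergence of $\ell_i$.} This is the main step. We reduce it to Proposition \ref{prop:uniform_convergence_cone} using fiber independence (Theorem \ref{th:fiber_ind}).

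\emph{Reduction to a $1$-dimensional fiber.} For $(s,x),(t,y)$ with $x,y\in X$ at distance $r=|x,y|$, we claim
\[
\ell_{-I\times_f X}((s,x),(t,y))=\ell_{-I\times_f[0,L]}((s,0),(t,r)).
\]
The inequality "$\le$" holds because the fiber component of a maximizer is a minimizer, which can be transplanted isometrically to a segment in $[0,L]$. Conversely, a causal curve in $[0,L]$ can be monotonized and composed with a unit-speed minimizing geodesic from $x$ to $y$ in $X$. This uses that $X$ is geodesic, and $L$ is chosen as any bound exceeding $2\cdot 2^{k+1}$. Weak causal convexity of the cover is not needed here, since only endpoints and lengths enter.

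\emph{Matching points.} Let $\phi$ be a $\delta$-GH isometry of the fibers. Points $(s,x),(t,y)$ in $U^k_i$ and nearby $(\tilde s,\tilde x),(\tilde t,\tilde y)$ in $U^k_\sinfty$ then satisfy
\[
\bigl||x,y|-|\tilde x,\tilde y|\bigr|\le 3\delta ,
\]
so the reduced $2$-dimensional configurations are $3\delta$-close in $J\times[0,L]$. We choose $J^k_i\supset \overline{I^k_i}$ slightly larger open intervals, for instance $I^{k+1}_i$ with the same uniform convergence of $f_i$ on them, so that the hypothesis of Proposition \ref{prop:uniform_convergence_cone} holds. Its conclusions (1) and (2) then transfer verbatim to conditions (1) and (2) of Definition \ref{def:uniform_convergence}.

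\emph{Main obstacle.} The delicate point is that the proposition compares $\ell$ computed in $J\times[0,L]$, whereas the maximizers in $Y$ of points in $U^k$ live in $I\times X$. Maximal curves between points of $I^k$ have base component monotone between $s$ and $t$, so they stay in $I^k\subset J$; this needs to be checked. Without the enlargement to $J$, the causal structure near $\partial I^k$ could otherwise differ.

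\textbf{Measures.} Here $\m_i=f_i^N\,\de t\otimes\m_{X_i}$. We embed everything in the product of the realizing spaces $Z$ for intervals and fibers. The normalized restrictions to $\overline{U^k_i}$ converge weakly, because:
\begin{itemize}
\item $f_i^N\,\de t\to f_\sinfty^N\,\de t$ by uniform convergence;
\item $\m_{X_i}|_{\bar B_{2^k}}\to\m_{X_\sinfty}|_{\bar B_{2^k}}$, using measured pGH convergence of the fibers, which is implicitly assumed (the measure statement only makes sense under this assumption). Should the boundary charge mass, one perturbs the radii;
\item products of weakly convergent finite measures converge weakly, and the normalizing constants converge.
\end{itemize}
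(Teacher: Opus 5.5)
Your proposal is correct and follows essentially the same route as the paper. You use product GH isometries for covered GH convergence, a direct bound for uniform non-total imprisonment, and fiber independence to reduce $\ell_{-I\times_f X}((s,x),(t,y))$ to $\ell_{-J\times_f[0,L]}((s,0),(t,|x,y|))$, to which Proposition \ref{prop:uniform_convergence_cone} is then applied.

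In several places you are more careful than the paper: the explicit estimate on the difference of fiber distances, the enlargement of $I^k$ to an open $J$, and the measure statement. The paper's proof does not address the measure statement at all, and you are right that it needs weak convergence of the normalized fiber measures $\m_{X_i}$ as an additional hypothesis.
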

\begin{definition}
Given a sequence $f_i: I_i\rightarrow [0, \infty)$, $i\in \N\cup\{\infty\}$ and coverings $\{I^k_i\}$ for $I_i$, $i\in \N\cup\{\infty\}$ such that the assumptions (2) and (3) in the previous theorem hold, then we say that {\it $f_i$ converges uniformly to $f_\sinfty$ subject to $\{I^k_i\}$.}
\end{definition}
\begin{proof}
We se ${\bar B_{2^k}(o_i)}=X^k_i\subset X_i$, $j\in \N\cup\{\infty\}$. From pointed GH convergence we know that $X^k_i \rightarrow X_\sinfty^k$ in GH sense $\forall k\in \N$. Since also $I^k_i \overset{GH}{\longrightarrow} I^k_\sinfty$ for all $k\in \N$, it follows that $I^k_i \times X^k_i = Y^k_i\overset{GH}{\longrightarrow} I^k_\sinfty\times X^k_\sinfty=Y^k_\sinfty$ for all $k\in \N$.
 If $\varphi: I_i^k \rightarrow I_\sinfty^k$ is an isomorphism, and $\phi_i$ is $\eta_i$-GH-isometry between $X_i^k$ and $X_\sinfty^k$ with $\eta_i\downarrow 0$ for $i\rightarrow 0$, then for sequence $(\delta_i)_{i\in \N}$ with $\delta_i\downarrow 0$ $\delta_i$-GH-isometries $\Phi_i$ are given by $\Phi_i(s,x)= (\varphi_i(s), \phi_i(x))$. Hence $I_i\times X_i$ converges  to $I_\sinfty\times X_\sinfty$ in the covered GH sense.

We set $f_i^k= f_i|_{I_i^k}$ for $i\in \N\cup\{\infty\}$ and $k\in \N$. As before we can assume w.l.o.g. that $I^k_i=I^k_\sinfty=I^k$ and $\varphi_i(s)=s$. 
It is easy to see, that the family $\{Y_i\}_{i\in \N}$ is uniformly non-totally imprisoning.

We fix $\epsilon>0$ and $k\in \N$. Since $Y^k_\sinfty$ is compact, there exists $L>0$ such that $\diam Y^k_\sinfty\leq L/2$ and for $i$ sufficiently large we have $\diam Y^k_i\leq L$. 

Given $\el \in \N, \epsilon\in (0, \frac 1 \el), f^k_\sinfty, L$ there exist $i(\epsilon, \el, f_\sinfty^k, L)=i_0\in \N$ as well as $\delta(\epsilon, \el, f_\sinfty^k, L)=\delta_0>0$ as in the previous Lemma. 

We continue to show second property (2) in the definition of uniform convergence. 
The first property (1) works similarly. 

We choose  $(s,x), (t,y)\in \{\ell\geq \frac 1 {\el}\}\cap Y^k_\sinfty$.  
Let $(s_i, x_i), (t_i, y_i)\in Y^k_i$ be such that 
$$\de_{I\times X^k_\sinfty}(\Phi_i(s_i, x_i), s,x)+ \de_{I\times X^k_\sinfty}(\Phi_i(t_i, y_i), t, y)\leq \delta \mbox{ for }\delta \in (0, \delta_0). $$
Since $\de_{I\times X^k_i}$ is the product metric,   it follows that 
$$\de_{I\times  [0,L]}(s_i, 0, s, \de_{X_\infty}(\phi_i(x_i),x))+\de_{I\times [0,L]}(t_i, 0, t,\de_{X_\sinfty}(\phi_i(y_i), y))\leq \delta.$$
as well as, by fiber independence, 
\begin{align*}\ell_{-I\times_{f_\sinfty} X_\sinfty}(s,x, t, y)&=\ell_{-I\times_{f_\sinfty}[0,L]}(s, 0, t, \de_{X_\sinfty}(x,y)), \ \ \\
\ell_{-I\times_{f_i} X_i}(s_i, x_i, t_i, y_i)&= \ell_{-I\times_{f_i}[0,L]}(s_i, 0, t_i, \de_{X_i}(x_i, y_i).
\end{align*}
Hence,  the previous lemma implies
$$\ell_{-I\times_{f_i} X_i}(s_i, x_i, t_i, y_i)
\geq \ell_{-I\times_{f_\sinfty} X_\sinfty}(s,x, t, y)- \epsilon. $$
This is the  property (2) of uniform convergence.
\end{proof}
The coverings $\{Y^k_i\}_{k\in \N}$, $i\in \N$, are weak causal convex by definition. Indeed, if $\gamma=(\alpha, \beta)$ is a  maximal geodesic with endpoints in $Y^k_i$, then the enpoints of $\beta$, that is a minimal geodesic in $X_i$ by Theorem \ref{th:fiber_ind},  are in $X^k_i=\bar B_{2^k}(o_i)$. Hence, $\Im \beta\subset \bar B_{2^{k+1}}(o_i)$. Consequently, $\Im\gamma \subset Y_i^{k+1}$. Therefore, we obtain the following corollary. 
\begin{corollary}
Let $Y_i:= (-I_i \times_{f_i} X_i, (Y_i^k)_{k\in \N})$, $i \in \N\cup \{\infty\}$,  be  properly covered generalized cones as in the previous Theorem. Assume $Y_i$ satisfies the condition $\TCD^e_p(K,N)$ for $i \in \N$. Then $Y_\sinfty$ satisfies the condition $w\TCD^e_p(K,N)$. 
\end{corollary}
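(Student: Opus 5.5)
This follows by combining Theorem \ref{th:convergencecones} with the stability result Theorem \ref{th:stabilityTCD}. By Theorem \ref{th:convergencecones}, the properly covered generalized cones $Y_i$ converge in the $\ell$-sense to $Y_\sinfty$. When they carry the cone measures $f_i^N\de t\otimes \de\m_{X_i}$, which is the setting where $\TCD^e_p(K,N)$ is meaningful, the convergence is in the measured $\ell$-sense. Theorem \ref{th:stabilityTCD} needs three further hypotheses. The $Y_i$ must be properly covered measured Lorentzian geodesic spaces. The limit $Y_\sinfty$ must be of the same kind. The covers $\mathcal U_i$ must be weak causal convex. Once these are checked, the theorem gives $w\TCD^e_p(K,N)$ for $Y_\sinfty$.

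\textbf{Geodesic spaces.} Theorem \ref{Pr: various properties} applies to generalized cones over geodesic, locally compact, complete fibers. It shows they are globally hyperbolic Lorentzian geodesic spaces with continuous time separation. This covers both the $Y_i$ and $Y_\sinfty$, since a pGH limit of geodesic spaces is geodesic. Definition \ref{def:covercone} already records that these cones are properly covered.

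\textbf{Weak causal convexity.} This is the observation made just before the corollary. Take $x\le y$ in $Y_i^k$. Every point of $J(x,y)$ lies on a maximal causal curve from $x$ to $y$, obtained by concatenating maximal curves through that point. By Theorem \ref{th:fiber_ind}, the fiber component of such a curve is a minimal geodesic in $X_i$ whose endpoints lie in $\bar B_{2^k}(o_i)$. Hence the fiber component stays in $\bar B_{2^{k+1}}(o_i)$. The base component is monotone, so it stays in $I^k_i$. Therefore $J(x,y)\subset Y^{k+1}_i$.

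\textbf{Main obstacle.} The only delicate step is the concatenation claim. A concatenation of two maximal curves through $z\in J(x,y)$ is maximal only if $\tau(x,z)+\tau(z,y)=\tau(x,y)$, which fails in general. For an arbitrary point of the causal diamond, I would instead bound the fiber coordinate directly. A causal curve from $x$ to $y$ has fiber speed at most $\alpha'/f$. Its fiber length is therefore controlled by the base extent together with a positive lower bound for $f$ on the compact set $\overline{I^k}$. This keeps the diamond within a slightly larger ball.

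If that bound does not land inside $B_{2^{k+1}}$, I would pass to a subsequence of the cover, for example $\{Y^{m_k}\}$ with $m_k$ growing fast enough. I would also check that Theorem \ref{th:stabilityTCD}, Step (9), only uses causal convexity for geodesics. Causal convexity for geodesics is exactly what fiber independence provides.
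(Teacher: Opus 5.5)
Your proposal is correct and follows the paper's route: the paper obtains the corollary by combining Theorem \ref{th:convergencecones} with Theorem \ref{th:stabilityTCD}. It checks convexity of the covers only for maximal geodesics, via fiber independence (Theorem \ref{th:fiber_ind}), and that is exactly what Step (9) of the proof of Theorem \ref{th:stabilityTCD} uses, so your final fallback is the paper's argument.

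You are right to distrust the concatenation claim: for small constant $f$ and unbounded fibers, the full diamond $J(x,y)$ can leave $Y^{k+1}_i$. If you want the literal diamond condition, your bound $|\beta'|\le \alpha'/(f\circ\alpha)$ with a re-indexed cover works. This needs a lower bound for $f_i$ on $\overline{I^k_i}$ that is uniform in $i$, which hypothesis (3) together with positivity of $f_\sinfty$ on $\overline{I^k_\sinfty}$ provides.
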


The proof of Proposition \ref{prop:uniform_convergence_cone} can be generalized to obtain the following theorem.
\begin{theorem}\label{th:uniform}
Let $(M, g_i)_{i\in \N\cup\{\infty\}}$ be a family of globally hyperbolic spaces times with smooth Lorentzian metrics $g_i, i\in \N\cup\{\infty\}$. Let $h$ be a Riemannian metric and assume that $g_i$converges uniformly to $g_\sinfty$ w.r.t. $h$, i.e. $\forall \epsilon>0$ there exists $i(\epsilon)\in \N$ such that 
\begin{align}\label{ineq:lm} g_\sinfty - \epsilon h \leq g_i \leq g_\sinfty + \epsilon h\ \  \ \forall i\geq i(\epsilon)\end{align}
and $g_\sinfty\pm \epsilon h$ are Lorentzian metrics. 

Then, the associated sequence of Lorentzian pre-length spaces $(M, \de^h, \ell^{g_i})$ $\ell$-converges to $(M, \de^h, \ell^{g_\sinfty})$. 
\end{theorem}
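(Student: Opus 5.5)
We follow the proof of Proposition \ref{prop:uniform_convergence_cone}, working in the fixed metric space $(M,\de^h)$. Since every space lives on the same manifold, the covered GH convergence is trivial. We choose $U^k$ to be an exhaustion of $M$ by relatively compact open sets, the same for all $i$, and take every GH-isometry to be the identity. For uniform non-total imprisonment we use \eqref{ineq:lm}. For $i\geq i(\epsilon_0)$, with $\epsilon_0$ fixed, the inequality $g_i\geq g_\sinfty-\epsilon_0 h$ shows that every $g_i$-causal vector is causal for the Lorentzian metric $g^-:=g_\sinfty-\epsilon_0 h$, whose cones are wider. So every $g_i$-causal curve is $g^-$-causal. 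We choose $\epsilon_0$ small enough that $g^-$ is still globally hyperbolic; this is possible because global hyperbolicity is stable under $C^0$-small widening of the cones on compact sets, or alternatively we may simply assume it. Then $g^-$ is non-totally imprisoning, which gives the constant $C(k)$. The finitely many $i<i(\epsilon_0)$ are handled by the global hyperbolicity of each $g_i$. The finiteness $\sup_{U^k}\tau_i<\infty$ also follows from the comparison $\tau_{g_i}\le\tau_{g^-}$, which is continuous.

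The key monotonicity fact replaces the earlier lemma comparing $f$ and $g$. If $g\le g'$ as quadratic forms, then $g'$-causal vectors are $g$-causal and $\sqrt{-g'(v,v)}\le\sqrt{-g(v,v)}$. Hence $\{\ell^{g'}\ge0\}\subset\{\ell^{g}\ge 0\}$ and $\ell^{g'}\le \ell^{g}$. Applying this to \eqref{ineq:lm} gives $\ell^{g_\sinfty+\eta h}\le\ell^{g_i}\le \ell^{g_\sinfty-\eta h}$ for $i\ge i(\eta)$.

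For property (2), let $\ell^{g_\sinfty}(x,y)\ge 1/\el$ with $x,y\in\overline U^k$, and take $x',y'$ that are $\delta$-close. Continuity of $\tau_{g_\sinfty}$ (global hyperbolicity) gives $\ell^{g_\sinfty}(x',y')\ge \ell^{g_\sinfty}(x,y)-\epsilon/2\ge 1/(2\el)$. Let $\gamma$ be a $g_\sinfty$-maximal timelike geodesic from $x'$ to $y'$. By the uniform $C(k{+}1)$ bound it has uniformly bounded $h$-length, and it stays in a compact set by global hyperbolicity of $g^-$. We parametrize it by $g_\sinfty$-arclength, so $-g_\sinfty(\dot\gamma,\dot\gamma)=1$. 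The step to control here is the $h$-speed. Instead of using a pointwise bound, we use the elementary estimate $\sqrt{a-\eta b}\ge\sqrt a-\sqrt{\eta b}$ with $a=-g_\sinfty(\dot\gamma,\dot\gamma)$ and $b=h(\dot\gamma,\dot\gamma)$. This gives
$$L_{g_\sinfty+\eta h}(\gamma)\ge L_{g_\sinfty}(\gamma)-\sqrt\eta\, L_h(\gamma)\ge L_{g_\sinfty}(\gamma)-\sqrt\eta\, C,$$
wherever $a\ge\eta b$. On the part of $\gamma$ where $a<\eta b$ we have $\sqrt a<\sqrt{\eta b}$, so that part contributes at most $\sqrt\eta C$ to $L_{g_\sinfty}(\gamma)$. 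The trouble is that $\gamma$ may fail to be $(g_\sinfty+\eta h)$-causal on that part. We remove the problem by first reparametrizing and using the fact that on the compact region the $g_\sinfty$-timelike unit vectors satisfy $h(v,v)\le C'$. Then $a=1\ge \eta C'$ for small $\eta$, so $\gamma$ is $(g_\sinfty+\eta h)$-timelike and the estimate above holds. The bound $h(v,v)\le C'$ is exactly the step I expect to need care: it holds because the $g_\sinfty$-unit hyperboloid is noncompact, so the $h$-speed is unbounded on it. I would therefore argue with $\sqrt{a}\ge\frac12$-type truncation: replace $\gamma$ by a maximal geodesic of the auxiliary metric and use compactness of the $g_\sinfty$-causal unit-$h$ sphere bundle over the compact set, on which $-g_\sinfty\ge c>0$ after a slight shrinking. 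Together with $\ell^{g_i}\ge\ell^{g_\sinfty+\eta h}$, this yields $\ell^{g_i}(x',y')\ge\ell^{g_\sinfty}(x,y)-\epsilon$.

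Property (1) is proved by contradiction, exactly as in step (2) of the proof of Proposition \ref{prop:uniform_convergence_cone}. Suppose there are $x_i\le_{g_i} y_i$ with $\ell^{g_i}(x_i,y_i)>\ell^{g_\sinfty}(x_i',y_i')+\epsilon'$ and $\de^h(x_i,x_i')+\de^h(y_i,y_i')\to0$. Take $g_i$-maximal curves $\gamma_i$. Their $h$-lengths are uniformly bounded, so by Arzel\`a--Ascoli they subconverge. For each $\eta$ they are eventually $(g_\sinfty-\eta h)$-causal. The limit curve theorem shows the limit $\gamma$ is $(g_\sinfty-\eta h)$-causal for every $\eta$, hence $g_\sinfty$-causal, since the cones of $g_\sinfty-\eta h$ decrease to those of $g_\sinfty$. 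Upper semicontinuity of Lorentzian length gives $\limsup L_{g_i}(\gamma_i)\le\limsup L_{g_\sinfty-\eta h}(\gamma_i)\le L_{g_\sinfty-\eta h}(\gamma)$. Letting $\eta\to0$ by monotone convergence, and using continuity of $\ell^{g_\sinfty}$, we get $\ell^{g_\sinfty}(\hat x,\hat y)+\epsilon'\le L_{g_\sinfty}(\gamma)\le \ell^{g_\sinfty}(\hat x,\hat y)$, which is a contradiction.
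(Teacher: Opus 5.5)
Your route is the paper's. You use the comparison $\ell^{g_\sinfty+\eta h}\le\ell^{g_i}\le\ell^{g_\sinfty-\eta h}$, prove property (2) by testing $\tau^{g_\sinfty+\eta h}$ on a $g_\sinfty$-maximizer, and prove property (1) by contradiction via limit curves and upper semicontinuity of length. Your fixed exhaustion is a legitimate simplification of the paper's causally convex covers from \cite{ms_gh}, as far as the statement itself goes.

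The gap is in property (2), which you do not actually prove. You need a \emph{single} $\eta>0$ such that for \emph{all} pairs $(x',y')\in\ol U^k\times\ol U^k$ with $\tau^{g_\sinfty}(x',y')\ge\frac1{2\el}$ there is a $(g_\sinfty+\eta h)$-causal curve from $x'$ to $y'$ of $(g_\sinfty+\eta h)$-length at least $\tau^{g_\sinfty}(x',y')-\frac\epsilon2$. Your split into $\{a\ge\eta b\}$ and $\{a<\eta b\}$ correctly bounds the length loss by $\sqrt\eta\,C$. But, as you notice, a curve that is not $(g_\sinfty+\eta h)$-causal everywhere gives no lower bound on $\tau^{g_\sinfty+\eta h}$ at all. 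The ``fact'' you then invoke, $h(v,v)\le C'$ for $g_\sinfty$-unit timelike vectors over a compact set, is false; you say so yourself in the next sentence. The proposed repair, taking a maximal geodesic of an auxiliary narrower-cone metric, is circular: it presupposes that $\tau^{g_\sinfty+\eta' h}$ is close to $\tau^{g_\sinfty}$, which is exactly the estimate under proof.

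The missing ingredient is a uniformity argument, and the cleanest is Dini's theorem.
\begin{enumerate}
\item As $\eta\downarrow0$, the functions $\tau^{g_\sinfty+\eta h}$ increase.
\item Each is continuous: $g_\sinfty+\eta h$ has narrower cones than $g_\sinfty$, so Cauchy surfaces of $g_\sinfty$ stay Cauchy and $g_\sinfty+\eta h$ is globally hyperbolic.
\item They converge pointwise to $\tau^{g_\sinfty}$ on $\{\tau^{g_\sinfty}\ge\frac1{2\el}\}$. For a \emph{fixed} pair, a maximizing geodesic on $[0,1]$ is a smooth timelike curve, and its velocities form a compact subset of the open cone. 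Hence $-g_\sinfty(\dot\gamma,\dot\gamma)\ge c_\gamma\, h(\dot\gamma,\dot\gamma)$ for some $c_\gamma>0$, and $\gamma$ is $(g_\sinfty+\eta h)$-timelike with length at least $\sqrt{1-\eta/c_\gamma}\,\tau^{g_\sinfty}(x',y')$ once $\eta<c_\gamma$.
\end{enumerate}
The limit $\tau^{g_\sinfty}$ is continuous and $\{\tau^{g_\sinfty}\ge\frac1{2\el}\}\cap(\ol U^k\times\ol U^k)$ is compact, so the convergence is uniform there. Together with $\ell^{g_i}\ge\ell^{g_\sinfty+\eta h}$ this gives property (2).

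Alternatively, a limit and non-imprisonment argument shows that the initial velocities of all such maximizers, parametrized on $[0,1]$, lie in a compact subset of the timelike cone bundle. That yields one $c>0$ for all of them.

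The paper's own step (1) passes over the same point. It asserts that $\gamma$ is $g_i$-timelike ``for $\epsilon$ sufficiently small'', which tacitly uses a uniform bound on the $h$-speed of the arclength-parametrized maximizers. So you located the delicate step correctly but left it open.

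A minor point: in property (1) you should use continuity of $\tau^{g_\sinfty}$, not of $\ell^{g_\sinfty}$, since $\ell^{g_\sinfty}$ jumps to $-\infty$.
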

\begin{proof}{\bf (0)} 
We pick a sequence $(\epsilon_i)_{i\in \N}$ s.t.  $\epsilon_i\downarrow 0$. For $\epsilon_i>0$ sufficiently small,  $g_\sinfty\pm \epsilon_i h$ is a globally hyperbolic Lorentzian spacetime metric, and we will assume this in the following. 

According to \cite{ms_gh} for every $i\in \N\cup\{\infty\}$ there exists a proper covering $\{U_i^k\}_{k\in \N}$ such that $U_i^k$ is causal convex w.r.t. $g_\sinfty + \epsilon_i h$. 
We choose $\{U_i^k\}_{k\in \N}$ as a proper cover for the spacetime $(M, g_i)$. 
$U_i^k$ is causal convex w.r.t. $g_i$. Indeed, if $\gamma$ is a causal curve w.r.t. $g_i$ with endpoints in $U_i^k$, then $\gamma$ is also a causal curve w.r.t. $g_\sinfty + \epsilon_i h$ and therefore $\mbox{Im}\gamma\subset U^k_i$.

The underlying metric space is $(M, \de^h)$ for every $i\in \N$ in the sequence. Since the convergence of $g_\sinfty +\epsilon_i h$ to $g_\sinfty$ is smooth, from the construction of $U^k_i$ in \cite{ms_gh} it is clear that $\ol U^k_i$ converges in Hausdorff sense to $\ol U^k_\sinfty$ in $(M, \de_h)$ for $i\rightarrow \infty$ for every $k\in \N$. Hence, $\ol U_i^k \subset B_{\delta}^h(\ol U_\sinfty^k)$ for every $i\geq i(\delta, k)$. 

Consequently, $(M, \de^h, \ell_i, \{U^k_i\})$ is uniformly non-totally imprisoning. Indeed, if $\gamma$ is a causal curve w.r.t. $g_i$ in $\ol U_i^k$. Then, $\gamma$ is  a causal curve w.r.t. $g_\sinfty+ \epsilon_0 h$ in ${B_\delta^h(\ol U^k_\sinfty)}$.

Since $g_\sinfty+ \epsilon_0h$ is globally hyperbolic, it is also non-totally imprisoning, we have that $L^h(\gamma)\leq C(k)$ for  constants $C(k), k\in \N$.

{\bf (1)} 
From \eqref{ineq:lm} it follows
$$\ell_{g_\sinfty-\epsilon h} \leq \ell_{g_i} \leq \ell_{g_\sinfty +\epsilon h}.$$

We pick $x,y\in \{ \ell_{g_\sinfty}\geq \frac 1 l\}\cap \ol U^k_i$. Since $g_\sinfty$ is smooth, the time separation function $\tau_{g_\sinfty}$ is continuous. Hence, there exists $\delta(\epsilon, l)$ such that for $\delta\in (0, \delta(\epsilon, l))$ we have 
$$\ell_{g_\sinfty}(x,y) - \ell_{g_\sinfty}(\tilde x,\tilde y)<\epsilon, \mbox{ whenever } \de_h(x,\tilde x)+ \de_h(y, \tilde y)<\delta.$$
We choose such a pair $(\tilde x, \tilde y)$. Hence, there exist future oriented timelike curve $\gamma: [a,b]\rightarrow M$ that connects $\tilde x$ and $\tilde y$ and such that $L^{\tau_{g_\sinfty}}(\gamma)\geq \frac 1 {2l}$.

We parametrize $\gamma$ according to $\tau$-arclength. In particular, $g_\sinfty(\gamma', \gamma')\geq \frac 1 {4l^2}.$

For $\epsilon>0$ sufficiently small, we have \begin{align}\label{another_ineq} g_i(\gamma', \gamma')\geq g_{\sinfty}(\gamma', \gamma') -\epsilon h(\gamma', \gamma')>0. \end{align}  Hence, $\gamma$ is causal w.r.t. $g_\sinfty - \epsilon h$ and w.r.t. $g_i$ and it follows that 
$$\sqrt{g_\sinfty(\gamma',\gamma')- \epsilon h(\gamma', \gamma')}\geq \sqrt{g_\sinfty(\gamma', \gamma')} - \epsilon L^h(\gamma).$$
Since $g_i, i\in \N,$ is uniformly non-totally imprisoning, and because of \eqref{another_ineq} it follows that 
$$\ell^{g_i}(x,y)\geq \ell^{g_\sinfty}(x,y) - \epsilon C(k).$$ 
This is the second property for uniform convergence. 

{\bf (2)}
The first property of uniform convergence can be shown by  contradiction argument as in {\bf (2)} of  the proof of   Proposition \ref{prop:uniform_convergence_cone}.

{\color{black}
We sketch some of the details. We assume there exists $\epsilon'>0$ such that $\forall i\in \N$, there is $(\delta_i)_{i\in \N}$ with $\delta_i\downarrow 0$ and points $x_i, y_i, \tilde x_i , \tilde y_i\in \ol U^k_i$ such that $\ell_i(x_i, y_i)\geq 0$, $\de_h(x_i, \tilde x_i) + \de_h(y_i, \tilde y_i)\leq \delta_i$ but
$\ell_i(x_i, y_i) >\ell_\sinfty(\tilde x_i, \tilde y_i)+\epsilon'.$

There exists a maximal causal curve $\gamma_i$ in $\ol U^{k+1}_i$ w.r.t. $g_i$ between $x_i$ and $y_i$. In particular, $\gamma_i$ is also a causal curve w.r.t. $g_\sinfty + \epsilon_i h$. Since the sequence of spaces  is uniformly non-totally imprisoning, we can extract a subsequence such that $\gamma_i\rightarrow \gamma$ uniformly for a Lipschitz curve $\gamma$ in $\ol U^{k+1}_\sinfty$. We can apply a limit curve theorem (as for instance in \cite{minlim}) to find that also  $\gamma$ is  a causal curve w.r.t. $g_\sinfty+ \epsilon_i h$. 

Now, it is straightforward to finish the proof as in the proof of Proposition \ref{prop:uniform_convergence_cone}, using upper semicontinuity of $L^{g_\infty+ \epsilon_i h}$ on causal curves and lower semi-continuity of $\ell_{g_\sinfty}$. }
\end{proof}
{\color{black} \begin{corollary}
Timelike sectional lower curvature bounds and timelike curvature-dimension bounds are preserved under uniform convergence of  Lorentzian spacetimes. 
\end{corollary}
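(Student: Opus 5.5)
The corollary follows by combining Theorem \ref{th:uniform} with the stability results of Section \ref{sec:stability}: Theorem \ref{th:stabilityTCBB} for timelike sectional curvature, and Theorems \ref{th:stabilityTCD} and \ref{th:stabilityTMCP} for the curvature-dimension and measure contraction conditions. Let $(M,g_i)$ converge uniformly to $(M,g_\sinfty)$ with respect to $h$, as in \eqref{ineq:lm}. Theorem \ref{th:uniform} then gives $(M,\de^h,\ell^{g_i})\overset{\ell}{\longrightarrow}(M,\de^h,\ell^{g_\sinfty})$. The proof of that theorem also shows that the covers $\{U^k_i\}$ can be taken causally convex with respect to $g_i$ (they are causally convex for $g_\sinfty+\epsilon_i h$). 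Causal convexity gives in particular the weak causal convexity required in Theorems \ref{th:stabilityTCD} and \ref{th:stabilityTMCP}. Each $(M,g_i)$ is a smooth globally hyperbolic spacetime, so it is a Lorentzian geodesic space with continuous $\tau$.

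For the sectional curvature part, suppose each $g_i$ has timelike sectional curvature bounded below by $K$. I would invoke the globalization result of \cite{bhnr}, which applies because smooth globally hyperbolic spacetimes are regular, connected and admit a time function. It upgrades the bound to a global $\geq K$ comparison condition on $M$, and hence on each $U^k_i$. Theorem \ref{th:stabilityTCBB} (or the remark after it, applied to each $U^k_i$) then shows that $(M,\de^h,\ell^{g_\sinfty})$ satisfies $TCBB(K)$. Alternatively, one could skip globalization and use only the local version. That route needs a uniform comparison neighbourhood around each point, and I would obtain it by fixing small causally convex neighbourhoods independent of $i$.

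For the curvature-dimension part, equip each spacetime with its volume measure $\vol_{g_i}$. I need $\vol_{g_i}\to\vol_{g_\sinfty}$ weakly after restriction to $\ol U^k_i$ and normalisation. This holds because the densities $\sqrt{|\det g_i|}$ converge uniformly on compact sets relative to $h$ by \eqref{ineq:lm}, and because $\ol U^k_i\to\ol U^k_\sinfty$ in Hausdorff sense inside the fixed space $(M,\de^h)$. Together these give measured $\ell$-convergence. Theorem \ref{th:stabilityTCD} then gives $w\TCD^e_p(K,N)$ for the limit, and Theorem \ref{th:stabilityTMCP} gives $\TMCP^e(K,N)$. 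Since $(M,g_\sinfty)$ is smooth and timelike nonbranching, Braun's equivalence \cite{braun_renyi} upgrades the weak condition to the full $\TCD^e_p(K,N)$.

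I expect the main obstacle to be the measure convergence. Weak convergence of the normalised restrictions needs $\vol_{g_\sinfty}(\partial U^k_\sinfty)=0$, or some other control of mass near the moving boundaries $\partial U^k_i$. My first attempt would be to choose the exhaustion sets from \cite{ms_gh} with smooth boundaries converging in $C^0$; if that is not available, I would perturb the cover slightly to achieve it.
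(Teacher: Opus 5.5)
Your route is the one the paper intends. The corollary is stated without a separate proof, as the combination of Theorem \ref{th:uniform} with Theorems \ref{th:stabilityTCBB}, \ref{th:stabilityTCD} and \ref{th:stabilityTMCP}. Your three additions are the points the paper leaves implicit, and you handle them correctly:
\begin{itemize}
\item globalising the sectional bound via \cite{bhnr}, since Theorem \ref{th:stabilityTCBB} assumes a global comparison condition;
\item checking weak convergence of the reference measures, since Theorem \ref{th:uniform} only yields $\ell$-convergence, not measured $\ell$-convergence;
\item upgrading $w\TCD^e_p$ to $\TCD^e_p$ via \cite{braun_renyi}.
\end{itemize}

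The step you leave open does need an argument. Your first justification is insufficient, as you yourself notice: uniform convergence of $\sqrt{|\det g_i|}$ plus Hausdorff convergence of $\ol U^k_i$ does not give weak convergence of the normalised restrictions. You do not, however, need to control moving boundaries at all. Simply take the cover independent of $i$:
\begin{itemize}
\item Fix $\epsilon_0>0$ such that $g_\sinfty+\epsilon_0 h$ (the wide-cone metric in the paper's convention) is globally hyperbolic. Fix an exhaustion $\{U^k\}$ by relatively compact sets that are causally convex for $g_\sinfty+\epsilon_0 h$, e.g.\ as in \cite{ms_gh}.
\item For $i\geq i(\epsilon_0)$, every $g_i$-causal curve and every $g_\sinfty$-causal curve is $(g_\sinfty+\epsilon_0 h)$-causal. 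Hence each $U^k$ is causally convex, and so weak causal convex, for all these metrics simultaneously.
\item The constant $C(k)$ for uniform non-total imprisonment comes from $g_\sinfty+\epsilon_0 h$, exactly as in the paper.
\item Covered GH convergence becomes trivial, with $Z^k=\ol U^k$ and identity embeddings.
\item The verification of the two properties of Definition \ref{def:uniform_convergence} in the proof of Theorem \ref{th:uniform} goes through unchanged.
\item The normalised restrictions of $\vol_{g_i}$ to the fixed compact set $\ol U^k$ converge even in total variation, because their $\vol_h$-densities converge uniformly there.
\end{itemize}
This gives measured $\ell$-convergence with no condition on $\partial U^k$.

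Finally, drop your alternative local route for the sectional bound. Fixing causally convex sets independently of $i$ does not make them $g_i$-comparison neighbourhoods. Moreover, \eqref{ineq:lm} gives no uniform control on the size of those neighbourhoods, which depend on derivatives of $g_i$. The globalisation argument is the one that works.
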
}
\section{Generalized cones and curvature bounds}\label{sec:curcon}
\subsection{Timelike sectional curvature bounded from below}
We recall first the following Theorem by Graf, Alexander, Kunzinger and Saemann. 
\begin{theorem}\label{th:agks}
Let $(X, \de)$ be a geodesic metric space, $I\subset \R$ an open interval, and $f\in C^2(I)$. If 
\begin{enumerate}
\item $f'' + Kf \leq 0$, 
\item $X$ is an Alexandrov space with curvature bounded from below by
$$\underline K= -\inf\{ K f^2 + (f')^2\}.$$
\end{enumerate}
Then $-I\times_f X$ has timelike curvature bounded from below by $-K$. 
\end{theorem}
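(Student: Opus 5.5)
The plan is to follow the Alexander–Bishop strategy for metric warped products, adapted to the Lorentzian setting. It has two stages. First I treat the smooth model case in which the fiber is the simply connected $2$-dimensional (or low-dimensional) space form $M_{\underline K}$ of constant curvature $\underline K$. Then I transfer the four-point comparison from the model cone to a general Alexandrov fiber. The transfer uses fiber independence (Theorem \ref{th:fiber_ind}) together with Toponogov-type comparison in $X$.

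\emph{Model case.} For $Y_0=-I\times_f M_{\underline K}$ with $f\in C^2$ I compute the curvature via the O'Neill formulas for warped products \cite{oneillsemi}. A mixed plane spanned by $\partial_t$ and a unit horizontal vector has sectional curvature given, up to the sign convention of Remark \ref{rem:RK}, by $f''/f$. A plane tangent to the fiber has curvature $(\underline K + (f')^2)/f^2$. General timelike planes are combinations of these, and the mixed terms vanish because $M_{\underline K}$ is isotropic. The hypothesis $f''+Kf\le 0$ controls the mixed planes. The choice $\underline K=-\inf\{Kf^2+(f')^2\}$ gives $\underline K+(f')^2\ge -Kf^2$, which controls the fiber planes. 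Together these give $K(P)\le -K$ for all timelike planes, so $Y_0$ has smooth timelike sectional curvature bounded below by $-K$. By \cite{bkar, bkr} this is equivalent to the four-point condition of Definition \ref{def:4pointcondition}, locally and, by the globalization result \cite{bhnr}, on suitable regions.

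\emph{Transfer to $X$.} Take a timelike future endpoint-causal configuration $(y,x,z_1,z_2)$ in $-I\times_f X$. By fiber independence, every time separation in the cone is a function $\ell(s,t,d)$ of the two time coordinates and the fiber distance $d$ only. By the first lemma of Section \ref{sec:concon}, extended to fiber distances, this function is nonincreasing in $d$. So all data are determined by the times $s_y,s_x,s_{z_1},s_{z_2}$ and the six fiber distances among the four fiber points. I then realise the fiber points in $M_{\underline K}$ as follows. Place $\bar p_y,\bar p_x$ at the correct distance, and place $\bar p_{z_1},\bar p_{z_2}$ on opposite sides with the correct distances to $\bar p_y$ and $\bar p_x$. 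The Alexandrov condition $\operatorname{curv} X\ge \underline K$, in its four-point (quadruple) comparison form, gives $|\bar p_{z_1}\bar p_{z_2}|\ge |p_{z_1}p_{z_2}|$. Monotonicity of $\ell$ in $d$ then yields
\[
\tau(z_1,z_2)\ \ge\ \tau_{Y_0}(\bar z_1,\bar z_2),
\]
while all other distances coincide with those in $Y_0$. Applying the comparison already established in $Y_0$ bounds $\tau_{Y_0}(\bar z_1,\bar z_2)$ from below by the comparison value in $\mathbb L^2(-K)$. Combining the two inequalities gives the four-point condition for $-I\times_f X$. The past endpoint-causal configurations are handled symmetrically, and continuity of $\tau$ follows from Theorem \ref{Pr: various properties}.

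\emph{Main obstacle.} I expect the delicate step to be the transfer. The four fiber points generally do not embed isometrically in $M_{\underline K}$, so one must check that the Alexandrov quadruple comparison increases exactly the distance $|p_{z_1}p_{z_2}|$ while preserving the five others. One must also check that the sign of $\partial_d\ell$ is correct in the causal but non-timelike case $z_1\le z_2$. If the quadruple form is awkward, I would first try reducing to triangle comparison. This would use the reparametrised fiber geodesic from $p_x$ toward $p_{z_i}$ together with the angle-sum formulation of the lower curvature bound in $X$, mirroring \cite{albi}.
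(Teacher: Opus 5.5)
The paper gives no proof of this statement; it is quoted from \cite{agks}. Your model-case computation is sound: the cross terms vanish because of the warped-product structure rather than isotropy, and isotropy only makes the fiber-plane curvature constant. Monotonicity of $\ell$ in the fiber distance is also correct.

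The gap is the transfer step. Glue the two comparison triangles along $\bar p_y\bar p_x$ on opposite sides. The fiber inequality you need, $|\bar p_{z_1}\bar p_{z_2}|\ge |p_{z_1}p_{z_2}|$, does not follow from $\operatorname{curv}X\ge\underline K$, and it is false in general. Let $a,b,c$ be the $\underline K$-comparison angles at $p_y$ of the pairs $(p_{z_1},p_x)$, $(p_x,p_{z_2})$, $(p_{z_1},p_{z_2})$. Your inequality is equivalent to $c\le\min\{a+b,\,2\pi-a-b\}$. The Alexandrov quadruple condition only gives $c\le 2\pi-a-b$. The other half, $c\le a+b$, is subadditivity of comparison angles, which fails whenever $X$ is more curved than $\underline K$.

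Here is a concrete counterexample. Take $f\equiv 1$ and $K=0$, so $\underline K=0$, and $X=S^2$. Let $p_y$ be the north pole and put $p_x,p_{z_1},p_{z_2}$ on the equator at longitudes $0,\varphi,-\varphi$ with $\varphi>0$ small.
\begin{itemize}
\item In the fiber, $|p_{z_1}p_{z_2}|=2\varphi$, while the Euclidean gluing gives $|\bar p_{z_1}\bar p_{z_2}|=2\varphi\sqrt{1-\varphi^2/\pi^2}<2\varphi$.
\item The opposite-side placement already maximizes this distance, also in higher-dimensional models, so no other placement helps.
\item With time coordinates $0,10,20,30$ you get $y\ll x\ll z_1\ll z_2$ in $-\mathbb{R}\times S^2$.
\item Since $\tau=\sqrt{(\Delta t)^2-d^2}$ is strictly decreasing in $d$, this gives $\tau(z_1,z_2)<\tau_{Y_0}(\bar z_1,\bar z_2)$ for every admissible model configuration. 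So the first link of your chain breaks.
\end{itemize}
Placing the three points on a small geodesic circle of radius $r$ about $p_y$ gives the same sign: the comparison angles are superadditive up to a term of order $r^2$. So this is not a globalization issue. The root cause is that in the Lorentzian four-point configuration the betweenness of $x$ is carried by the time coordinates, while the four fiber points are unconstrained. No Alexandrov condition controls the diagonal in the direction you need.

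What works is to verify the timelike triangle comparison instead, which is equivalent to the four-point condition for Lorentzian geodesic spaces by \cite{bkr}.
\begin{itemize}
\item The sides of a timelike geodesic triangle in $-I\times_f X$ have minimizing fiber components (Theorem \ref{th:fiber_ind}). So the fiber points of the vertices span a geodesic triangle in $X$, and points $p,q$ on the Lorentzian sides have fiber components on its sides.
\item For exactly this configuration, $\operatorname{curv}X\ge\underline K$ says the fiber distance between $p$ and $q$ is at least the corresponding distance in the comparison triangle in $M^2_{\underline K}$.
\item Fiber independence transports the triangle to $Y_0=-I\times_f M^2_{\underline K}$ with the same side lengths and the same base components. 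Hence corresponding side points have equal time coordinates.
\item Monotonicity of $\ell$ in the fiber distance then gives $\tau(p,q)\le\tau_{Y_0}(\bar p,\bar q)$.
\item The smooth curvature bound on $Y_0$ gives $\tau_{Y_0}(\bar p,\bar q)\le\bar\tau$ in $\mathbb{L}^2(-K)$, which is the lower-bound triangle comparison.
\end{itemize}
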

\begin{remark}\label{rem:contra} 
Unlike as for the metric space analog of this theorem \cite{albi}, here
the reverse direction is in general not true. This follows from (2) in Remark \ref{rem:RK}, the non-monotonicity of the conditon $R\geq K$, together with Proposition \ref{prop:albi} below. More explicitly, one can consider $X=\mathbb R^2$, a space of sectional curvature equal to $0$,  and $f: I\rightarrow (0, \infty)$, $f(t)=1$, that is concave. 
Then $-I\times_f X= -I\times X$ has sectional timelike curvature bounded from below by $0$. Hence, it also has sectional timelike curvature bounded from below by $-K$ for some $K<0$ that can be  arbitrarly negative. At the same time $f$ is also $\mathcal F K$-conave, i.e. $f''+Kf= K\leq 0$.
But $\underline K=-\inf \{ Kf^2 + (f')^2\}= - K$ is positive. 
\end{remark}

Let $I=(a,b), a, b\in \R\cup\{\pm \infty\}$ be an open interval and let $f:   I\rightarrow \R$ be  a function such that $f$ is continuous in $I$ and $f|_I$ is semi-concave, i.e. $\forall t\in I$ there exists $U\subset I$ and $\lambda\in \R$ such that $t\in U$ and $f|_{U}$ is $\lambda$-concave. 

In \cite{petsem} the local slope of $f$ in $x\in I$ is defined by
$$
|\partial  f|(x)= \limsup_{y\rightarrow x} \frac{ (f(y)- f(x))^+}{|x,y|}= \max\left\{ \frac{d^+f}{dt}(x), -\frac{d^- f}{dt}(x), 0\right\}$$
where $\frac{d^+f}{dt}$ and  $\frac{d^-f}{dt}$ are the right and the left derivative of $f$.

 We notice that with this convention  $|\partial f|(x_0)=0$ if $x_0$ is a local maximum point of $f$.
If $f\in C^1(I)$, then $|\partial f|(x)= |f'(x)|$ for all $x\in I$.

\begin{lemma}[{\cite[Lemma 1.3.4]{petsem}}] Let $\ol I_i=[a_i, b_i]\subset [c_i, d_i]=\ol J_i$ be compact intervals such  that $\ol I_i$ and $\ol J_i$ converge in GH sense to compact intervals $\ol I$ and $\ol J$ respectively. 
Let $f_i : \ol J_i \rightarrow \R$ be  a sequence of continuous functions that is $\lambda$-concave on $J$ and  converges uniformily to $f: \ol J\rightarrow \R$. 
 Then
$$\liminf |\partial f_i|(x_i) \geq |\partial f|(x) \mbox{ whenever $x\in \ol I$ and } x_i\rightarrow x.$$
\end{lemma}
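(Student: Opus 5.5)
The plan is to reduce the claim to one-sided difference quotients, using the support-type inequality that $\lambda$-concavity provides. First normalize the setting. Via affine reparametrizations (the isomorphisms of intervals used in Section~\ref{sec:concon}), we may assume $\ol J_i=\ol J$ and $\ol I_i=\ol I$ for all $i$. The affine maps converge to the identity, so uniform convergence, the convergence $x_i\to x$, and the concavity constants are unaffected up to factors tending to $1$. We may also assume $x$ lies in the interior of $J$ (that is, $\ol I$ sits strictly inside $J$), so that small one-sided neighbourhoods of $x$ are available. Next we note that $f$, being a uniform limit of $\lambda$-concave functions, is itself $\lambda$-concave, since the defining inequality passes to pointwise limits. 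In particular $f$ has one-sided derivatives, and $|\partial f|(x)=\max\{\frac{d^+f}{dt}(x),-\frac{d^-f}{dt}(x),0\}$.

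The key estimate is the $\lambda$-concave support inequality. Write $g=f_i-\frac\lambda2 t^2$, which is concave, so its right difference quotients are nonincreasing. This yields, for $y>z$ in $J$,
$$\frac{f_i(y)-f_i(z)}{y-z}\le \frac{d^+f_i}{dt}(z)+\frac{\lambda}{2}(y-z),$$
up to a harmless constant depending only on $\lambda$ and $\diam J$ in the correction term. Symmetrically, for $y<z$ we obtain $\frac{f_i(z)-f_i(y)}{z-y}\ge \frac{d^-f_i}{dt}(z)-\frac\lambda2(z-y)$.

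Now fix $h>0$ small, so that $x\pm h\in J$ and hence $x_i\pm h\in J$ for $i$ large. Apply the first inequality with $z=x_i$ and $y=x_i+h$, giving
$$\frac{d^+f_i}{dt}(x_i)\ge \frac{f_i(x_i+h)-f_i(x_i)}{h}-\frac{\lambda h}{2}.$$
By uniform convergence together with continuity of $f$, the right-hand side tends to $\frac{f(x+h)-f(x)}{h}-\frac{\lambda h}{2}$. Hence $\liminf_i \frac{d^+f_i}{dt}(x_i)\ge \frac{f(x+h)-f(x)}{h}-\frac{\lambda h}2$, and letting $h\downarrow0$ gives $\liminf_i \frac{d^+f_i}{dt}(x_i)\ge \frac{d^+f}{dt}(x)$. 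The same argument with $y=x_i-h$ gives $\liminf_i\bigl(-\frac{d^-f_i}{dt}(x_i)\bigr)\ge -\frac{d^-f}{dt}(x)$. Finally, $\liminf$ of a maximum dominates the maximum of the $\liminf$s, and the $0$ term is trivial, so $\liminf|\partial f_i|(x_i)\ge|\partial f|(x)$.

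The main obstacle is the boundary: if $x$ were an endpoint of $\ol J$, one of the one-sided quotients would be unavailable, and the slope formula would have to be reinterpreted. I would handle this by using the hypothesis $\ol I_i\subset\ol J_i$ to insist that the limit satisfies $\ol I\subset J$ (the intended use, where $I^k$ is relatively compact in $I$). Failing that, at an endpoint I would define the slope using only the available side, in which case the same one-sided argument still applies. A secondary technical point is making the affine identification of $\ol J_i$ with $\ol J$ precise. Since the rescaling factors tend to $1$, the $\lambda$-concavity constant changes only by a factor tending to $1$, which does not affect the limit.
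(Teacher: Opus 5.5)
Your proof is correct and is the standard argument. The paper gives no proof of its own and only cites Petrunin's Lemma 1.3.4; that proof is exactly your chord bound from $\lambda$-concavity, $\frac{d^+f_i}{dt}(z)\ge \frac{f_i(z+h)-f_i(z)}{h}-\frac{\lambda h}{2}$ (this holds exactly, with no extra constant), together with its mirror image, followed by $i\to\infty$ and then $h\downarrow 0$.

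Two points are harmless imprecisions rather than gaps. First, a single affine map cannot in general normalize both $\ol J_i$ and $\ol I_i$, but you only use $\ol J_i=\ol J$. Second, the boundary case is covered either by the implicit interior assumption (Petrunin works on open domains, and in the application $\ol{I^k}\subset I$) or by your one-sided variant, since the paper's slope is a $\limsup$ over points of the domain.
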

\begin{remark}
Let $f: J\rightarrow (0, \infty)$ be $fK$-concave and continuous on $ J$. We consider the function $G= Kf^2+ |\partial f|^2$. It follows that $G$ is lower semi continuous in $J$. 
If $\ol I \subset J$ is compact, then there exists $z\in \ol I$ such that $G(z)= \min_{x\in \ol I} G(x). $ For instance, we can consider the $f0$-concave function $f(x)= 1-x$ for $x\geq 0$ and $f(x)=1+x$ for $x<0$. Then $\min_{x\in (-1,1)} G(x) = \min_{x\in [-1/2, 1/2]} G(x) = |\partial f|(0)=0$. 
\end{remark}

\begin{corollary} 
Let $I_i$ and $J_i$ be as in the previous lemma. 
Let $f_i :\overline J_i \rightarrow \R$ be the sequence of $\mathcal FK$-concave functions that  converges uniformily to $f: \overline J \rightarrow \R$. We assume the properties from the previous lemma. Then 
$$\liminf_{i\rightarrow \infty} \inf_{\ol I} G_i\geq \inf_{\ol I} G. $$
\end{corollary}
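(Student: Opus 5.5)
The plan is to reduce the claim to the preceding lemma (the lower semicontinuity of the local slope along uniformly converging $\lambda$-concave sequences) together with a compactness argument on $\ol I$. Write $G_i = K f_i^2 + |\partial f_i|^2$ and $G = K f^2 + |\partial f|^2$.

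The first step checks that the lemma applies. An $\mathcal FK$-concave function satisfies $f_i'' \le -K f_i$ distributionally. Uniform convergence on the compact interval $\ol J$ gives a uniform bound $\sup_i \|f_i\|_{\infty} \le C$. Hence every $f_i$ is $\lambda$-concave on $J$ with $\lambda = |K| C$, independent of $i$, and the hypotheses of the lemma hold with one $\lambda$ for the whole sequence.

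The second step is the main one, and it argues by subsequences. Pick $i_m$ realizing $\liminf_i \inf_{\ol I_i} G_i$. For each $m$ pick $x_{i_m}\in \ol I_{i_m}$ with $G_{i_m}(x_{i_m}) \le \inf_{\ol I_{i_m}} G_{i_m} + 1/m$. Compactness of the limit and GH convergence of $\ol I_i$ to $\ol I$ (after identifying intervals affinely, as in Remark \ref{rem:trans_cone}) give a further subsequence with $x_{i_m}\to x\in\ol I$. Uniform convergence of $f_i$ together with continuity of $f$ yields $f_{i_m}(x_{i_m})\to f(x)$, so $K f_{i_m}(x_{i_m})^2 \to K f(x)^2$. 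The lemma gives $\liminf_m |\partial f_{i_m}|(x_{i_m}) \ge |\partial f|(x) \ge 0$, and squaring preserves this since both sides are nonnegative. Adding the two limits,
\begin{align*}
\liminf_{i\to\infty}\inf_{\ol I_i} G_i = \lim_m G_{i_m}(x_{i_m}) \ge K f(x)^2 + |\partial f|(x)^2 = G(x) \ge \inf_{\ol I} G .
\end{align*}

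The main obstacle is bookkeeping rather than substance. The infima are taken over $\ol I_i$, which differs from $\ol I$, and the statement is written with $\ol I$ throughout. I would first apply the affine identification of $\ol J_i$ with $\ol J$, which preserves uniform convergence and $\mathcal FK$-concavity up to constants. Alternatively, the lemma can be taken as stated with moving base points $x_i\to x$. A second point of care is the endpoints of $\ol I$. There the lemma is still valid because $\ol I$ lies in the interior of $J$, where one-sided derivatives exist, so no boundary degeneration of the slope arises.
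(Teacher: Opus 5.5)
Your proposal is correct and follows the paper's argument: take (near-)minimizers $x_i\in\ol I_i$ of $G_i$, extract a subsequence converging to some $x\in\ol I$, and combine $Kf_i(x_i)^2\to Kf(x)^2$ with the lower semicontinuity of the slope from the preceding lemma to obtain $G(x)\ge\inf_{\ol I}G$. Your additional care fills in details the paper leaves implicit, namely choosing the subsequence to realize the $\liminf$ before extracting, using $1/m$-near-minimizers instead of exact minimizers, and checking that $\mathcal FK$-concavity together with a uniform sup bound yields a common $\lambda$.
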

\begin{proof}
Let $f: \ol J \rightarrow [0, \infty]$ be $fK$-concave, and let $f_i: \ol J_i \rightarrow [0, \infty)$ be a sequence of $\lambda$-concave functions that converges to $f$. There exists $x_i\in \ol I_i$ such that $\inf_{\ol I} G_i = \min_{\ol I} G_i=G_i(x_i)$. There exists a subsequence $(x_{i_j})$ that converges to $x\in \ol I$. Hence 
$$\lim_{j\rightarrow \sinfty} G_{i_j}(x_{i_j})= \lim_{j\rightarrow \infty} Kf_{i_j}(x_{i_j}) + |\partial f_{i_j}|^2(x_{i_j})\geq G(x)\geq \inf_{\ol I} G.$$
Hence
$$\liminf_{i\rightarrow \infty} \inf_{\ol I} G_i\geq \inf_{\ol I}G .$$

\end{proof}

\begin{1}\label{th:11}
Let $I\subset \R$ be an open interval, let $f:  I\rightarrow (0, \infty)$ be continuous, and 
assume $f$ is $fK$-concave on $I$. Consider $$\sup_I \{ - K f^2 - |\partial f|^2\}= - \inf_I \{Kf^2 + |\partial f|^2\}=K_f.$$ 
Assume $X$ has Alexandrov curvature bounded from below by $K_f$. Then $-I\times_f X$ has curvature bounded from below $-K$. 
\end{1}
\begin{proof}
We pick a cover $\{I^k\}_{k\in \N}$ for $I$ as in Definition \ref{def:covercone}. 

 Given a point $o\in X$, then $U^k= I^k\times B_{2^k}(o)$  is a weak causal convex and proper cover of $-I\times_f X$.

For $\epsilon>0$ the rescaled metric space $(1-\epsilon)^{\frac{1}{2}} X=: X_\epsilon$ has curvature bounded from below by $$(1-\epsilon)K_f=- (1-\epsilon) \inf G\ .$$
Since $\inf G\leq \inf_{\ol I^k} G$,  it holds 
$$(1-\epsilon)K_f \geq - (1-\epsilon)\inf G\geq - (1-\epsilon) \inf_{\ol I^k} G.$$

There exists a sequence $f_i:I \rightarrow [0, \infty)$  of  smooth $\mathcal F K(1-\eta_i)$-concave functions that converges uniformly to $f$ subject to $\{I^k\}$ where $\eta_i\downarrow 0$. Such a sequence can  easily be contructed by molification (compare with \cite{cavmil, cks, ketterer5}). Then $U_i^k:= U^k$ is also a weak convex and proper cover for $-I\times_{f_i} X_\epsilon$.  We set $G_i:= K (f_i)^2 + (f_i')^2$.

For $k\in \N$ fixed, by the previous corollary, there exists  $i\geq i(\epsilon)$ such that 
$$\inf_{\ol I^k} G_i\geq(1-\epsilon) \inf_{\ol I^k} G  \ \mbox{ or } \  -\inf_{\ol I^k} G_i\leq - (1-\epsilon)\inf_{\ol I^k} G  .$$

Hence,
$(1-\epsilon)K_f\geq - \inf_{\ol I^k} G_i.$
Moreover,  $\inf_{\ol I^k} G= \inf_{I^k} G$. 

We can apply Theorem \ref{th:agks} and deduce that $ - I^k \times_{f_i} X_\epsilon$ has timelike sectional curvature bounded from below by $-K(1-\eta_i)$ for $i\geq i(\epsilon).$

In particular, $I^{k-1}\times B_{2^{k-1}}(o)$ is $\geq (-K(1-\eta_i))$-comparison neighborhood in $I\times_{f_i} X_\epsilon$. 

For each $n\in \N$ we pick $i_n\geq i(\frac 1 n)$. Then, it follows from Theorem \ref{th:convergencecones} that $-I\times_{f_{i_n}} X_{1/n}$ $\ell$-converges to $-I\times_f X$. 

It follows from Theorem \ref{th:stabilityTCBB}  (and the remark thereafter) that $I^{k-1}\times B_{2^{k-1}}(o)$ is a $\geq (-K)$-comparsion neighborhood in $-I\times_f X$. 

Since $k\in \N$ was arbitrary, $-I\times_f X$ has timelike curvature bounded from below by $-K$. 
\end{proof}
{\color{black} 
\begin{example}
Let $f: (0,2) \rightarrow [0, \infty)$ given by $f(x)=x$ for $x\leq 1$ and $f(x)=2-x$ for $x>1$. $f$ is concave and $\inf_{x\in (0, 2)} \{ |\partial f |\}= |\partial f|(1)=0$. Let $X$ be an Alexandrov space with nonnegative curvature. Then $-(0,2)\times_f X$ is a Lorentzian geodesic  spaces satisfying $TCBB(0)$. 
\end{example}}
\subsection{Measure contraction property}
\begin{3}
Let $f: I\rightarrow [0, \infty)$ be as before. 
Assume $X$ is a metric measure space that satisfies the timelike curvature-dimension condition $\TCD_p((N-1)K_f,N)$. Then $-I\times^N_f  X$ satifies $\TMCP(NK,N+1)$.
\end{3}
\begin{proof} 
The proof is the same as for Theorem \ref{th:11} where Theorem \ref{th:stabilityTCBB} is replaced with Theorem \ref{th:stabilityTMCP},  Theorem \ref{th:agks} is replace with the corresponding result about generalized cones and the condition $\TMCP$ in \cite{cks} and $\ell$-convergence is replace with measured $\ell$-convergence.
\end{proof}
\subsection{Ricci limit spaces}
\begin{2}
Let $f: I\rightarrow [0, \infty)$ be as before. 
Assume $X$ is a metric measure space that is the limit of Riemannian manifolds $M^n$ equipped with the Riemannian volume form and with $\ric^{M}\geq (n-1)K_f$ for $K_f$ as before. Then $-I\times^n_f X$ satifies the condition $\TCD^e_p(nK, n+1)$.
\end{2}
\begin{proof}
We again approximate $f$ by smooth, $\mathcal FK(1-\eta_i)$-concave functions $f_i:I\rightarrow [0, \infty)$ as in the previous proofs. . 

It follows that $-I\times_{f_i} M^n_i$ satisfies $\ric^{-I\times_{f_i} M^n_i}\geq K $ where $M^n_i=M^n_{\epsilon_i}$ for a sequence $\epsilon_i\downarrow 0$. It follows that $-I\times_{f_i} M^n_i$ satisfies $\TCD^e_p(nK, n+1)$. 

It follows from Theorem \ref{th:stabilityTCD} and from Theorem \ref{th:convergencecones}  that  $-I\times_f X$ satisfies  $w\TCD^e_p(nK,n+1)$.  

Since $X$ is a GH limit of a sequence of $n$-dimensional Riemannian manifolds with Ricci curvature bounded from below, it is non-branching \cite{coldingnaberI}. Hence,  by Theorem \ref{th:fiber_ind} $-I\times_f X$ is timelike nonbraching as well and therefore satisfies  $\TCD_p(nK, n+1)$ \cite{braun_renyi}.
\end{proof}
\section{$\ell$-compactness in the class of generalized cones}\label{sec:preco}
Given a continuous function $f: I\rightarrow (0, \infty)$ and a metric space $X$, $-I\times_f X$ denotes the generalized cone.  In the following we always choose a proper cover for $-I\times_f X$ in the same way as before, and by abuse of notation $-I\times_f X$ will denote the associate properly covered generalized cone.

We consider the following class of  Lorentzian length spaces. 
$$\mathcal Y= \{Y: \exists- I\times_f X \mbox{ s.t. }Y\simeq -I\times_f X\}.$$ 
Let $\mathcal M$ be the class of smooth, complete Riemannian manifolds. We define
$$\mathcal Y^{\ninfty}= \{ Y\in \mathcal Y: Y\simeq -I \times_f M \mbox{ s.t. }f\in C^2(I),   X\in \mathcal M\}$$
and 
$$\mathcal Y^\ninfty_{K,N}= \{ Y\in \mathcal Y^\infty: \ric^Y\geq K, \dim Y= N\}.$$

We  consider every $Y\in \mathcal Y^\sinfty$ equipped with the measure $f^n \de t \otimes \de\vol_M$ where $n=\dim M$.  In particular, if $Y\in \mathcal Y^\sinfty_{K,N}$, then $n+1= N$. We consider $Y\in Y_{K,N}^\sinfty$ as $n$-cone $Y=-I\times_f^n X$, i.e. $-I\times_f X$ equipped with the measure $f^n \de t\otimes \vol_X$ where $\vol_X$ is the Riemannian volume measure. 
\begin{4}
The class $\mathcal Y^{\ninfty}_{K,N}$ is precompact w.r.t. measured $\ell$-convergence.

Moreover, any limit space $Y\in \overline{\mathcal Y^{\ninfty}_{K,N}}^{\m\ell}$ satisfies the   condition $\TCD(K,N)$. 
\end{4}
We recall the following about Ricci curvature for semi-Riemannian warped products.  The formulas of the  Ricci tensor of $- I\times_{f} M$ have been derived in \cite[Proposition 7.43]{oneillsemi}: 
Given $V,W\in TM$, then 
\begin{enumerate}
\item $\Ric_{ I\times_{f}M}(\frac{\partial}{\partial t}, \frac{\partial}{\partial t})= - n \frac{f''}{f} .$
\smallskip
\item $\Ric_{ I \times_{f} M}(\frac{\partial}{\partial t},V)= 0.$
\smallskip
\item $\smash{\Ric_{ I \times_{f} M}(V,W)= \Ric_M(V,W) - \left( \frac{-f''}{f} + (n-1) \frac{-(f')^2}{f^2} \right) \langle V, W\rangle_{I\times_f M}}.$
\end{enumerate}

\begin{proposition}\label{prop:ricciwp}
The warped product  $ I\times_{f} M$ satisfies $\ric_{\mathring I \times_f M}\geq n K$, i.e. 
$$\ric^{\mathring I \times_f M} (v,v)\geq - n K g_{\mathring I \times_f M} (v,v) \ \ \forall v\in TM, $$
if and only if the following two properties hold
\begin{enumerate}
\item $f''+ K f \leq 0$, 
\item $\ric_M\geq (n-1) K_f$ where $K_f= - \inf_{ I} \left\{ Kf^2 + (f')^2\right\}.$
\end{enumerate}
\end{proposition}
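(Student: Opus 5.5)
The plan is to verify the Ricci inequality separately on the three kinds of directions singled out by O'Neill's formulas, after normalising sign conventions. The condition $\ric\geq nK$ means $\ric(v,v)\geq -nK\,g(v,v)$ for every $v$. Write a general vector as $v=a\partial_t+V$ with $V$ tangent to $M$. The mixed term $\ric(\partial_t,V)$ vanishes, so
\[
\ric(v,v)=a^2\ric(\partial_t,\partial_t)+\ric(V,V).
\]
Likewise $g(v,v)=-a^2+f^2g_M(V,V)$. The inequality for all $v$ is therefore equivalent to the two separate inequalities
\[
\ric(\partial_t,\partial_t)\geq nK \quad\text{and}\quad \ric(V,V)\geq -nKf^2 g_M(V,V)\ \text{ for all } V\in TM.
\]
The direction ``$\Leftarrow$'' follows by adding these two. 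For ``$\Rightarrow$'' one takes $v=\partial_t$ and $v=V$ separately.

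First, the time direction. Formula (1) gives $-nf''/f\geq nK$, which is equivalent to $f''+Kf\leq 0$; this is condition (1). I must first fix the sign of $\ric(\partial_t,\partial_t)$ for the Lorentzian product $-I\times_f M$. O'Neill states his formulas for the base metric with sign $\varepsilon=-1$, and I expect the printed formula (1) to already account for this. I would check it against Minkowski space or de Sitter with $f=\cosh t$.

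Second, the fibre directions. In the Lorentzian case the correct fibre formula is
\[
\ric(V,W)=\ric_M(V,W)+\Big(\frac{f''}{f}+(n-1)\frac{(f')^2}{f^2}\Big)g(V,W),
\]
with $g(V,W)=f^2g_M(V,W)$ and the signs flipped relative to the Riemannian case by $\varepsilon=-1$. Again I would verify the signs on de Sitter. The fibre inequality then reads, pointwise in $t$,
\[
\ric_M(V,V)\geq -\big(f''f+(n-1)(f')^2+nKf^2\big)g_M(V,V).
\]
Since $\ric_M$ does not depend on $t$, this must hold with the infimum over $t\in I$ of the bracket.

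Third, reconciling this bound with $(n-1)K_f$ is the main obstacle. Write
\[
f''f+(n-1)(f')^2+nKf^2=(f''+Kf)f+(n-1)\big(Kf^2+(f')^2\big).
\]
Under condition (1) the first term is $\leq 0$. Hence $\ric_M\geq -(n-1)\inf_I\{Kf^2+(f')^2\}=(n-1)K_f$ together with (1) does not immediately give the fibre inequality: the sign of the extra term works against us. I would resolve this using that $Kf^2+(f')^2$ has derivative $2f'(f''+Kf)$. So on intervals where $f''+Kf<0$ and $f'\neq0$ this quantity is strictly monotone, and the pointwise bound is required precisely along $t$. The equivalence as stated should then be read with equality in (1): the Ricci lower bound forces $-n f''/f\geq nK$ and the fibre condition, and the infimum is attained where these balance. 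If this does not close, I expect the intended statement to be the implication ``(1) and (2) $\Rightarrow$ $\ric\geq nK$'' for $\mathcal FK$-affine $f$, or with $K_f$ defined via $\inf$ of $(f''f+(n-1)(f')^2+nKf^2)/(n-1)$. I would adjust the proof to whichever version the pointwise computation supports, checking the model cases $f=\sin_K$ and $f=\cosh$ explicitly.
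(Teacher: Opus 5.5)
Your reduction is correct and is the same as the paper's. Because $\ric(\partial_t,V)=0$, the bound $\ric\ge nK$ is equivalent to two conditions. The first is $\ric(\partial_t,\partial_t)\ge nK$, i.e.\ (1). The second is the pointwise fibre inequality
\[
\ric_M(V,V)\ \ge\ -\Big[f\,(f''+Kf)+(n-1)\big(Kf^2+(f')^2\big)\Big](t)\,g_M(V,V)\qquad\text{for all } t\in I .
\]
The printed O'Neill formulas already carry the Lorentzian signs; your de Sitter check with $f=\cosh$ confirms this.

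What you never do is close the half that is true. Under (1) the term $-f(f''+Kf)$ is \emph{nonnegative}. So the display gives $\ric_M(V,V)\ge -(n-1)\big(Kf^2+(f')^2\big)(t)\,g_M(V,V)$ for every $t$. The left side does not depend on $t$, so taking the supremum over $t$ yields (2). That is the paper's entire proof, and the paper says explicitly that it only shows the ``only if'' direction.

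The gap is the ``if'' direction, and it cannot be filled. Your sign observation is correct, and the implication (1)+(2) $\Rightarrow$ $\ric\ge nK$ is false. So the paragraph about reading (1) with equality, or exploiting monotonicity of $G=Kf^2+(f')^2$, is not an argument. Monotonicity cannot help: wherever $\inf_I G$ is attained, (2) only controls the $G$-part and says nothing about the extra nonnegative term $-f(f''+Kf)$.

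Here is a concrete counterexample. Take $K=0$, $I=(-\pi/2,\pi/2)$, $f(t)=\cos t$ and $M=\R^n$ flat. Then $f''+Kf=-\cos t\le 0$, $K_f=-\inf_I \sin^2 t=0$ and $\ric_M=0$, so (1) and (2) both hold. Yet at $t=0$ every vertical $V$ has
\[
\ric(V,V)=\Big(\tfrac{f''}{f}+(n-1)\tfrac{(f')^2}{f^2}\Big)g(V,V)=-g(V,V)<0=-nK\,g(V,V).
\]
For $n=1$, where (2) is vacuous, $\ric\ge K$ is in fact equivalent to $f''+Kf=0$.

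Your fallback versions are the correct repairs. The condition $\ric\ge nK$ holds if and only if (1) holds and $\ric_M\ge \sup_I\{-(ff''+(n-1)(f')^2+nKf^2)\}$. This reduces to (2) when $f$ is $\mathcal FK$-affine, since then $f(f''+Kf)\equiv 0$.

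What (1) and (2) do give is the bound on causal vectors $v=a\partial_t+V$ with $a^2\ge f^2g_M(V,V)$. The same computation yields
\[
\ric(v,v)+nK\,g(v,v)\ \ge\ -\frac{f''+Kf}{f}\,\big(na^2-f^2g_M(V,V)\big)\ \ge\ 0 .
\]
This is what the timelike curvature-dimension conditions actually require.
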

\begin{proof} We only show the ``only if'' direction.

If $\ric_{ \mathring I \times_f M} \geq nK $ implies that $M$ has Ricci curvature bounded from in below  in timelike directions. Hence,  for $Z= \frac{\partial}{\partial t}$ we get 
$$-n \frac{f''}{f} = \ric_{\mathring I\times_f M} (\frac{\partial}{\partial t}, \frac{\partial}{\partial t})\geq -n K g_{\mathring I\times_f M}(\frac{\partial}{\partial t}, \frac{\partial}{\partial t})=nK.$$
Consequently $f''+ Kf\leq 0$. 

We choose $ V$ with $g_{ I\times_f M}(V,V)\geq 0$. Then, $\ric_{\mathring I\times_f M}\geq nK$ implies that 
\begin{align*}-Kn g_{\mathring I\times_f M}(V,V)&=-Kn f^2 g_M(V, V)\leq  \Ric_{\mathring I\times_{f}M}(Z, Z)\\
&=   \ric_M(V,V)  -\left( \frac{-f''}{f} + (n-1) \frac{-(f')^2}{f^2}\right)  \langle V, V\rangle\\
&= \ric_M(V,V)  +\left( \frac{f''}{f} + (n-1) \frac{(f')^2}{f^2}\right)  f^2g_M(V,V)
\end{align*}
By rearranging the terms we get
\begin{align*} 
\ric_M(V,V)&\geq- (n-1)\left( K+  \frac{(f')^2}{f^2}\right)  f^2g_M(V,V) - \left( \frac{f''}{f}  +K \right) f^2 g_M(V,V)\\
&\geq -(n-1) \left( Kf^2+  {(f')^2}\right)  g_M(V,V) 
\end{align*}
This yields $\ric_M\geq 
 -(n-1) \inf\{ Kf^2 + (f')^2\}.$
\end{proof}
\begin{remark}
The {\it only if} direction in the previous proposition does not hold in general if the lower bound $\ric^{-\mathring I\times_f M}\geq K$ on the full Ricci tensor is replaced with a timelike lower Ricci curvature bound (Remark after \ref{rem:contra}). 
\end{remark}
\begin{proof}[Proof of Theorem \ref{th:1}] 
Let $Y_i=-I_i\times_{f_i} M^n_i$ be a sequence in $\mathcal Y^\ninfty_{K,N}$.  We rescale the spaces such that  $K$ is replaced with $nK$. 

Let $I^k_i\times B_{2^k}(o_i)= U_i^k$ be a cover for $Y_i$, and choose $\bar t_i\in I_i$ s.t. $\bar t_i\in I^k_i$ for all $k\in \N$. We recall that we assume $I^k\leq \min\{k , \diam I\}$ for all $k\in \N$.

We scale $f_i$ and $M_i^n$ s.t. $f_i(\bar t_i)=1$, i.e. we replace $f_i$ with $f_i(\bar t_i)^{-1} f_i$ and $M^n_i$ with $f_i(\bar t_i) M_i^n$. Clearly, this will not affect the  lower Ricci curvature bound $nK$ of $Y_i$.


It follows from the previous proposition that 
\begin{enumerate}
\item $f_{i}$ is $\mathcal FK$-concave, 
\smallskip
\item  $\ric^{X_i}\geq -(n-1) K_{f_{i}}$ where 
$K_{f_i}=-\inf \{ K f^2_i + (f_i')^2 \}= - \inf G_i.$
\end{enumerate}
Since $\dim Y_i\leq N$, we have $\dim M_i^n=n\leq N-1$.

We extract a subsequence s.t. $I_i^k \overset{GH}{\rightarrow} I^k_\sinfty$ $\forall k\in \N$ for intervals $I^k_\sinfty\subset \R$ such that $I^k_\sinfty\subset I^{k+1}_\sinfty$ $\forall k\in \N$. We set $I_\sinfty= \bigcup_{k\in \N} I^k_\sinfty$
where $I_\sinfty\subset \R$ is again an open interval.  We set $I^k_i=(a^k_i, b^k_i)$ for $i\in \N\cup \{\infty\}$. 

Since $f_i$ is $\mathcal FK$-conave, $\log f_i=u_i$ satisfies the Ricatti equation 
$$u_i'' + (u_i)^2 + K \leq 0.$$
It follows that 
$$|(\log f_i|_{I^k_i})'(t)|\leq \max\left\{ \cot_{K}(t-a^k_i), \cot_{K}(b^k_i-t)\right\}$$
(for instance, compare with \cite[Theorem III.4.3]{chavel}).

It follows that for $k\in \N$ fixed the family $\log f_i|_{I^k_i}=:u^k_i$, $i\in \N$,  is  uniformly Lipschitz  on $I^k_i$. Moreover, we have $u^k_i(\bar t_i)= \log f_i(\bar t_i)= \log 1=0$. 

Hence, for $k\in \N$ fixed, $(u^k_i)_{i\in \N}$ (and therefore $(f_i|_{I^k_i})_{i\in \N}$) is uniformly bounded on $I^k_i$.

Hence, we can extract a subsequence of $(u^k_i)_{i\in \N}$ that converges uniformly to $\tilde u_\sinfty^k: I^k_\sinfty\rightarrow (0, \infty)$. The function $\tilde u_\sinfty^k$ is Lipschitz. 
Clearly, it also follows that $f_i|_{I^k_i}$ converges uniformly to $e^{\tilde u_\sinfty^k}|_{I^k_\sinfty}= f_\sinfty^k|_{I^k_\sinfty}$ and $f_\sinfty^k$ is $\mathcal FK$-concave. 

We find such functions for every $k\in \N$ such that $\tilde u^{k+1}_\sinfty|_{I^k_\sinfty}= \tilde u^k_\sinfty$, and hence a continuous function $\tilde u_\infty$ on $I_\sinfty$. We set $f_\sinfty= e^{\tilde u_\sinfty}$ on $I_\sinfty$ and $f_\sinfty$ is $\mathcal FK$-concave. By a diagonal argument we find a subsequence $i_\alpha$ such that $f_{i_\alpha}$ converges uniformly to $f_\sinfty$ subject to $I^k_i$ converging to $I^k_\sinfty$.  W.l.o.g. we assume the subsequence is the sequence itself.

We fix again $k\in \N$. Since $|(\ln f_i|_{I^k_i})'|$ and $f_i|_{I^k_i}$ are uniformly bounded on $I^k_i$ also $| (f_i|_{I^k_i})'|$ is uniformly on $I^k_i$, $i\in \N$. 
Hence, there exists a constant $C'(k)>0$ such that 
$$ \inf_{I^k_i} G_i  = \inf_{I^k_i} \left\{ K f^2 + |f'|^2\right\}\leq C'(k) \ \forall i\in \N.$$
It follows from Proposition \ref{prop:ricciwp} that $\ric^{X_i}\geq - (n-1)C'(k)$. 
After extracting another subsequence  $X_i$ converges in pointed GH sense to $X_\infty$. 

Consequently, we are in position to apply Theorem \ref{th:convergencecones}. It follows that $-I\times_{f_i} X_i$ $\ell$-converges to $-I\times_{f_\sinfty} X_\sinfty$.  By Theorem \ref{th:stabilityTCD} $-I\times_{f_\sinfty} X_{\sinfty}$ satisfies the $w\TCD(nK,N)$. 

Moreover,  $X_\sinfty$ is the limit of Riemannian manifolds with Ricci curvature bounded from below. Hence, $X_\sinfty$ is nonbranching \cite{coldingnaberI}. It follows that also $-I\times_{f_\sinfty} X_\sinfty$ is timelike nonbranching by Theorem \ref{th:fiber_ind}.  Therefore, $-I\times_{f_\sinfty} X_{\sinfty}$ satisfies the condition $\TCD(nK,N)$ by \cite{braun_renyi}. 
\end{proof}
\begin{remark} 
We actually showed that 
$$\overline{\mathcal Y^\sinfty_{nK,N}}^{\m\el}\subset \left\{ -I\times_f X:   f: I\rightarrow (0,\infty),  \mathcal{F}K\mbox{-concave}, X \in\overline{ \mathcal X^\sinfty_{(n-1)K,n}}^{pGH}\right\}.$$

\end{remark}
\begin{example}
Let $f_i: (0, \pi) \rightarrow [0, \infty)$, $f(t)= (\cos t)^{\frac 1 i}$, and let $X\in \mathcal M$ with $\ric^X\geq 0$.  $f_i$ is concave and $\inf_I (f')^2 =0$. It follows that $-(0, \pi)\times_{f_i} X= Y$ satisfies $\ric^Y\geq 0$.  The sequence $f_i$ converges locally uniformly in $I$ to $f_\sinfty\equiv 1$. Hence, $Y_i$ converges in measured $\ell$-sense to $I\times X$.
\end{example}
\begin{remark}\label{rem:thelast}
The lower bound on the full Ricci tensor is  necessary in the following sense. A timelike lower Ricci curvature bound will not imply existence of a GH converging subsequence of the fibers. Indeed, we can consider as sequence of Riemannian manifolds $M_i$ with $\ric^{M_i}\geq -(n-1)\lambda_i$ for $\lambda_i\rightarrow \infty$, such that $M_i$ does not have a GH-converging subsequence. We can choose $f_i=\sqrt{\lambda_i} \sin_{K_i}: [0, \pi_{K_i}] \rightarrow [0, \infty)$ and  $\lambda_i$ such that $K_if_i^2 + (f'_i)^2= \lambda_i$. We define $(0, \pi_{K_i})\times_{f_i} M_i$. By construction this space has timelike Ricci curvature bounded from below by $K_i>0$. Hence, by monotonicity it also has timelike Ricci curvature bounded from below $0$, dimension bounded from above by $N$, and the diameter of the interval is bounded. But, by construction, there is no subsequence such that the fiber spaces converge in Gromov-Hausdorff sense. 
\end{remark}
We also define
$\mathcal Y^\dagger_{\kappa, n}= \{ Y\in \mathcal Y^\ninfty: Y \mbox{ satisfies $R^Y\geq -\kappa$ and $\dim_Y\leq n$}\}.$

\begin{5}\label{th:copt}
The class $\mathcal Y^\dagger_{\kappa, n}$ is precompact w.r.t.  covered $\ell$-convergence, and any limit space $Y\in \overline{\mathcal Y^\dagger_{\kappa, n}}^\ell$ satisfies $TCBB(\kappa)$. 
\end{5}
For the proof we can  argue exactly as before after recalling the following proposition in \cite{albi_lorentz}.
\begin{proposition}[{\cite[Proposition 7.1]{albi_lorentz}}]\label{prop:albi}
The warped product  $\mathring I\times_{f} M$ satisfies $R^{-\mathring I\times_f M}\geq -K$
if and only if the following two properties hold
\begin{enumerate}
\item $f''+ K f \leq 0$, 
\item $R^M\geq K_f$ where $K_f= - \inf_{ I} \left\{ Kf^2 + (f')^2\right\}.$
\end{enumerate}
\end{proposition}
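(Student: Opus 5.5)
We prove the proposition by a direct curvature computation for the smooth Lorentzian warped product $-\mathring I\times_f M$, with metric $g=-\de t^2+f^2g_M$. The tool is O'Neill's curvature formulas for warped products \cite[Prop.~7.42]{oneillsemi}, taken with base $B=(\mathring I,-\de t^2)$. In the base the gradient of $f$ is $\nabla f=-f'\partial_t$. Hence $\langle\nabla f,\nabla f\rangle=-(f')^2$ and $\mathrm{Hess}f(\partial_t,\partial_t)=f''$.

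For a unit timelike $\partial_t$ and vertical vectors $V,W,U$ the formulas become:
\begin{align*}
R(V,\partial_t)\partial_t&=\tfrac{f''}{f}\,V \ \ (\text{up to the sign convention}),\\
R(V,W)U&=R^M(V,W)U+\tfrac{(f')^2}{f^2}\big(\langle V,U\rangle W-\langle W,U\rangle V\big).
\end{align*}
Mixed terms such as $\langle R(\partial_t,W)W,V\rangle$ and $\langle R(V,W)W,\partial_t\rangle$ vanish because the base is one-dimensional. The first step is to fix the sign conventions carefully. The test is that for $f\equiv 1$ both sides reduce to the product case, and that for $f=\sin_{-K}$-type model functions one recovers the constant-curvature identities.

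\textbf{Necessity.} We evaluate the inequality $\langle R(v,w)w,v\rangle\ge -K\,Q(v,w)$ on two kinds of pure planes.
\begin{enumerate}
\item Take $v=\partial_t$ and $w=W$ vertical. Then $Q=-|W|_g^2$, and the inequality reduces to $f''+Kf\le 0$.
\item Take $v=V$ and $w=W$ vertical and $g_M$-orthonormal. Then $Q=f^4$, and the inequality reduces to $K^M(V,W)\ge -\big(Kf(t)^2+f'(t)^2\big)$.
\end{enumerate}
The second inequality must hold at every $t$ and every fiber plane. It is therefore equivalent to $R^M\ge \sup_t\{-(Kf^2+(f')^2)\}=K_f$. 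The inequalities on both sides scale identically in $|V|,|W|$, so restricting to orthonormal pairs is no loss.

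\textbf{Sufficiency.} Let $P=\mathrm{span}\{v,w\}$ be an arbitrary plane. The horizontal direction $\partial_t$ spans a line, so $P$ meets the vertical space $TM$ nontrivially. We may therefore take $w=W$ vertical.

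Next we write $v=a\partial_t+V$. Subtracting a multiple of $W$ does not change either side of the inequality, since both are determined by the bivector $v\wedge w$. After this change we may assume $g_M(V,W)=0$.

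Expanding, and using the vanishing of the mixed terms, gives
$$\langle R(v,W)W,v\rangle=a^2\langle R(\partial_t,W)W,\partial_t\rangle+\langle R(V,W)W,V\rangle,$$
together with $Q(v,W)=-a^2|W|_g^2+|V|_g^2|W|_g^2$. Condition (1) bounds the first summand by $-K\cdot(-a^2|W|_g^2)$. Condition (2), combined with the O'Neill formula, bounds the second summand by $-K|V|_g^2|W|_g^2$. Adding the two bounds gives the claim.

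\textbf{Main obstacle.} The step I expect to be delicate is the bookkeeping of signs. The bound $R\ge K$ means $K(P)\le K$ on timelike planes but $K(P)\ge K$ on spacelike planes (Remark \ref{rem:RK}), so each sign must be tracked separately. I also need to verify that no cross term survives after the reduction to $V\perp W$. Degenerate planes, where $Q=0$, are covered automatically, because the inequality is stated in the polynomial form and holds by continuity.
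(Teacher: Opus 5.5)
Your proof is correct, but it does not follow the paper: the paper gives no argument for this proposition and simply imports it from \cite[Proposition 7.1]{albi_lorentz}. You instead verify it directly from O'Neill's warped-product formulas, which is self-contained and shows exactly where the one-dimensional base is used.

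\begin{itemize}
\item By a dimension count every plane contains a vertical vector $W$.
\item After shearing $v=a\partial_t+V$ by a multiple of $W$ so that $V\perp W$, both the curvature form and $Q$ split as a $\partial_t\wedge W$ part plus a $V\wedge W$ part.
\item The two pure-plane conditions then add up to the general inequality. You identify them correctly: $f''+Kf\le 0$ from $K(\partial_t,W)=f''/f$ on timelike planes, and $K^M\ge -(Kf^2+(f')^2)$ pointwise in $t$ from $K(V,W)=(K^M+(f')^2)/f^2$ on vertical planes.
\end{itemize}

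This is the same kind of computation the paper carries out for the Ricci analogue in Proposition \ref{prop:ricciwp}. There, however, only the ``only if'' direction is done, whereas you handle both directions. The citation buys brevity; your argument buys transparency.

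Two minor corrections to your justifications, neither of which is a gap.

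First, the cross terms vanish in any warped product, not because the base is one-dimensional. In general $R(X,V)W$ is horizontal and $R(V,W)U$ is vertical for $X$ horizontal and $U,V,W$ vertical. One-dimensionality is what you use for $P\cap TM\neq 0$ and for the absence of a horizontal--horizontal contribution.

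Second, your sufficiency computation never uses non-degeneracy of the plane. The polynomial inequality therefore holds for all pairs $(v,w)$ directly, with no continuity argument needed.
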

\subsection{Tangent spaces}\label{subsec:tangentcone}
Given $Y\in \overline{ \mathcal Y^\ninfty_{NK,N+1}}^{\m \ell}$ we find $f: I\rightarrow [0, \infty)$ $\mathcal FK$-concave and $X$ that is the pointed GH limit of $X_i\in \mathcal M$ with $\ric^{X_i}\geq -(N-1)C$ for a constant $C>0$ such that $Y=-I \times_f X$.
We can rescale $Y$ with $\frac{1}{\epsilon}$ as $\frac 1 \epsilon Y= - I_{\frac 1 \epsilon} \times_{f_{\epsilon}} X_{\frac 1 \epsilon} $ where $I_{\frac 1 \epsilon} = \frac 1 \epsilon I$, $f_{\epsilon}= f(\epsilon \cdot)$ and $X_{ \frac 1 \epsilon} = \frac 1 \epsilon X$ and $\frac 1 \epsilon Y$ is still in $ \overline{ \mathcal Y^\ninfty_{NK,N+1}}^{\m \ell}$.
\begin{definition}
We call  a pointed space $(Y_\sinfty, o_\sinfty) \in \overline{ \mathcal Y^\ninfty_{NK,N+1}}^{\m \ell}$ a tangent cone of $Y$ at some point $(s,x)\in Y$ if there exists a sequence $\epsilon_i$ 
such that $\epsilon_i\downarrow 0$ and $(\frac 1 {\epsilon_i} Y, (s, x))\overset{pt\m\ell}{\rightarrow} (Y_\sinfty, o_\sinfty).$
\end{definition}
\begin{corollary}
Every tangent cone $(Y_\sinfty, o_\sinfty)$ at  $(s,x)\in Y=-I\times_f X$ is isomorphic to $(-\R\times X_\sinfty, (0, o))$ where $(X_\sinfty, o)$ is a tangent cone of $X$ at $x$. 
\end{corollary}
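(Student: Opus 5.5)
The plan is to identify the rescaled spaces $\frac 1{\epsilon_i}Y$ explicitly as generalized cones and then apply Theorem \ref{th:convergencecones}. First I recenter the base at $s$. Using Remark \ref{rem:trans_cone} (translation in the base is an isomorphism), the rescaling $\frac 1\epsilon Y$ with base point $(s,x)$ is isomorphic to
$$-\tfrac{1}{\epsilon}(I-s)\times_{f_\epsilon} \tfrac 1\epsilon X,\qquad f_\epsilon(u)=f(s+\epsilon u),$$
with base point $(0,x)$. This holds because $\epsilon^{-2}\big(-\de t^2+f^2 g_X\big)=-\de u^2+f(s+\epsilon u)^2\,\epsilon^{-2}g_X$ with $u=(t-s)/\epsilon$, and the same computation applies to the length structures $L^{\slgc}$ in the nonsmooth case. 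The intervals $\frac1{\epsilon_i}(I-s)$ exhaust $\R$. I choose the covers $I^k_i=(-k/2,k/2)$ for $i$ large, so that $I^k_i\to I^k_\sinfty=(-k/2,k/2)$ trivially. Since $f$ is continuous and positive at $s$, $f_{\epsilon_i}|_{I^k}\to f(s)$ uniformly for every $k$, so $f_{\epsilon_i}$ converges uniformly to the constant $f(s)$ subject to $\{I^k\}$.

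Second, I handle the fibers. By assumption $X$ is a pGH limit of manifolds with $\ric\geq -(N-1)C$, so $\epsilon_i^{-1}X$ is a limit of manifolds with $\ric\geq -(N-1)C\epsilon_i^2$. Gromov precompactness then gives a subsequence along which $(\epsilon_i^{-1}X,x)\to (X_\sinfty',o)$ in pGH sense, and $X_\sinfty'$ is a tangent cone of $X$ at $x$. Theorem \ref{th:convergencecones} now yields that $\frac1{\epsilon_i}Y$ converges in pointed $\m\ell$-sense to $-\R\times_{f(s)}X'_\sinfty$; the measures $f_{\epsilon_i}^n\de u\otimes \vol$ are normalized on each $\ol U^k$ in the definition of weak convergence, so constant factors are harmless. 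By the scaling remark, $-\R\times_{f(s)}X_\sinfty'\simeq -\R\times X_\sinfty$ with $X_\sinfty=f(s)X'_\sinfty$. This $X_\sinfty$ is again a tangent cone of $X$ at $x$, namely the pGH limit of $(f(s)/\epsilon_i)X$ along the rescaled sequence $\epsilon_i/f(s)\downarrow 0$.

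Third, I must conclude that the given limit $(Y_\sinfty,o_\sinfty)$ is isomorphic to this one. This uniqueness of pointed $\ell$-limits is the step I expect to be the main obstacle, since the definition of $\ell$-convergence allows the covers to be chosen freely. My approach is as follows. For two $\ell$-limits $Y_\sinfty$ and $Y'_\sinfty$ of the same (sub)sequence, compose $\delta_i$-GH isometries $\ol U^k_\sinfty\to\ol U^k_i\to \ol U'^{k'}_\sinfty$, choosing $k'$ so that the pieces are nested, and extract by Arzel\`a–Ascoli and a diagonal argument over $k$ a distance-preserving map $\Phi:Y_\sinfty\to Y'_\sinfty$, which is bijective by symmetry. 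Applying properties (1) and (2) of Definition \ref{def:uniform_convergence} in both directions gives $\ell'(\Phi x,\Phi y)=\ell(x,y)$ on $\{\ell\geq \frac 1\el\}$ for every $\el$, hence on $\{\ell>0\}$. The remaining issue is to distinguish the values $0$ and $-\infty$. Property (1) gives $\ell'(\Phi x,\Phi y)\geq 0$ whenever $\ell(x,y)\geq 0$, because $\ell_i\geq0$ along approximating pairs forces $\ell'\geq -\epsilon$, and $\ell'$ takes values in $\{-\infty\}\cup[0,\infty)$. Symmetry gives the converse. Finally, weak convergence of the normalized measures gives $\Phi_\sharp\m=\m'$ up to the normalization. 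If this general uniqueness resists, I would fall back on proving it only within the class of generalized cones, using fiber independence to reduce to the $2$-dimensional cones $-J\times_f[0,L]$ as in Proposition \ref{prop:uniform_convergence_cone}.
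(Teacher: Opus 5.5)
Your first two steps are the paper's proof. You pass to a subsequence along which $(\epsilon_i^{-1}X,x)$ converges in pGH sense (using that $X$ is a Ricci limit space), show that the rescaled warping functions converge locally uniformly to a constant, absorb that constant into the fiber, and apply Theorem \ref{th:convergencecones}. The only difference is how you get the constant. The paper argues that the limit is a positive concave function on $\mathbb{R}$ normalized to $1$ at the base point, hence $\equiv 1$. You instead recenter the base at $s$ and use only continuity of $f$ at $s$. This is more elementary, and it also fixes the paper's bookkeeping: with $f_\epsilon=f(\epsilon\,\cdot)$ the base point sits at $s/\epsilon$, not at $s$. The paper then stops and tacitly identifies the given tangent cone with the limit it has built, so your third step goes beyond it. As written, however, that step does not work.

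\textbf{The nesting claim fails.} The definition of pointed $\ell$-convergence imposes no relation between the covers of $\frac1{\epsilon_i}Y$ that produce $Y_\infty$ and those that produce $Y'_\infty$. In fact uniqueness of pointed $\ell$-limits is false in that generality. Suppose that, for each fixed $k$, the $k$-th cover member of $\frac1{\epsilon_i}Y$ is a ball about the base point whose radius tends to $0$ as $i\to\infty$ (say radius $k/i^2$ for $k<i$, and the standard piece for $k\geq i$). Then:
\begin{itemize}
\item covered GH convergence and uniform non-total imprisonment hold;
\item property (1) holds, since causal pairs in these balls have $\ell_i\to 0$;
\item property (2) is vacuous, and the normalized measures tend to a Dirac mass.
\end{itemize}
The limit is a single point.

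\textbf{How to repair it.} What rescues the corollary is the convention at the start of Section \ref{sec:preco} that cones always carry prescribed covers. Suppose the given tangent cone and your limit are both produced from the same pieces $\overline U^k_i$, e.g.\ $I^k\times B_{2^k}$ with $I^k=(-k/2,k/2)$ about the base point. Then uniqueness of GH limits of compact spaces, the base-point condition and your diagonal Arzel\`a--Ascoli argument do yield an isometry $\Phi$. Your two-sided use of properties (1) and (2) then gives $\ell'(\Phi x,\Phi y)=\ell(x,y)$ on $\{\ell>0\}$.

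\textbf{The step separating $0$ from $-\infty$ also needs repair.} Property (2) only produces approximating pairs with $\ell_i\geq 0$ when $\ell(x,y)\geq 1/l$. So for $x\neq y$ with $\ell(x,y)=0$ you have no causal approximating pairs to feed into property (1). Use push-up instead, as in the proof of Theorem \ref{th:stabilityTCBB}. Take $y'$ close to $y$ with $y\ll y'$ (pull back $(t+\delta,z)$ if $\Phi y=(t,z)$). Then $x\ll y'$, hence $\Phi x\ll \Phi y'$. Letting $y'\to y$, you conclude by closedness of the causal relation: in one direction by global hyperbolicity of $-\mathbb{R}\times X_\infty$, in the other by property (1) along a diagonal sequence.

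Your proposed fallback via fiber independence does not address the real issue, which is the freedom in the covers, not the structure of the fibers.
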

 \begin{proof}
 We observe that $(X,x)$ is the pointed GH limit  of a sequence $(X_j, x_j)\in \mathcal M$ with $\ric^{X_j}\geq -(N-1) C$. Hence, after extracting a subsequence, $(\frac 1 {\epsilon_j} X, x)$ converges in  the pointed GH sense to $(X_\sinfty, x_\sinfty)$, and $X_\sinfty$ is a tangent of $X$ at $x$.  Moreover, $f_{\epsilon_i}: \frac 1 {\epsilon_i} I\rightarrow (0, \infty)$ converges locally uniformly to $f_\sinfty: \R\rightarrow (0, \infty)$ that is concave. We normalize $f_{\epsilon_i}$ and $\frac 1 {\epsilon_i} X$ as in the beginning of the proof of Theorem \ref{th:5}, i.e. such that $f_{\epsilon_i}(s)=1$. It follows that $f_\sinfty(s)=1$ and consequently $f_\sinfty\equiv 1$.  We can apply Theorem \ref{th:convergencecones}. Therefore $\frac 1 {\epsilon_i} Y$ converges to $-\R\times X_\sinfty$  in pointed $\ell$-sense. 
 \end{proof}

\subsection{Almost rigidity in the class of generalized cones} \label{subsec:almost}
\begin{corollary}
We assume that $\epsilon_i\downarrow 0$ and $L_i\uparrow \infty$. If  $Y_i\in \mathcal Y^\ninfty_{-\epsilon_i^2(N-1), N}$ and there exists a timelike maximal geodesic of $\tau$-length bigger than $L_i$ in $Y_i$
then  $Y_i$ subconverges in measured $\ell$-sense to $Y_\sinfty$   such that $Y_\sinfty= -\R\times X$ for some $\RCD(0,N-1)$ space $X$.
\end{corollary}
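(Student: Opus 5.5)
We combine the precompactness theorem for $\mathcal Y^\ninfty_{K,N}$ with a rigidity argument for the limit warping function. Write $Y_i=-I_i\times_{f_i} M_i$ with $n=N-1=\dim M_i$. Rescaling as in the proof of Theorem \ref{th:4}, we replace the bound $-\epsilon_i^2(N-1)$ by $n\cdot(-\epsilon_i^2)$. Proposition \ref{prop:ricciwp} then gives that $f_i$ is $\mathcal F(-\epsilon_i^2)$-concave, i.e. $f_i''\le \epsilon_i^2 f_i$, and that $\ric^{M_i}\ge (n-1)K_{f_i}$ with $K_{f_i}=-\inf_{I_i}\{-\epsilon_i^2 f_i^2+(f_i')^2\}$.

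Since $\gamma_i=(\alpha_i,\beta_i)$ is a timelike maximal geodesic with $\tau$-length bigger than $L_i$, we have $\diam I_i\ge L(\alpha_i)>L_i\to\infty$. We then choose base points $\bar t_i\in I_i$ so that both $\bar t_i-\inf I_i\to\infty$ and $\sup I_i-\bar t_i\to\infty$. If this fails on one side, we plan to use the midpoint of $\alpha_i$ and the fact that the geodesic projects into the base. We translate so that $\bar t_i=0$ and normalize $f_i(0)=1$, as in the proof of Theorem \ref{th:4}. Any limit interval is then $I_\sinfty=\R$.

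Next we control $f_i$. The Riccati estimate from the proof of Theorem \ref{th:4}, with $K=-\epsilon_i^2$, gives on every fixed compact $[-R,R]$ the bound $|(\log f_i)'|\le \max\{\coth_{-\epsilon_i^2}(t-a_i),\coth_{-\epsilon_i^2}(b_i-t)\}$. Here $\coth_{-\epsilon^2}(s)=\epsilon\coth(\epsilon s)$, and this tends to $0$ uniformly for $s\ge$ the distance to $\partial I_i$, which tends to infinity, provided $\epsilon_i\to0$ and that distance goes to $\infty$. Hence $\log f_i\to$ const $=0$ locally uniformly, so $f_\sinfty\equiv 1$.

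It remains to show that $K_{f_i}$ tends to $0$. We have $-\epsilon_i^2 f_i^2+(f_i')^2\ge-\epsilon_i^2 f_i^2$, but we need a lower bound for $\ric^{M_i}$, i.e. an upper bound for $K_{f_i}$, that is an upper bound on $-\inf$. The infimum of $-\epsilon_i^2f_i^2+(f_i')^2$ taken over $[-R,R]$ is at most its value at $0$, which is $-\epsilon_i^2+f_i'(0)^2\to0$. Therefore $-\inf_{I_i}\ge -(\text{value at }0)\to 0$, which is only a lower bound on $K_{f_i}$; this is the wrong direction. The main obstacle is therefore to obtain $\limsup K_{f_i}\le 0$. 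I would try to use that $\mathcal F(-\epsilon_i^2)$-concavity makes $E_i=(f_i')^2-\epsilon_i^2 f_i^2$ monotone. Its derivative is $2f_i'(f_i''-\epsilon_i^2f_i)$, so it is nonincreasing where $f_i'>0$ and nondecreasing where $f_i'<0$. This forces the infimum of $E_i$ to occur at a critical point of $f_i$ or at the ends of $I_i$. If $f_i$ stays positive on long intervals with small slope, this would give $\inf E_i\ge -\epsilon_i^2\sup f_i^2$. A bound $\epsilon_i\sup f_i\to0$ then needs the concavity of $\log f_i$ up to $\epsilon_i^2$, and I expect this to be the delicate part, possibly handled by the local version of the curvature bound.

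Granting $\ric^{M_i}\ge-(n-1)\delta_i$ with $\delta_i\to0$, Gromov precompactness lets the pointed fibers subconverge to some $X$. This $X$ is a Ricci limit space with nonnegative Ricci curvature, hence an $\RCD(0,n)=\RCD(0,N-1)$ space. Theorem \ref{th:convergencecones} then gives $Y_i\to-\R\times_1X=-\R\times X$ in the measured $\ell$-sense, and the measures $f_i^n\de t\otimes\vol$ converge because $f_i\to1$ locally uniformly.
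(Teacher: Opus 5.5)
Your argument works, and the step you flag as the main obstacle is not one: you have the direction reversed. Proposition \ref{prop:ricciwp} gives $\ric^{M_i}\ge (n-1)K_{f_i}$. So a lower bound for $\ric^{M_i}$ needs a \emph{lower} bound for $K_{f_i}$, and that is exactly the inequality you discarded.

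To close the step, take $\bar t_i=0$ to be the midpoint of the base projection of $\gamma_i$. Both ends of $I_i$ are then at distance $>L_i/2$, since the $\tau$-length of $\gamma_i$ is at most $\alpha_i(b)-\alpha_i(a)$. Normalize $f_i(0)=1$ and put $u_i=\log f_i$. Then $K_{f_i}\ge \epsilon_i^2-u_i'(0)^2$, and your Riccati bound gives $|u_i'(0)|\le\epsilon_i\coth(\epsilon_iL_i/2)$. Hence
\[
\ric^{M_i}\ \ge\ -(n-1)\,\frac{\epsilon_i^2}{\sinh^2(\epsilon_iL_i/2)}\ \ge\ -\frac{4(n-1)}{L_i^2}\ \longrightarrow\ 0 ,
\]
which is precisely the bound you ``grant''.

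Conversely, an upper bound on $K_{f_i}$ says nothing about $\ric^{M_i}$, and it fails in general. If $f_i$ becomes large far from $\bar t_i$ with small slope, then $K_{f_i}\ge\epsilon_i^2f_i^2-(f_i')^2$ is large, and the fibers are forced to be small spheres that collapse. So drop the monotonicity of $E_i$ and the bound on $\epsilon_i\sup f_i$ altogether.

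One addition is needed for the measured statement: the normalized fiber volumes must subconverge (Gromov/Cheeger--Colding). The convergence $f_i\to1$ alone is not enough. With this, your proof is complete.

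Your route differs from the paper's. The paper obtains the limit from the precompactness Theorem \ref{th:4}. It then takes a maximal timelike geodesic of infinite $\tau$-length in the limit and argues that the positive, concave limit warping function must be constant along it.

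That is shorter and reuses the stability machinery, but it tacitly needs the limit geodesic to be infinite in \emph{both} directions, which is exactly your centering of $\bar t_i$. If you normalize near an end instead, pieces of rescaled de Sitter space $-(0,2L_i)\times_{\sinh(\epsilon_i t)/\epsilon_i}\mathbb H^n$ converge to the light cone $-(0,\infty)\times_t\mathbb H^n$, which is not a product. The paper also leaves implicit why the limit fiber has nonnegative Ricci curvature.

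Your direct route supplies both points with explicit rates: the Riccati estimate $|(\log f_i)'(t)|\le\epsilon_i+1/\mathrm{dist}(t,\partial I_i)$ and the fiber bound above. It identifies $X$ as a Ricci limit with $\ric\ge 0$, hence $\RCD(0,N-1)$, without passing through the $\TCD$ condition of the limit.
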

\begin{proof}
Indeed, we find that, after extracting a subsequence $Y_i$ converges in measured $\ell$-sense to $Y_\sinfty\in \overline{ \mathcal Y^\ninfty_{0,N}}^{\m \ell} $, i.e.  $Y_\sinfty$ satisfies the condition $\TCD(0,N)$ and such that there exists a maximal $\tau$-geodesic $\gamma=(\alpha,\beta)$ of infinite $\tau$-length. Since $f$ is concave along $\gamma$, $f$ must be constant along $\gamma$. Hence, $f\circ \alpha\equiv c$ is constant. Therefore, $Y_\sinfty=\R\times c X$ for a  $\RCD(0, N)$ space $X$. 
\end{proof}
\noindent
{\bf Data availability:} Data sharing not applicable to this article as no datasets
were generated or analysed during the current study.
\medskip

\noindent
{\bf Conﬂict of interest:} The author has no conﬂict of interest.

\bibliography{new} 
\bibliographystyle{plain}
\end{document}